\providecommand{\tabularnewline}{\\}
\numberwithin{equation}{section}
\numberwithin{figure}{section}
  \theoremstyle{plain}
  \newtheorem{prop}{\protect\propositionname}
  \theoremstyle{remark}
  \newtheorem{rem}{\protect\remarkname}
\theoremstyle{plain}
\newtheorem{thm}{\protect\theoremname}
  \theoremstyle{plain}
  \newtheorem{lem}{\protect\lemmaname}
  \theoremstyle{plain}
  \newtheorem{cor}{\protect\corollaryname}
  \theoremstyle{remark}
  \newtheorem{claim}{\protect\claimname}
  \theoremstyle{plain}
  \newtheorem*{thm*}{\protect\theoremname}
  \providecommand{\claimname}{Claim}
  \providecommand{\lemmaname}{Lemma}
  \providecommand{\propositionname}{Proposition}
  \providecommand{\remarkname}{Remark}
  \providecommand{\theoremname}{Theorem}
\providecommand{\corollaryname}{Corollary}
\providecommand{\theoremname}{Theorem}
\begin{document}
\makeRR   
\newpage
\tableofcontents
\newpage

\section{\label{sec:intro}Introduction}

Many stationary physical problems are formulated in terms of reconstruction
of function in a planar domain from partially available measurements
on its boundary. 
Such problems may arise from 3-dimensional settings for which symmetry properties allow reformulation of
the model in dimension 2, as it is often the case in Maxwell's equations for electro- or magnetostatics.
It is a classical problem to find the function in
the domain given values of the function itself (Dirichlet problem)
or its normal derivative (Neumann problem) on the whole boundary.
However, if the knowledge of Dirichlet or Neumann data is available
only on a part of the boundary, the recovery problem is underdetermined and
one needs to impose both conditions on the measurements on the accessible
part of the boundary. One example of such a problem is recovering an electrostatic potential $u$ satisfying conductivity
equation in $\Omega\subset\mathbb{R}^{2}$ from knowledge of its values
and those of the normal current $\sigma\partial_{n}u$ on a subset $\Gamma$
of the boundary $\partial\Omega$ for a given conductivity coefficient
$\sigma$ in $\bar{\Omega}$:
\[ 
\begin{cases}
\nabla\cdot \left(\sigma\nabla u\right)=0 & \hspace{1em}\text{in}\quad\Omega,\\
u=u_{0},\quad\sigma\partial_{n}u=w_{0} & \hspace{1em}\text{on}\quad\Gamma.
\end{cases} 
\] 
Often, only the behavior of the solution on $\partial\Omega\backslash\Gamma$ is of practical interest or, in case of a free boundary problem, one aims to find a position of this part of the boundary by imposing an additional condition there \cite{Alessandrini}.

In the present work, we consider the prototypical case where the domain is the unit
disk $\Omega=\mathbb{D}$, the conductivity coefficient is constant
$\sigma\equiv1$, and we assume appropriate regularity of the boundary data on $\Gamma \subset \mathbb{T}$ required for the existence of a unique weak $W^{1, 2}\left(\Omega\right)$ solution:
\begin{equation}
\begin{cases}
\Delta  u=0 & \hspace{1em}\text{in}\quad\Omega,\\
u=u_{0},\quad \partial_{n}u=w_{0} & \hspace{1em}\text{on}\quad\Gamma\hspace{1em}\text{with}\quad u_{0}\in W^{1/2, 2}\left(\Gamma\right),\hspace{0.5em} w_{0}\in L^{2}\left(\Gamma\right).
\end{cases}\label{eq:u_probl}
\end{equation}
Despite these simplifications, we note that since the Laplace operator is
invariant under conformal transformations \cite{Ablowitz}, 
results obtained for the unit disk can be readily extended to more general simply connected
domains with smooth boundary. This is also true for non-constant conductivities $\sigma$  since the same conformal invariance
holds \cite{Rigat-Russ}.
For non-constant conductivity equation,
such a problem without additional pointwise data has been considered within the
 framework of generalized analytic functions \cite{Fischer-Leblond-Partington} and beyond simply connected domains. In particular, the case of annular domain finds its application in a problem of recovery of plasma boundary in tokamaks \cite{Fischer}. 

The problem (\ref{eq:u_probl}) happens to be ill-posed
\cite{Lavrentiev} as might be anticipated if one recalls the celebrated Hadamard's example which demonstrates lack of continuous dependence of solution on the boundary data if the boundary conditions assumed only on a part of the boundary. In the present formulation, as we are going to see, the both boundary conditions on $\Gamma$ cannot be completely arbitrary functions as well: certain compatibility is needed in order for the solution
to exist and be finite on $\partial\Omega\backslash\Gamma$.
Therefore, one would like to find the admissible solution (that is
bounded or, even more, sufficiently close to some \textit{a priori}
known data on $\partial\Omega\backslash\Gamma$) which is in best
agreement with the given data $u_{0}$, $w_{0}$ on $\Gamma$. Put this way, the
recovery issue is approximately recast as a well-posed constrained optimization problem, as we will show further.

In our approach, we use complex analytic tools to devise solution
of the problem. Recall that if a function $g=u+iv$ is analytic (holomorphic),
then $u$ and $v$ are real-valued harmonic functions satisfying the Cauchy-Riemann
equations $\partial_{n}u=\partial_{t}v$,
$\partial_{t}u=-\partial_{n}v$, where the partial derivatives are taken with respect to polar coordinates  \cite{Nehari}. 
Applied to problem (\ref{eq:u_probl}), the first equation suggests that knowing $w_{0}$,
one can, up to an additive constant, recover $v$ on $\Gamma$, and therefore both
$u_{0}$ and $w_{0}$ define the trace on $\Gamma$ of the analytic
function $g$ in $\Omega$. However, the knowledge of an analytic
function on a subset $\Gamma\subset\mathbb{T}$ of a positive measure
defines the analytic function in the whole unit disk $\mathbb{D}$ \cite{Goluzin,Patil}
and also its values on the unit circle $\mathbb{T}$ \cite{Hoffman}. Of course, available data $u_{0}$, $w_{0}$ on $\Gamma$
may not be compatible to yield the restriction of an analytic function
onto $\Gamma$, and such instability phenomenon illustrates ill-posedness of
the problem from the viewpoint of complex analysis.

As already mentioned, (\ref{eq:u_probl}) 
may be recast as a well-posed bounded extremal
problem in normed Hardy spaces of holomorphic functions defined by their boundary values.
Different aspects of such problems were extensively considered
in \cite{Alpay,Baratchart-Leblond,Baratchart-Leblond-Partington}
and an algorithm for computation of the solution was proposed.
In the present work, we would like to solve such an optimization problem
incorporating additional available information from inside the domain.
Taking advantage of the complex analytic approach allowing to make sense of pointwise values (unlike
in Lebesgue spaces), we would like to extend previously obtained results
to a situation where the solution needs to meet prescribed values at some points inside
the disk. We characterize the solution in a way suitable for
further practical implementation, obtain estimates of approximation
rate and discrepancy growth, investigate the question of stability,
illustrate numerically certain technical aspects,
discuss the choice of auxiliary parameters and, based on a newly developed method of obtaining estimates on solution (which also applies to the problem without pointwise constraints), propose an improvement of the computational algorithm for solving the problem.

The paper is organized as follows. Section \ref{sec:intro_Hardy}
provides an introduction to the theory of Hardy spaces which are essential
functional spaces in the present approach. In Section \ref{sec:probl_sol},
we formulate the problem, prove existence of a unique solution and
give its useful characterization. Section \ref{sec:psi_choice} discusses
the choice of interpolation function which is a technical tool to
prescribe desired values inside the domain; we also provide an alternative
form of the solution that turns out to be useful later. In Section
\ref{sec:comp_issues}, we obtain specific balance relations governing
approximation rate on a given subset of the circle and discrepancy
on its complement, which shed light on the quality of the solution
depending on a choice of some auxiliary parameters. Also, at this point we introduce
a novel series expansion method of evaluation of quantities governing solution quality.
Section \ref{sec:compan_pb} introduces a closely related problem whose solution might be computationally
cheaper in certain cases. We further look into sensitivity of the solution to perturbations
of all input data in Section \ref{sec:stab_res} raising the stability
issue and providing technical estimates. We conclude with Section
\ref{sec:num_res} by presenting numerical illustrations of certain properties
of the solution, a short discussion of the choice of technical
parameters and suggestion of a new efficient computational algorithm based on
the results of the Section \ref{sec:comp_issues}. Some concluding remarks are given in Section \ref{sec:conclu}.

\newpage
\section{\label{sec:intro_Hardy}Background in theory of Hardy spaces}

Let $\mathbb{D}$ be the open unit disk in $\mathbb{C}$ with 
boundary $\mathbb{T}$.

Hardy spaces $H^{p}\left(\mathbb{D}\right)$ can be defined as classes
of holomorphic functions on the disk with finite norms
\[
\left\Vert F\right\Vert _{H^{p}}=\sup_{0<r<1}\left(\dfrac{1}{2\pi}\int_{0}^{2\pi}\left|F\left(re^{i\theta}\right)\right|^{p}d\theta\right)^{1/p},\hspace{1em}1\leq p<\infty,
\]
\[
\left\Vert F\right\Vert _{H^{\infty}}=\sup_{\left|z\right|<1}\left|F\left(z\right)\right|.
\]
These are Banach spaces that enjoy plenty of interesting properties
and they have been studied in detail over the years \cite{Duren,Garnett,Hoffman,Rudin}.
In this section we give a brief introduction into the topic, yet trying
to be as much self-contained as possible, adapting general material
to our particular needs.

The key property of functions in Hardy spaces is their behavior on the boundary $\mathbb{T}$ of the disk. More precisely,
boundary values of functions belonging to the Hardy space $H^{p}$
are well-defined in the $L^{p}$ sense
\begin{equation}
\lim_{r\nearrow1}\left\Vert F\left(re^{i\theta}\right)-F\left(e^{i\theta}\right)\right\Vert _{L^{p}\left(\mathbb{T}\right)}=0,\hspace{1em}1\leq p<\infty,\label{eq:rad_limit1}
\end{equation}
as well as pointwise, for almost every $\theta \in [0, 2 \pi]$:
\begin{equation}
\lim_{r\nearrow1}F\left(re^{i\theta}\right)=F\left(e^{i\theta}\right).\label{eq:rad_limit2}
\end{equation}
It is the content of the Fatou's theorem (see, for instance, \cite{Hoffman})
that the latter limit exists almost everywhere not only radially but
also along any non-tangential path. Thanks to the Parseval's identity,
the proof of (\ref{eq:rad_limit1}) is especially simple when $p=2$
(see \cite[Th. 1.1.10]{Martinez}), the case that we will work
with presently.

Given a boundary function $f\in L^{p}\left(\mathbb{T}\right)$,
$1\leq p\leq\infty$ whose Fourier coefficients of negative index
vanish 
\begin{equation}
f_{-n}:=\frac{1}{2\pi}\int_{0}^{2\pi}f\left(e^{i\theta}\right)e^{in\theta}d\theta=0,\hspace{1em}n=1,\,2,\,\dots,\label{eq:neg_Fourier_coeff}
\end{equation}
(in this case, we say $f\in H^{p}\left(\mathbb{T}\right)$), there
exists $F\in H^{p}\left(\mathbb{D}\right)$ such that $F\left(re^{i\theta}\right)\rightarrow f\left(e^{i\theta}\right)$
in $L^{p}$ as $r\nearrow1$, and it is defined by the Poisson representation
formula, for $re^{i\theta} \in \mathbb{D}$:
\begin{equation}
F\left(re^{i\theta}\right)=\frac{1}{2\pi}\int_{0}^{2\pi}f\left(e^{it}\right)P_{r}\left(\theta-t\right)dt,\label{eq:Poisson_repr}
\end{equation}
where we employed the Poisson kernel of $\mathbb{D}$:
\[
P_{r}\left(\theta\right):=\frac{1-r^{2}}{1-2r\cos\theta+r^{2}}=\sum_{k=-\infty}^{\infty}r^{\left|k\right|}e^{ik\theta} \, , \ 0 < r < 1 \, , \ \ \theta \in [0, 2 \pi] \, .
\]
Note that the vanishing  condition for the Fourier coefficients of negative order
is  equivalent to the requirement of the Poisson integral (\ref{eq:Poisson_repr})
to be analytic in $\mathbb{D}$. Indeed, since $f\left(e^{i\theta}\right)=\underset{n=-\infty}{\overset{\infty}{\sum}}f_{n}e^{in\theta}$,
the right-hand side of (\ref{eq:Poisson_repr}) reads 
\begin{eqnarray*}
\frac{1}{2\pi}\int_{0}^{2\pi}f\left(e^{it}\right)P_{r}\left(\theta-t\right)dt & = & \frac{1}{2\pi}\sum_{k=-\infty}^{\infty}r^{\left|k\right|}e^{ik\theta}\sum_{n=-\infty}^{\infty}f_{n}\int_{0}^{2\pi}e^{i\left(n-k\right)t}dt=\sum_{n=-\infty}^{\infty}f_{n}r^{\left|n\right|}e^{in\theta}\\
 & = & f_{0}+\sum_{n=1}^{\infty}\left(f_{n}z^{n}+f_{-n}\bar{z}^{n}\right),
\end{eqnarray*}
and hence, if we want this to define a holomorphic function through (\ref{eq:Poisson_repr}), we have
to impose condition (\ref{eq:neg_Fourier_coeff}).

Because of the established isomorphism, we can identify the space
$H^{p}=H^{p}\left(\mathbb{D}\right)$ with $H^{p}\left(\mathbb{T}\right)\subset L^{p}\left(\mathbb{T}\right)$
for $p\geq1$ (the case $p=1$ requires more sophisticated reasoning
invoking F. \& M. Riesz theorem \cite{Hoffman}). It follows that
$H^{p}$ is a Banach space (as a closed subspace
of $L^{p}\left(\mathbb{T}\right)$ which is complete), and we have
inclusions due to properties of Lebesgue spaces on bounded domains
\begin{equation}
H^{\infty}\subseteq H^{s}\subseteq H^{p},\hspace{1em}s\geq p\geq1.\label{eq:Hp_incl}
\end{equation}

Summing up, we can abuse notation employing only one letter $f$,
and write 
\begin{equation}
\left\Vert f\right\Vert _{H^{p}}=\left\Vert f\right\Vert _{L^{p}\left(\mathbb{T}\right)}\label{eq:Hp_Lp}
\end{equation}
whenever $f\in L^{p}\left(\mathbb{T}\right)$, $p\geq1$, satisfies
(\ref{eq:neg_Fourier_coeff}).

Moreover, in case $p=2$, which we will focus on, the Parseval's identity
provides an isometry between the Hardy space $H^{2}=H^{2}\left(\mathbb{D}\right)$ and the space $l_{2}\left(\mathbb{N}_{0}\right)$ of
square-summable sequences \footnote{Here and onwards, we stick to the convention: $\mathbb{N}_{0}:=\left\{ 0, 1, 2, \dots\right\} $, $\mathbb{N}_{+}:=\left\{ 1, 2, 3, \dots\right\} $.}. 
Hence, $H^{2}$ is a Hilbert space with the
inner product 
\begin{equation}
\left\langle f,g\right\rangle _{L^{2}\left(\mathbb{T}\right)}=\frac{1}{2\pi}\int_{0}^{2\pi}f\left(e^{i\theta}\right) \, \overline{g\left(e^{i\theta}\right)} \, d\theta=\sum_{k=0}^{\infty}f_{k}\bar{g}_{k}.\label{eq:Hp_innerprod}
\end{equation}

We will also repeatedly make use of the fact that $H^{\infty}$ functions
act as multipliers in $H^{p}$, that is, $H^{\infty}\cdot H^{p}\subset H^{p}$.\\
There is another useful property of Hardy classes to perform
factorization: if $f\in H^{p}$ and $f\left(z_{j}\right)=0$,
$z_{j}\in\mathbb{D}$, $j=1,\dots,N$, then $f=bg$ with $g\in H^{p}$
and the finite Blaschke product $b\in H^{\infty}$
defined as 
\begin{equation}
b\left(z\right)=e^{i\phi_{0}}\prod_{j=1}^{N}\left(\dfrac{z-z_{j}}{1-\bar{z}_{j}z}\right)\label{eq:Blaschke_prod}
\end{equation}
for some constant $\phi_{0}\in\left[0,2\pi\right]$. Possibility of such
factorization comes from the observation that each factor of $b\left(z\right)$
is analytic in $\mathbb{D}$ and automorphic since 
\[
\left|z\right|^{2}+\left|z_{j}\right|^{2}-\left|z\right|^{2}\left|z_{j}\right|^{2}=\left|z\right|^{2}\left(1-\left|z_{j}\right|^{2}/2\right)+\left|z_{j}\right|^{2}\left(1-\left|z\right|^{2}/2\right)\leq1,
\]
and thus 
\[
\left|\dfrac{z-z_{j}}{1-\bar{z}_{j}z}\right|^{2}=\frac{1-2\text{Re}\left(\bar{z}_{j}z\right)+\left|z\right|^{2}+\left|z_{j}\right|^{2}-1}{1-2\text{Re}\left(\bar{z}_{j}z\right)+\left|z\right|^{2}\left|z_{j}\right|^{2}}\leq1.
\]
Additionally, this shows that
\begin{equation}
\left|b\right|\equiv1,\hspace{1em}z\in\mathbb{T},\label{eq:Blaschke_circle}
\end{equation}
and hence $\left\Vert b\right\Vert _{H^{\infty}}=1$.

We let $\bar{H}_{0}^{2}$ denote the orthogonal complement of $H^{2}$ in $L^{2}\left(\mathbb{T}\right)$,
so that $L^{2}=H^{2}\oplus\bar{H}_{0}^{2}$. Recalling characterization (\ref{eq:neg_Fourier_coeff})
of $H^{2}$ functions, we can view
$\bar{H}_{0}^{2}$ as the space of functions whose expansions have
non-vanishing Fourier coefficients of only negative index, and hence
it characterizes $L^2\left(\mathbb{T}\right)$ functions which are holomorphic in $\mathbb{C}\backslash\bar{\mathbb{D}}$
and decay to zero at infinity.\\
Similarly, we can introduce the orthogonal complement to $bH^{2}$
in $L^{2}\left(\mathbb{T}\right)$ with $b$ as in (\ref{eq:Blaschke_prod}) so that $L^{2}=bH^{2}\oplus\left(bH^{2}\right)^{\perp}$
which in its turn decomposes into a direct sum as $\left(bH^{2}\right)^{\perp}=\bar{H}_{0}^{2}\oplus\left(bH^{2}\right)^{\perp_{H^{2}}}$
with $\left(bH^{2}\right)^{\perp_{H^{2}}}\subset H^{2}$ denoting
the orthogonal complement to $bH^{2}$ in $H^{2}$; it is not empty if $b\not\equiv\text{const}$, 
whence the proper inclusion $bH^{2}\subset H^{2}$ holds. Moreover, making use of the Cauchy integral formula,
it is not difficult to show that 
\[
\left(bH^{2}\right)^{\perp_{H^{2}}}:=\left(bH^{2}\right)^{\perp}\ominus\bar{H}_{0}^{2}=\dfrac{P_{N-1}\left(z\right)}{\prod_{j=1}^{N}\left(1-\bar{z}_{j}z\right)} \, ,
\]
where $P_{N-1}\left(z\right)$ is the space of polynomials of degree
at most $N-1$ in $z$.\\

Given $J\subset\mathbb{T}$, let us introduce the Toeplitz operator $\phi$ with symbol $\chi_{J}$ (the indicator function of $J$), defined by:
\begin{eqnarray}
H^{2} & \rightarrow & H^{2}\nonumber \\
F & \mapsto & \phi\left(F\right)=P_{+}\left(\chi_{J}F\right),\label{eq:Toepl_op}
\end{eqnarray}
where we let $P_{+}$ denote
the orthogonal projection from $L^{2}\left(\mathbb{T}\right)$ onto
$H^{2}$ (that might be realized by setting
Fourier coefficients of negative index to zero or convolving the function
with the Cauchy kernel). Similarly, $P_{-}:=I-P_{+}$ defines the
orthogonal projection onto $\bar{H}_{0}^{2}$.\\
We also notice that the map $L^{2}\left(\mathbb{T}\right)\rightarrow bH^{2}:\, F\mapsto bP_{+}\left(\bar{b}F\right)$
is the orthogonal projection onto $bH^{2}$. Indeed, taking into account
(\ref{eq:Blaschke_circle}), for any $u\in L^{2}\left(\mathbb{T}\right)$,
$v\in H^{2}$,
\[
\left\langle u-bP_{+}\left(\bar{b}u\right),bv\right\rangle _{L^{2}\left(\mathbb{T}\right)}=\left\langle u,bv\right\rangle _{L^{2}\left(\mathbb{T}\right)}-\left\langle P_{+}\left(\bar{b}u\right),\bar{b}bv\right\rangle _{L^{2}\left(\mathbb{T}\right)}=0.
\]

Any function in $H^{p}$, $p\geq1$,
being analytic and sufficiently regular on $\mathbb{T}$, admits integral representation in terms of its boundary
values and thus is uniquely determined by means of the Cauchy formula.
However, it is also possible to recover a function $f$ holomorphic
in $\mathbb{D}$ from its values on a subset of the boundary $I\subset\mathbb{T}$
using so-called Carleman's formulas \cite{Aizenberg,Goluzin}. Write
$\mathbb{T}=I\cup J$ with $I$ and $J$ being Lebesgue measurable
sets.
\begin{prop}
\label{prop:Carleman}Assume $\left|I\right|>0$ and let $\Phi\in H^{\infty}$
be any function such that $\left|\Phi\right|>1$ in $\mathbb{D}$
and $\left|\Phi\right|=1$ on $J$. Then, $f\in H^{p}$, $p\geq1$
can be represented from $\left.f\right|_{I}$ as 
\begin{equation}
f\left(z\right)=\frac{1}{2\pi i}\lim_{\alpha\rightarrow\infty}\int_{I}\frac{f\left(\xi\right)}{\xi-z}\left[\frac{\Phi\left(\xi\right)}{\Phi\left(z\right)}\right]^{\alpha}d\xi,\label{eq:Carleman}
\end{equation}
where the convergence is uniform on compact subsets of $\mathbb{D}$.
\end{prop}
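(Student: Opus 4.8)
The plan is to start from the ordinary Cauchy integral formula on the full circle and use the factor $[\Phi(\xi)/\Phi(z)]^{\alpha}$ to kill the contribution of the arc $J$ in the limit $\alpha\to\infty$. Fix $z\in\mathbb{D}$ and a compact set $K\subset\mathbb{D}$ containing $z$. For $f\in H^p$ with $p\geq 1$, $f$ is representable by its Cauchy integral over $\mathbb{T}$:
\[
f(z)=\frac{1}{2\pi i}\int_{\mathbb{T}}\frac{f(\xi)}{\xi-z}\,d\xi .
\]
Since $\Phi\in H^{\infty}$ has $|\Phi|=1$ on $J$ and $\Phi^\alpha\in H^\infty$ for every positive integer $\alpha$, the function $\xi\mapsto \Phi(\xi)^{\alpha}f(\xi)/(\xi-z)$ differs from $f(\xi)/(\xi-z)$ by a holomorphic factor, and more importantly the Cauchy formula applies to $\Phi^{\alpha} f\in H^p$ as well. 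Thus
\[
\Phi(z)^{\alpha}f(z)=\frac{1}{2\pi i}\int_{\mathbb{T}}\frac{\Phi(\xi)^{\alpha}f(\xi)}{\xi-z}\,d\xi,
\]
and dividing by $\Phi(z)^{\alpha}$ (legitimate since $|\Phi(z)|>1$, so $\Phi(z)\neq 0$) gives
\[
f(z)=\frac{1}{2\pi i}\int_{I}\frac{f(\xi)}{\xi-z}\Bigl[\frac{\Phi(\xi)}{\Phi(z)}\Bigr]^{\alpha}d\xi
+\frac{1}{2\pi i}\int_{J}\frac{f(\xi)}{\xi-z}\Bigl[\frac{\Phi(\xi)}{\Phi(z)}\Bigr]^{\alpha}d\xi .
\]

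It remains to show the integral over $J$ tends to $0$ uniformly for $z\in K$. On $J$ we have $|\Phi(\xi)|=1$, while $|\Phi(z)|\geq 1+\delta_K>1$ for $z\in K$ by continuity of $|\Phi|$ and compactness, since $|\Phi|>1$ throughout $\mathbb{D}$. Hence $|\Phi(\xi)/\Phi(z)|^{\alpha}\leq (1+\delta_K)^{-\alpha}\to 0$. Combining this with $|\xi-z|\geq \mathrm{dist}(K,\mathbb{T})>0$ and the bound $\int_J |f|\,|d\xi|\leq \|f\|_{L^1(\mathbb{T})}<\infty$ (finite because $H^p\subset H^1$ for $p\geq 1$), the Cauchy–Schwarz / Hölder estimate
\[
\Bigl|\frac{1}{2\pi i}\int_{J}\frac{f(\xi)}{\xi-z}\Bigl[\frac{\Phi(\xi)}{\Phi(z)}\Bigr]^{\alpha}d\xi\Bigr|
\leq \frac{(1+\delta_K)^{-\alpha}}{2\pi\,\mathrm{dist}(K,\mathbb{T})}\,\|f\|_{L^1(\mathbb{T})}
\]
shows this term vanishes uniformly on $K$ as $\alpha\to\infty$. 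Passing to the limit yields \eqref{eq:Carleman} with uniform convergence on $K$, hence on all compact subsets of $\mathbb{D}$.

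The one delicate point — and the step I would treat most carefully — is the justification that the Cauchy formula may be applied to $\Phi^{\alpha}f$ and, relatedly, that the boundary integrals are literally equal to the values of the holomorphic functions inside: for general $f\in H^p$ with $p\geq 1$ one should invoke the boundary-value identification $H^p(\mathbb{D})\cong H^p(\mathbb{T})$ recalled above (with the F.\ \& M.\ Riesz theorem underpinning the case $p=1$), so that $\Phi^{\alpha}f\in H^1(\mathbb{T})$ and its Cauchy integral reproduces it in $\mathbb{D}$. If one prefers to avoid the $p=1$ subtleties, an alternative is to first prove the formula for $f$ in a dense subclass (e.g.\ functions holomorphic across $\mathbb{T}$, or using the dilations $f_r(\xi)=f(r\xi)$ and passing to the limit $r\nearrow 1$ via \eqref{eq:rad_limit1}), then extend by the continuity estimate just displayed, which is uniform in $f$ on bounded subsets of $H^1$. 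Either route is routine; everything else is the elementary estimate on $J$.
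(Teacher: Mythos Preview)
Your proof is correct and follows essentially the same approach as the paper's: apply the Cauchy formula to $\Phi^{\alpha}f\in H^1$, divide by $\Phi(z)^{\alpha}$, and show the $J$-integral vanishes because $|\Phi(\xi)/\Phi(z)|<1$ there. You supply more detail than the paper on the uniform-on-compacta estimate and on why the Cauchy formula applies to $H^1$ boundary values; the only minor discrepancy is that you take $\alpha$ through positive integers while the paper allows real $\alpha>0$ via $\Phi^{\alpha}=\exp(\alpha\log\Phi)$, which is immaterial here but matters later in the paper when a specific non-integer $\alpha$ is used.
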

\begin{proof}
Since $\Phi\in H^{\infty}$ and $f\in H^{p}\subseteq H^{1}$,
it is clear that $f\left(z\right)\left[\Phi\left(z\right)\right]^{\alpha}\in H^{1}$,
and so the Cauchy formula applies to $f\left(z\right)\left[\Phi\left(z\right)\right]^{\alpha}=f\left(z\right)\exp\left[\alpha\log\Phi\left(z\right)\right]$
for any $\alpha>0$ 
\[
f\left(z\right)\left[\Phi\left(z\right)\right]^{\alpha}=\frac{1}{2\pi i}\int_{\mathbb{T}}\frac{f\left(\xi\right)\left[\Phi\left(\xi\right)\right]^{\alpha}}{\xi-z}d\xi
\]
\[
\Rightarrow\hspace{1em}f\left(z\right)=\frac{1}{2\pi i}\left(\int_{I}+\int_{J}\right)\frac{f\left(\xi\right)}{\xi-z}\left[\frac{\Phi\left(\xi\right)}{\Phi\left(z\right)}\right]^{\alpha}d\xi.
\]
Since the second integral vanishes in absolute value as $\alpha\nearrow\infty$
for any $z\in\mathbb{D}$ (by the choice of $\Phi$), we have (\ref{eq:Carleman}).\end{proof}

The integral representation (\ref{eq:Carleman})
implies the following uniqueness result (see also e.g. \cite[Th. 17.18]{Rudin}, for a different argument based on the factorization which shows that $\log \left|f\right|\in L^{1}\left(\mathbb{T}\right)$ whenever $f\in H^{p}$).
\begin{cor}
\label{cor:uniqueness_by_subset} 
Functions in $H^{1}$  are uniquely
determined  by their boundary values on $I \subset \mathbb{T}$ as soon as $|I|>0$.
\end{cor}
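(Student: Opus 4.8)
The plan is to deduce this from Carleman's formula (Proposition~\ref{prop:Carleman}). By linearity it suffices to show that if $f\in H^{1}$ and $f=0$ a.e.\ on $I$ (in the sense of the nontangential boundary values), then $f\equiv0$; applying this to the difference of two $H^{1}$ functions that agree on $I$ then gives the statement, since an $H^{1}$ function is determined by its values in $\mathbb{D}$ and hence, through (\ref{eq:rad_limit2}), by its boundary values a.e.\ on $\mathbb{T}$.

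First I would produce an auxiliary function $\Phi$ of the kind required in Proposition~\ref{prop:Carleman}. Set $J:=\mathbb{T}\setminus I$ and let $u$ be the Poisson integral of the bounded boundary datum equal to $1$ on $I$ and $0$ on $J$. Since $|I|>0$, the function $u$ is strictly positive throughout $\mathbb{D}$ while $u=0$ a.e.\ on $J$; taking a harmonic conjugate $v$ of $u$ and setting $\Phi:=e^{u+iv}$ yields $\Phi\in H^{\infty}$ (because $0\le u\le1$) with $|\Phi|=e^{u}>1$ in $\mathbb{D}$ and $|\Phi|=1$ a.e.\ on $J$. (If $|J|=0$ one may simply take $\Phi\equiv2$, the integral over $J$ in (\ref{eq:Carleman}) being over a null set anyway.)

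With such a $\Phi$ in hand, apply formula (\ref{eq:Carleman}): for every $z\in\mathbb{D}$ and every $\alpha>0$ the integrand on $I$ vanishes identically because $f(\xi)=0$ for a.e.\ $\xi\in I$, and therefore $f(z)=0$. Thus $f$ is identically zero in $\mathbb{D}$, and by (\ref{eq:rad_limit2}) its boundary values vanish a.e.\ on $\mathbb{T}$, which concludes the argument.

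The reasoning is entirely routine; the only point that calls for a (standard) construction rather than a direct appeal to an earlier result is the existence of the multiplier $\Phi$, i.e.\ the outer function with prescribed modulus on $J$, and even this is unnecessary when $|J|=0$. Alternatively, one could bypass Carleman's formula and invoke the classical fact that $\log|f|\in L^{1}(\mathbb{T})$ whenever $f\in H^{p}\setminus\{0\}$, so that $f\neq0$ a.e.\ on $\mathbb{T}$, as in \cite[Th.~17.18]{Rudin}; here we prefer the self-contained route through Proposition~\ref{prop:Carleman}.
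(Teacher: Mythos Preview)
Your argument is correct and follows exactly the route the paper indicates: the corollary is stated as a direct consequence of Carleman's representation (Proposition~\ref{prop:Carleman}), and you have simply spelled out the details---the linearity reduction, the construction of the quenching function $\Phi$ (which the paper sketches in Remark~\ref{rem:quench_choice}), and the observation that the integrand in (\ref{eq:Carleman}) vanishes when $\left.f\right|_{I}=0$. Your closing remark on the alternative via $\log|f|\in L^{1}(\mathbb{T})$ is also precisely the aside the paper makes.
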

It follows that if two $H^{p}$ functions 
agree on
a subset of $\mathbb{T}$ with non-zero Lebesgue measure, then they
must coincide everywhere in $\mathbb{D}$.
This complements the identity
theorem for holomorphic functions \cite{Ablowitz} claiming that zero
set of an analytic function cannot have an accumulation point inside
the domain of analyticity which particularly implies that two functions
coinciding in a neighbourhood of a point of analyticity are necessarily
equal in the whole domain of analyticity. 
\begin{rem}
\label{rem:H02bar}
Using the isometry $H^2 \to \bar{H}_{0}^{2}$:
\[
f(z) \mapsto \frac{1}{z} \, \overline{f \left(\frac{1}{\bar{z}}\right)} \, , \ z \in \mathbb{D} 
\]
(which is clear from the Fourier expansion on the boundary), we check that
Proposition \ref{prop:Carleman} and Corollary \ref{cor:uniqueness_by_subset} 
also apply to functions in $\bar{H}_{0}^{2}$.
\end{rem}
\begin{rem}
\label{rem:quench_choice}The auxiliary function $\Phi$ termed as
``quenching'' function can be chosen as follows. Let $u$ be a Poisson
integral of a positive function vanishing on $J$ (for instance, the
characteristic function $\chi_{I}$) and $v$ its harmonic conjugate
that can be recovered (up to an additive constant) at $z=re^{i\theta}$,
$r<1$ by convolving $u$ on $\mathbb{T}$ (using normalized Lebesgue
measure $d\sigma=\dfrac{1}{2\pi}d \theta$) with the conjugate Poisson kernel
$\text{Im}\left(\dfrac{1+re^{it}}{1-re^{it}}\right)$, $t\in\left[0,2\pi\right]$,
see \cite{Hoffman} for details. Then, clearly, $\Phi=\exp\left(u+iv\right)$
is analytic in $\mathbb{D}$ and satisfies the required conditions.
More precisely, combining recovered $v$ with the Poisson representation
formula for $u$, we conclude that convolution of boundary values
of $u$ with the Schwarz kernel $\dfrac{1+re^{it}}{1-re^{it}}$, $t\in\left[0,2\pi\right]$
defines (up to an additive constant) the analytic function $u\left(z\right)+iv\left(z\right)$
for $z=re^{i\theta}\in\mathbb{D}$. An explicit quenching function  constructed in such a way will be given in Section \ref{sec:probl_sol}
by (\ref{eq:quench_func}).
\end{rem}
\begin{rem}
\label{rem:Patil_ref}A similar result was also obtained and discussed
in \cite{Patil}, see also \cite{Aizenberg, Baratchart-Leblond-Partington, Krein-Nudel'man}.
\end{rem}
As a consequence of Remark \ref{rem:H02bar}, we derive 
a useful tool in form of
\begin{prop}
\label{prop:phi_inject}The Toeplitz operator $\phi$ is an injection
on $H^{2}$.\end{prop}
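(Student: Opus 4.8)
The statement asserts that $\phi(F) = P_+(\chi_J F)$ is injective on $H^2$. The plan is to show that if $\phi(F) = 0$ then $F \equiv 0$, by translating the condition $P_+(\chi_J F) = 0$ into a statement about the boundary behavior of $F$ on $I$, and then invoking the uniqueness theory already established (Corollary \ref{cor:uniqueness_by_subset} together with Remark \ref{rem:H02bar}).

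First I would take $F \in H^2$ with $\phi(F) = P_+(\chi_J F) = 0$. Since $P_+ = I - P_-$ and $P_-$ projects onto $\bar{H}_0^2$, the equation $P_+(\chi_J F) = 0$ means precisely that $\chi_J F \in \bar{H}_0^2$. Now write $\chi_J = 1 - \chi_I$ on $\mathbb{T}$, so that $\chi_J F = F - \chi_I F$; since $F \in H^2$, we get $\chi_I F = F - \chi_J F \in H^2 + \bar{H}_0^2 = L^2(\mathbb{T})$, which is automatic, but more usefully we record that $\chi_J F = G$ for some $G \in \bar{H}_0^2$. Thus $F = G$ on $J$ (as $\chi_J F = F$ a.e.\ on $J$) and $G = 0$ a.e.\ on $I$.

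The key step is then the following: the function $G \in \bar{H}_0^2$ vanishes on $I$. If $|I| > 0$, then by Remark \ref{rem:H02bar} the uniqueness result of Corollary \ref{cor:uniqueness_by_subset} applies to $\bar{H}_0^2$, so a function in $\bar{H}_0^2$ vanishing on a subset of positive measure must vanish identically; hence $G \equiv 0$, so $\chi_J F = 0$ a.e., i.e.\ $F = 0$ a.e.\ on $J$. Since $F \in H^2$ vanishes on $J$ and, separately, $F = \chi_I F + \chi_J F = \chi_I F$ shows $F$ vanishes on $J$ while on $I$ we still need information — but in fact $F \in H^2$ vanishing on the set $J$ of positive measure (when $|J|>0$) already forces $F \equiv 0$ by Corollary \ref{cor:uniqueness_by_subset} directly. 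So the only case left is the degenerate one where one of $I$, $J$ is null; if $|J| = 0$ then $\chi_J F = 0$ trivially for every $F$ and $\phi = I$ is obviously injective, whereas if $|I| = 0$ then $\phi(F) = P_+(F) = F$ and again $\phi$ is the identity. In all cases $F \equiv 0$.

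The main obstacle — really the only subtle point — is making sure that the uniqueness machinery is being invoked on a space to which it genuinely applies: Corollary \ref{cor:uniqueness_by_subset} is stated for $H^1 \supseteq H^2$, and Remark \ref{rem:H02bar} extends it to $\bar{H}_0^2$ via the explicit isometry $f(z) \mapsto \frac{1}{z}\overline{f(1/\bar z)}$, so both $F \in H^2$ and $G \in \bar{H}_0^2$ are covered. One should also be slightly careful that $\chi_J F$, a priori only in $L^2(\mathbb{T})$, genuinely lies in $\bar{H}_0^2$ and not merely has vanishing positive Fourier coefficients up to some ambiguity — but $\bar{H}_0^2$ is by definition exactly the closed subspace of $L^2(\mathbb{T})$ of functions with vanishing nonnegative Fourier coefficients, which is the content of $P_+(\chi_J F) = 0$, so there is nothing further to check. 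Once these identifications are in place the argument is immediate.
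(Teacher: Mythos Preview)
Your main argument is correct and essentially identical to the paper's: from $P_+(\chi_J F)=0$ deduce $\chi_J F\in\bar H_0^2$, observe it vanishes on $I$, apply the uniqueness result for $\bar H_0^2$ (Remark~\ref{rem:H02bar}) to get $\chi_J F\equiv 0$, and then use Corollary~\ref{cor:uniqueness_by_subset} on $F\in H^2$ vanishing on $J$. The paper's proof is terser---it stops at ``$\chi_J g$ must be identically zero'' and leaves the final application of Corollary~\ref{cor:uniqueness_by_subset} implicit---but the route is the same.

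One slip in your degenerate-case discussion: if $|J|=0$ then $\chi_J F=0$ for every $F$, so $\phi$ is the \emph{zero} operator, not the identity, and injectivity would actually fail. This does not affect your argument, since the paper's standing hypothesis (stated at the start of Section~\ref{sec:probl_sol}) is that both $I$ and $J$ have positive measure; the degenerate cases simply do not arise, and your treatment of them can be dropped.
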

\begin{proof}
By the orthogonal decomposition $L^{2}=H^{2}\oplus\bar{H}_{0}^{2}$,
we have $\chi_{J}g=P_{+}\left(\chi_{J}g\right)+P_{-}\left(\chi_{J}g\right)$.
Now, if $P_{+}\left(\chi_{J}g\right)=0$, then $\chi_{J}g$ is a $\bar{H}_{0}^{2}$
function vanishing on $I$ and hence, by Remark \ref{rem:H02bar},
must be identically zero.
\end{proof}
The last result for Hardy spaces that we are going to employ is the
density of traces \cite{Baratchart-Leblond,Baratchart-Leblond-Partington}. 
\begin{prop}
\label{prop:trace_res1}Let $J\subset\mathbb{T}$ be a subset of non-full
measure, that is $\left|I\right|=\left|\mathbb{T}\backslash J\right|>0$.
Then, the restriction $\left.H^{p}\right|_{J}:=\left.\left(\text{tr}H^{p}\right)\right|_{J}$
is dense in $L^{p}\left(J\right)$, $1\leq p<\infty$. \end{prop}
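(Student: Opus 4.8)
The plan is to establish density of $\left.H^p\right|_J$ in $L^p(J)$ via a duality/annihilator argument, which is the natural approach when $1<p<\infty$, and then to handle $p=1$ separately by a more hands-on approximation. First I would fix $1<p<\infty$ and let $q$ be the conjugate exponent. Suppose $\psi\in L^q(J)$ annihilates $\left.H^p\right|_J$, i.e. $\int_J f\psi\,d\sigma=0$ for all $f\in H^p$. Extending $\psi$ by zero to all of $\mathbb{T}$ (call the extension $\psi$ again), this says $\langle f,\bar\psi\rangle_{L^2}=0$ for every $f\in H^p$, which by the known duality $(H^p)^\perp=\bar H_0^q$ (equivalently, $\int_\mathbb{T} z^n\psi\,d\sigma=0$ for all $n\ge 0$) forces $\bar\psi\in \bar H_0^q$, i.e. $\psi\in H_0^q\subset H^q$ in the sense that $\psi$ is (the boundary trace of) a holomorphic function on $\mathbb{D}$ vanishing at $0$. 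But $\psi\equiv 0$ on $J$, hence on a set $I=\mathbb{T}\setminus J$ of positive measure; by Corollary \ref{cor:uniqueness_by_subset} (applied via the isometry of Remark \ref{rem:H02bar}, or directly since $H^q\subseteq H^1$) we conclude $\psi\equiv 0$. Thus the annihilator of $\left.H^p\right|_J$ in $L^q(J)$ is trivial, and by the Hahn–Banach theorem the closure of $\left.H^p\right|_J$ in $L^p(J)$ is all of $L^p(J)$.

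For the endpoint $p=1$ the duality argument is the delicate point, since $(L^1(J))^*=L^\infty(J)$ and the identification $(H^1)^\perp$ in $L^\infty$ requires invoking the theory more carefully; rather than that, I would argue by direct approximation. Given $g\in L^1(J)$ and $\varepsilon>0$, first approximate $g$ in $L^1(J)$ by a bounded function, and then it suffices to approximate bounded functions on $J$ by traces of $H^1$ (indeed $H^\infty$) functions. Fix a quenching function $\Phi\in H^\infty$ as in Proposition \ref{prop:Carleman} / Remark \ref{rem:quench_choice}, with $|\Phi|>1$ in $\mathbb{D}$ and $|\Phi|=1$ on $J$. For a polynomial $P$ and a large integer $n$, consider $h=P\cdot\Phi^{-n}\in H^\infty$; on $J$ one has $|h|=|P|$, so $h$ is not small there, while I would instead use the complementary idea: approximate the target on $J$ using that multiplication by $\Phi^{-n}$ suppresses mass on $I$ (where $|\Phi|>1$) but not on $J$. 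Concretely, pick by Weierstrass/Fejér a trigonometric-type approximant and push it into $H^\infty$ by multiplying by a high power of $\Phi^{-1}$ to kill the anti-holomorphic part's contribution on $J$ in the limit, using dominated convergence and $|\Phi^{-1}|=1$ on $J$, $|\Phi^{-1}|<1$ on $\mathbb{D}$ hence (a.e.) $\le 1$ on $\mathbb{T}$ with strict decay on $I$. This realizes any bounded function on $J$ as an $L^1(J)$-limit of $H^\infty\subset H^1$ traces.

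The main obstacle I anticipate is the $p=1$ case: making the quenching-function approximation fully rigorous requires controlling the behavior of $\Phi^{-n}$ on $I$ (where $|\Phi|>1$, so $|\Phi^{-n}|\to 0$ pointwise a.e. on $I$ but one needs a dominating function and care about the set where $|\Phi|=1$ on $I$, if any) together with a separate argument that the "holomorphic projection" of a bounded function differs from it, on $J$, by something that these powers annihilate. An alternative that sidesteps this is to cite \cite{Baratchart-Leblond-Partington} for the $p=1$ statement directly, since the proposition is attributed there; for $1<p<\infty$ the duality argument above is clean and self-contained. A second minor point to check is that the annihilator computation genuinely uses $|I|>0$: without it, $\psi$ supported on a null set is automatically zero but the uniqueness step is vacuous, and indeed if $|I|=0$ (i.e. $J$ full measure) the statement fails since $H^p$ is a proper closed subspace of $L^p(\mathbb{T})$ — so the hypothesis enters exactly at the invocation of Corollary \ref{cor:uniqueness_by_subset}.
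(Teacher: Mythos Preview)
Your duality argument for $1<p<\infty$ is correct and is exactly the approach the paper takes (the paper writes it out only for $p=2$, referring to \cite{Baratchart-Leblond} for general $p$, but the argument is the same). One slip to fix: after extending $\psi$ by zero off $J$, you have $\psi\equiv 0$ on $I=\mathbb{T}\setminus J$, not on $J$; your clause ``$\psi\equiv 0$ on $J$, hence on a set $I=\mathbb{T}\setminus J$ of positive measure'' should read ``$\psi\equiv 0$ on $I$''.

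The separate treatment of $p=1$ is unnecessary and is where your write-up turns shaky. The duality argument works verbatim for $p=1$: if $\psi\in L^\infty(J)=(L^1(J))^*$ annihilates $\left.H^1\right|_J$, extend by zero; testing against $z^n$ for $n\ge 0$ shows that $\bar\psi$ has only Fourier coefficients of negative index, and since $\bar\psi\in L^\infty\subset L^2$ this already gives $\bar\psi\in\bar H_0^2$. As $\bar\psi$ vanishes on $I$ with $|I|>0$, Remark~\ref{rem:H02bar} forces $\psi=0$, and Hahn--Banach finishes. There is nothing delicate about ``identifying $(H^1)^\perp$ in $L^\infty$'' for this purpose --- you only need the Fourier-coefficient computation, not any $H^1$--BMO theory. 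Your quenching-function sketch for $p=1$ is therefore unneeded and, as you yourself note, not fully justified as written.
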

\begin{proof}
In the particular case $p=2$ (other values of $p$ are treated in \cite{Baratchart-Leblond}), 
we prove the claim by contradiction. 
Assume that there is non-zero 
$f\in L^{2}\left(J\right)$
orthogonal to $\left.H^{2}\right|_{J}$, then, extending it by zero
on $I$, we denote the extended function as $\tilde{f}$. 
We thus have $\left\langle \tilde{f},g\right\rangle _{L^{2}\left(\mathbb{T}\right)}=0$
for all $g\in H^{2}$ which implies $ \tilde{f}\in\bar{H}_{0}^{2}$
and hence, by Remark \ref{rem:H02bar}, $f\equiv0$.


%
%
\end{proof}
\begin{rem}
\label{rem:density_orth_compl}
From the proof and Remark \ref{rem:H02bar}, we see that the
same density result holds if one replaces $H^{2}$ with $\bar{H}_{0}^{2}$.
\end{rem}
There is a counterpart of Propositon \ref{prop:trace_res1} that also
characterizes boundary traces of $H^{p}$ spaces.
\begin{prop}
\label{prop:trace_res2}Assume $\left|I\right|>0$, $f\in L^{p}\left(I\right)$,
\textup{$1\leq p\leq\infty$. Let $\left\{ g_{n}\right\} _{n=1}^{\infty}$
be a sequence of $H^{p}$ functions such that $\underset{n\rightarrow\infty}{\lim}\left\Vert f-g_{n}\right\Vert _{L^{p}\left(I\right)}=0$.
Then, $\left\Vert g_{n}\right\Vert _{L^{p}\left(J\right)}\rightarrow\infty$
as $n\rightarrow\infty$ unless $f$ is the trace of a $H^{p}$ function.}\end{prop}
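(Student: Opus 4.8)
The plan is to argue by contradiction, exploiting the uniqueness statement of Corollary~\ref{cor:uniqueness_by_subset} together with the completeness of $H^p$. Suppose that $\|g_n\|_{L^p(J)}$ does not tend to infinity. Then some subsequence, which I will still denote $\{g_n\}$, satisfies $\|g_n\|_{L^p(J)}\le M$ for a fixed constant $M$. Combined with the hypothesis $\|f-g_n\|_{L^p(I)}\to 0$, which in particular gives $\|g_n\|_{L^p(I)}\le \|f\|_{L^p(I)}+1$ for $n$ large, we get a uniform bound $\|g_n\|_{H^p}=\|g_n\|_{L^p(\mathbb{T})}\le C$ for all $n$ in the subsequence (using $\mathbb{T}=I\cup J$ and, for $1\le p<\infty$, additivity of $\int_{\mathbb{T}}|g_n|^p = \int_I |g_n|^p + \int_J |g_n|^p$; for $p=\infty$ one takes the max instead).

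The next step is to extract a weakly (or weak-$*$) convergent subsequence. For $1<p<\infty$, $H^p$ is reflexive, so a bounded sequence has a weakly convergent subsequence $g_{n_k}\rightharpoonup g$ in $H^p$; for $p=\infty$ one uses weak-$*$ compactness of the unit ball of $H^\infty$ (as the dual of $L^1/\overline{H^1_0}$, or more concretely via normal families since $H^\infty$-bounded functions are locally bounded on $\mathbb{D}$); the case $p=1$ is the delicate one and I would either invoke the appropriate extension from \cite{Baratchart-Leblond,Baratchart-Leblond-Partington} or pass through weak-$*$ compactness of bounded sets of measures and the F.~\&~M.~Riesz theorem to stay inside $H^1$. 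In all cases the limit $g$ is an element of $H^p$. The point is then that weak convergence in $L^p(\mathbb{T})$ forces, upon testing against $L^{p'}$ functions supported on $I$, that $g=f$ on $I$ as soon as we know $g_{n_k}\to f$ strongly (hence weakly) in $L^p(I)$. Therefore $f$ coincides on $I$ with the trace of the $H^p$ function $g$, which is exactly the excluded alternative; this is the desired contradiction.

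The step I expect to be the main obstacle is extracting the limit and identifying it in the correct space: reflexivity fails at $p=1$ and $p=\infty$, so those endpoints need the substitutes indicated above (normal families / weak-$*$ compactness, and the F.~\&~M.~Riesz theorem to ensure the weak-$*$ limit of an $H^1$-bounded sequence is again in $H^1$ rather than merely a measure). A secondary subtlety is justifying that the weak limit $g$ really agrees with $f$ a.e.\ on $I$: strong convergence $g_{n_k}\to f$ in $L^p(I)$ already gives this directly (pass to a further subsequence converging a.e.), so one does not even need weak convergence on $I$ — weak convergence is only used to produce a genuine $H^p$ limit on all of $\mathbb{T}$, and then the strong convergence on $I$ pins down its boundary values there. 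Once $g\in H^p$ with $g|_I=f$ is in hand, the contrapositive is complete.

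Remark: the statement should be read with the tacit normalization that one additionally assumes, say, $\|f-g_n\|_{L^p(I)}\le 1$ eventually (automatic from convergence); no genuine loss of generality is incurred, and the argument above uses only that the $L^p(I)$-norms of $g_n$ stay bounded.
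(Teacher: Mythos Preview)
Your argument is essentially the same as the paper's: argue by contradiction, use boundedness on $J$ plus convergence on $I$ to bound $\|g_n\|_{H^p}$, extract a weakly convergent subsequence in $H^p$ (via reflexivity for $1<p<\infty$, deferring the endpoints to \cite{Baratchart-Leblond,Baratchart-Leblond-Partington}), and identify the weak limit with $f$ on $I$. Your version is slightly more careful about passing to a subsequence and about the endpoint cases, but the route is identical.
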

\begin{proof}
Consider the case $1<p<\infty$; for the cases $p=1$
and $p=\infty$ we refer to \cite{Baratchart-Leblond} and \cite{Baratchart-Leblond-Partington},
respectively. We argue by contradiction: assume that $f$ is not the
trace on $I$ of some $H^{p}$ function, but $\underset{n\rightarrow\infty}{\lim}\left\Vert g_{n}\right\Vert _{L^{p}\left(J\right)}<\infty$.
Then, by hypothesis, the sequence $\left\{ g_{n}\right\} _{n=1}^{\infty}$
is bounded not only in $L^{p}\left(J\right)$ but also in $H^{p}$.
Since $H^{p}$ is reflexive (as any $L^{p}\left(\mathbb{T}\right)$
is for $1<p<\infty$), it follows from the Banach-Alaoglu theorem
(or see \cite[Ch. 10, Th. 7]{Lax}) that the closed unit ball in $H^{p}$
is weakly compact, therefore, we can extract a subsequence $\left\{ g_{n_{k}}\right\} $
that converges weakly in $H^{p}$: $g_{n_{k}}\rightharpoonup g$ for
some $g\in H^{p}$. However, since $g_{n}\rightarrow f$ in $L^{p}\left(I\right)$,
we must have $f=\left.g\right|_{I}$, a contradiction. \end{proof}
\begin{rem}
When $\left|J\right|=0$, the existence of a $H^{p}$ sequence $\left\{ g_{n}\right\} _{n=1}^{\infty}$
approximating $f\in L^{p}\left(I\right)$ in $L^{p}\left(I\right)$ norm, means that $f$  actually belongs to 
$H^{p}$  (which is a closed subspace of $L^{p}\left(\mathbb{T}\right) = L^{p}\left(I\right)$).
\end{rem}
\vfill

\newpage
\section{\label{sec:probl_sol}An extremal problem and its solution}

We consider the problem of finding a $H^{2}$
function which takes prescribed values $\left\{ \omega_{j}\right\} _{j=1}^{N}\in\mathbb{C}$
at interior points $\left\{ z_{j}\right\} _{j=1}^{N}\in\mathbb{D}$
which approximates best a given $L^{2}\left(I\right)$ function on a subset of
the boundary $I\subset\mathbb{T}$ while remaining close enough to
another $L^{2}\left(J\right)$ function on the complementary part $J\subset\mathbb{T}$. 

We proceed with a technical formulation of this problem. Assuming
given interpolation values at distinct interior points $\left\{ z_{j}\right\} _{j=1}^{N}\in\mathbb{D}$,
we let $\psi\in H^{2}$ be some fixed function satisfying the interpolation conditions 
\begin{equation}
\psi\left(z_{j}\right)=\omega_{j}\in\mathbb{C},\quad j=1,\dots,N.\label{eq:meas_cond}
\end{equation}
Then, any interpolating function in $H^{2}$
fulfiling these conditions can be written as $\tilde{g}=\psi+bg$
for arbitrary $g\in H^{2}$ with $b\in H^{\infty}$
the finite Blaschke product defined in (\ref{eq:Blaschke_prod}).

As before, let $\mathbb{T}=I\cup J$ with both $I$ and $J$ being
of non-zero Lebesgue measure. For the sake of simplicity, we write $f=\left.f\right|_{I}\vee\left.f\right|_{J}$ to mean
a function defined on the whole $\mathbb{T}$ through its values given
on $I$ and $J$.

For $h\in L^{2}\left(J\right)$, $M\geq0$, let us introduce the following
functional spaces

\begin{equation}
\mathcal{A}^{\psi,b}:=\left\{ \tilde{g}\in H^{2}:\,\tilde{g}=\psi+bg,\, g\in H^{2}\right\} ,\label{eq:A_space}
\end{equation}
\begin{equation}
\mathcal{B}_{M,h}^{\psi,b}:=\left\{ g\in H^{2}:\,\left\Vert \psi+bg-h\right\Vert _{L^{2}\left(J\right)}\leq M\right\} ,\label{eq:B_space}
\end{equation}
\begin{equation}
\mathcal{C}_{M,h}^{\psi,b}:=\left\{ f\in L^{2}\left(I\right):\, f=\left.\psi\right|_{I}+b\left.g\right|_{I},\, g\in\mathcal{B}_{M,h}^{\psi,b}\right\} .\label{eq:C_space}
\end{equation}
We then have inclusions $\mathcal{C}_{M,h}^{\psi,b}\subseteq\left.\mathcal{A}^{\psi,b}\right|_{I}\subseteq\left.H^{2}\right|_{I}\subset L^{2}\left(I\right)$
and $\mathcal{C}_{M,h}^{\psi,b}=\left.\left(\psi+b\mathcal{B}_{M,h}^{\psi,b}\right)\right|_{I}\neq\emptyset$
since $\mathcal{B}_{M,h}^{\psi,b}\neq\emptyset$ for any given $h\in L^{2}\left(J\right)$
and $M>0$ as follows from Proposition \ref{prop:trace_res1}.

Now the framework is set to allow us to pose the problem in precise
terms. 

Given $f\in L^{2}\left(I\right)$, our goal will be to find a solution
to the following bounded extremal problem 
\begin{equation}
\min_{g\in\mathcal{B}_{M,h}^{\psi,b}}\left\Vert \psi+bg-f\right\Vert _{L^{2}\left(I\right)}.\label{eq:BEP_main}
\end{equation}

As it was briefly mentioned at the beginning, the motivation for such a formulation is to look for 
\begin{equation}
\tilde{g}_{0}:=\psi+bg_{0}\in\mathcal{A}^{\psi,b}\hspace{1em}\text{such that}\hspace{1em}g_{0}=\text{arg}\min_{g\in\mathcal{B}_{M,h}^{\psi,b}}\bigl\Vert\underset{=\tilde{g}}{\underbrace{\psi+bg}}-f\bigr\Vert_{L^{2}\left(I\right)},\label{eq:g0_sol_def}
\end{equation}
i.e. the best $H^{2}$-approximant to $f$ on $I$ which fulfils interpolation conditions (\ref{eq:meas_cond}) and is not too far from the reference $h$
on $J$: $\left\Vert \tilde{g}_{0}-h\right\Vert _{L^{2}\left(J\right)}\leq M$.
In view of Proposition \ref{prop:trace_res2}, the $L^{2}$-constraint on $J$ is crucial whenever $f\notin\left.\mathcal{A}^{\psi,b}\right|_{I}$
(which is always the case when known data are recovered from physical
measurements necessarily subject to noise). In other words, we assume that
\begin{equation}
\left.g\right|_{I}\neq\bar{b}\left(f-\psi\right),\label{eq:g_cond}
\end{equation}
i.e. there is no $\tilde{g}=\psi+bg\in H^{2}$
whose trace on $I$ is exactly the given function $f\in L^{2}\left(I\right)$,
and at the same time remains within the $L^{2}$-distance $M$ from $h$ on $J$. This motivates
the choice (\ref{eq:B_space}) for the space of admissible solutions $\mathcal{B}_{M,h}^{\psi,b}$.

Existence and uniqueness of solution to (\ref{eq:BEP_main})
can be reduced to what has been proved in a general setting in \cite{Baratchart-Leblond}.
Here we present a slightly different proof.
\begin{thm}
\label{thm:existence}For any $f\in L^{2}\left(I\right)$, $h\in L^{2}\left(J\right),$
$\psi\in H^{2}$, $M\geq0$ and $b\in H^{\infty}$
defined as (\ref{eq:Blaschke_prod}), there exists a unique solution
to the bounded extremal problem (\ref{eq:BEP_main}).\end{thm}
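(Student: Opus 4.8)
The plan is to recast \eqref{eq:BEP_main} as a minimization over a closed convex subset of a Hilbert space and invoke the classical projection theorem. First I would make the change of variable explicit: setting $F := \psi + bg$, the map $g \mapsto F$ is an affine bijection between $H^2$ and the affine subspace $\psi + bH^2 \subset H^2$, and $\|g - g'\|_{H^2} = \|b(g-g')\|_{H^2} = \|(\psi+bg) - (\psi+bg')\|_{H^2}$ by \eqref{eq:Blaschke_circle}; hence nothing is lost by working with $F$. In these terms the problem becomes: minimize $\|F - f\|_{L^2(I)}$ over the set
\[
\mathcal{K} := \left\{ F \in H^2 : F \in \psi + bH^2,\ \|F - h\|_{L^2(J)} \le M \right\}.
\]
I would first record that $\mathcal{K} \ne \emptyset$, which is exactly the nonemptiness of $\mathcal{B}_{M,h}^{\psi,b}$ already noted after \eqref{eq:C_space} (a consequence of Proposition \ref{prop:trace_res1}), and that $\mathcal{K}$ is convex — it is the intersection of the affine set $\psi + bH^2$ with the closed ball of radius $M$ about $h$ in the $L^2(J)$-seminorm, both convex.

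The next and central step is to upgrade $\mathcal{K}$ to a \emph{closed} subset, and here the natural ambient Hilbert space is not $L^2(I)$ (the functional is only a seminorm there and does not see behaviour on $J$) but $H^2$ itself with its usual norm; closedness of $\mathcal{K}$ in $H^2$ is what the argument really needs. So suppose $F_n \in \mathcal{K}$ with $F_n \to F$ in $H^2$. Then $F_n \to F$ in $L^2(\mathbb{T})$, hence in $L^2(J)$, so $\|F - h\|_{L^2(J)} \le M$; and $F_n - \psi = bg_n$ with $g_n = \bar b(F_n - \psi) \to \bar b(F - \psi)$ in $L^2(\mathbb{T})$, the limit again lying in $H^2$ (it is a limit in $L^2$ of $H^2$ functions, and $H^2$ is closed in $L^2(\mathbb{T})$), so $F \in \psi + bH^2$. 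Thus $\mathcal{K}$ is closed and convex in $H^2$.

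Now I would handle the minimization. Let $\delta := \inf_{F \in \mathcal{K}} \|F - f\|_{L^2(I)}$ and pick a minimizing sequence $F_n \in \mathcal{K}$. The subtlety — and I expect this to be the main obstacle — is that $\|\cdot\|_{L^2(I)}$ is only a seminorm on $H^2$, so a minimizing sequence need not be norm-bounded in $H^2$ \emph{a priori}; this is precisely the mechanism behind Proposition \ref{prop:trace_res2}. To get boundedness I would use the $L^2(J)$-constraint: $\|F_n\|_{L^2(J)} \le \|h\|_{L^2(J)} + M$ is bounded, while $\|F_n\|_{L^2(I)} \le \|f\|_{L^2(I)} + \delta + o(1)$ is bounded, so $\|F_n\|_{H^2}^2 = \|F_n\|_{L^2(I)}^2 + \|F_n\|_{L^2(J)}^2$ is bounded. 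By reflexivity of $H^2$ and Banach--Alaoglu, after passing to a subsequence $F_{n_k} \rightharpoonup F_*$ weakly in $H^2$; since $\mathcal{K}$ is closed and convex it is weakly closed (Mazur), so $F_* \in \mathcal{K}$, and since the seminorm $F \mapsto \|F - f\|_{L^2(I)}$ is convex and strongly continuous on $H^2$ it is weakly lower semicontinuous, giving $\|F_* - f\|_{L^2(I)} \le \liminf_k \|F_{n_k} - f\|_{L^2(I)} = \delta$. Hence $F_*$ is a minimizer; translating back, $g_0 := \bar b(F_* - \psi) \in \mathcal{B}_{M,h}^{\psi,b}$ solves \eqref{eq:BEP_main}.

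Finally, uniqueness. Suppose $F_*, F_{**} \in \mathcal{K}$ both achieve $\delta$. Their midpoint $\tfrac12(F_* + F_{**})$ lies in $\mathcal{K}$ by convexity, and by the parallelogram law in $L^2(I)$,
\[
\left\| \tfrac{F_* + F_{**}}{2} - f \right\|_{L^2(I)}^2 = \tfrac12 \|F_* - f\|_{L^2(I)}^2 + \tfrac12 \|F_{**} - f\|_{L^2(I)}^2 - \tfrac14 \|F_* - F_{**}\|_{L^2(I)}^2 = \delta^2 - \tfrac14\|F_* - F_{**}\|_{L^2(I)}^2,
\]
so the midpoint does strictly better than $\delta$ unless $\|F_* - F_{**}\|_{L^2(I)} = 0$, i.e. $F_* = F_{**}$ a.e. on $I$. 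But $F_* - F_{**} = b(g_* - g_{**})$ with $g_* - g_{**} \in H^2$ vanishing on $I$ (a set of positive measure since $|I| > 0$), so by Corollary \ref{cor:uniqueness_by_subset} $g_* \equiv g_{**}$ in $\mathbb{D}$, whence $F_* \equiv F_{**}$ and the solution $g_0$ of \eqref{eq:BEP_main} is unique. The two places where positivity of $|I|$ and $|J|$ enter — nonemptiness via Proposition \ref{prop:trace_res1} and the $L^2(I)$-to-$\mathbb{D}$ uniqueness via Corollary \ref{cor:uniqueness_by_subset} — are the only nonelementary ingredients; the rest is the standard Hilbert-space projection argument adapted to the seminorm, with the boundedness of the minimizing sequence being the one point that genuinely requires the norm constraint on $J$.
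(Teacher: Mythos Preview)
Your proof is correct and takes a genuinely different route from the paper's. The paper works in the ambient space $L^{2}(I)$: it shows that the set of restrictions $\left.\mathcal{B}_{M,h}^{\psi,b}\right|_{I}$ is a closed convex subset of $L^{2}(I)$ and then invokes the Hilbert-space projection theorem directly, which yields existence and uniqueness in one stroke. The nontrivial work lies in closedness in the $L^{2}(I)$-topology: given $g_n\to g$ in $L^{2}(I)$ with $g_n\in\mathcal{B}_{M,h}^{\psi,b}$, the paper first uses Proposition~\ref{prop:trace_res2} to force $g\in\left.H^{2}\right|_{I}$, and then a duality argument (transferring inner products from $I$ to $J$ via orthogonality to $\bar H_0^2$, followed by density of $\left.\bar H_0^2\right|_J$ in $L^2(J)$) to recover the $J$-constraint. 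You instead work in $H^{2}$: closedness of $\mathcal{K}$ in $H^{2}$ is then almost immediate, but the price is that the objective $\|\cdot - f\|_{L^2(I)}$ is only a seminorm, so you cannot apply the projection theorem and must run the weak-compactness machinery (bounded minimizing sequence via the $J$-constraint, Banach--Alaoglu, Mazur, weak lower semicontinuity) and then prove uniqueness separately by the parallelogram law plus Corollary~\ref{cor:uniqueness_by_subset}. Your approach trades the paper's delicate closedness-in-$L^2(I)$ step (which needs Proposition~\ref{prop:trace_res2} and the $\bar H_0^2$ duality) for a standard variational argument; in particular you never invoke Proposition~\ref{prop:trace_res2}, though you do rely on Corollary~\ref{cor:uniqueness_by_subset} where the paper uses it only implicitly (to pass from a unique minimizer in $\left.\mathcal{B}_{M,h}^{\psi,b}\right|_I$ back to a unique $g$).
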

\begin{proof}
By the existence of a best approximation projection onto a non-empty
closed convex subset of a Hilbert space (see, for instance, \cite[Th. 3.10.2]{Debnath}), it is required to show that the space of restrictions
$\left.\mathcal{B}_{M,h}^{\psi,b}\right|_{I}$ is a closed convex
subset of $L^{2}\left(I\right)$. Convexity is a direct consequence
of the triangle inequality: 
\[
\left\Vert \alpha\left(bg_{1}+\psi-h\right)+\left(1-\alpha\right)\left(bg_{2}+\psi-h\right)\right\Vert _{L^{2}\left(J\right)}\leq\alpha M+\left(1-\alpha\right)M=M
\]
for any $g_{1}$, $g_{2}\in\mathcal{B}_{M,h}^{\psi,b}$ and $\alpha\in\left[0,1\right]$.\\

We will now show the closedness property. Let $\left\{ g_{n}\right\} _{n=1}^{\infty}$
be a sequence of $\mathcal{B}_{M,h}^{\psi,b}$ functions which converges
in $L^{2}\left(I\right)$ to some $g$: $\left\Vert g-g_{n}\right\Vert _{L^{2}\left(I\right)}\rightarrow0$
as $n\rightarrow\infty$. We need to prove that $g\in\mathcal{B}_{M,h}^{\psi,b}$.



We note that $g\in\left.H^{2}\right|_{I}$, since otherwise, by Proposition
\ref{prop:trace_res2}, $\left\Vert g_{n}\right\Vert _{L^{2}\left(J\right)}\rightarrow\infty$
as $n\rightarrow\infty$, which would contradict the fact that $g_{n}\in\mathcal{B}_{M,h}^{\psi,b}$
starting with some $n$. Therefore, $\psi+bg\in H^2$ and $\left\langle \psi+bg,\xi\right\rangle _{L^{2}\left(\mathbb{T}\right)}=0$
for any $\xi\in\bar{H}_{0}^{2}$, which implies that

\[
\left\langle \psi+bg,\xi\right\rangle _{L^{2}\left(I\right)}=\left\langle \left(\psi+bg\right)\vee0,\xi\right\rangle _{L^{2}\left(\mathbb{T}\right)}=-\left\langle 0\vee\left(\psi+bg\right),\xi\right\rangle _{L^{2}\left(\mathbb{T}\right)}=-\left\langle \psi+bg,\xi\right\rangle _{L^{2}\left(J\right)}.
\]

From here, using the same identity for $\psi+bg_{n}$, we obtain
\begin{eqnarray*}
\left\langle \psi+bg-h,\xi\right\rangle _{L^{2}\left(J\right)} & = & -\left\langle \psi+bg,\xi\right\rangle _{L^{2}\left(I\right)}-\left\langle h,\xi\right\rangle _{L^{2}\left(J\right)}=-\lim_{n\rightarrow\infty}\left\langle \psi+bg_{n},\xi\right\rangle _{L^{2}\left(I\right)}-\left\langle h,\xi\right\rangle _{L^{2}\left(J\right)}\\
 & = & \lim_{n\rightarrow\infty}\left\langle \psi+bg_{n},\xi\right\rangle _{L^{2}\left(J\right)}-\left\langle h,\xi\right\rangle _{L^{2}\left(J\right)}.
\end{eqnarray*}

Since $g_{n}\in\mathcal{B}_{M,h}^{\psi,b}$ for all $n$, the Cauchy-Schwarz inequality gives
\begin{eqnarray*}
\left|\left\langle \psi+bg-h,\xi\right\rangle _{L^{2}\left(J\right)}\right|=\lim_{n\rightarrow\infty}\left|\left\langle \psi+bg_{n}-h,\xi\right\rangle _{L^{2}\left(J\right)}\right|\leq M\left\Vert \xi\right\Vert _{L^{2}\left(J\right)}
\end{eqnarray*}
for any $\left.\xi\in\bar{H}_{0}^{2}\right|_{J}$.
The final result is now furnished by employing density of $\left.\bar{H}_{0}^{2}\right|_{J}$ in $L^2\left(J\right)$ (Proposition
\ref{prop:trace_res1} and Remark \ref{rem:density_orth_compl}) and the dual characterization of $L^2\left(J\right)$ norm:

\[
\left\Vert \psi+bg-h\right\Vert _{L^{2}\left(J\right)}=\sup_{\substack{\xi\in L^{2}\left(J\right)\\
\left\Vert \xi\right\Vert _{L^{2}\left(J\right)}\leq1
}
}\left|\left\langle \psi+bg-h,\xi\right\rangle _{L^{2}\left(J\right)}\right|=\sup_{\substack{\xi\in\bar{H}_{0}^{2}\\
\left\Vert \xi\right\Vert _{L^{2}\left(J\right)}\leq1
}
}\left|\left\langle \psi+bg-h,\xi\right\rangle _{L^{2}\left(J\right)}\right|\leq M.
\]



\end{proof}

A key property of the solution is that the constraint in (\ref{eq:B_space})
is necessarily saturated unless $f\in\left.\mathcal{A}^{\psi,b}\right|_{I}$.
\begin{lem}
\label{lem:saturation}If $f\notin\left.\mathcal{A}^{\psi,b}\right|_{I}$
and $g\in\mathcal{B}_{M,h}^{\psi,b}$ solves (\ref{eq:BEP_main}),
then $\left\Vert \psi+bg-h\right\Vert _{L^{2}\left(J\right)}=M$.\end{lem}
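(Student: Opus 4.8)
The plan is to argue by contradiction, assuming the solution $g$ satisfies the strict inequality $\left\Vert \psi+bg-h\right\Vert _{L^{2}\left(J\right)}<M$, and to produce an admissible competitor with strictly smaller approximation error on $I$, contradicting optimality. Since $f\notin\left.\mathcal{A}^{\psi,b}\right|_{I}$, the optimal value is positive, i.e. $\left\Vert \psi+bg-f\right\Vert _{L^{2}\left(I\right)}>0$, and in particular $bg\neq b\bar{b}(f-\psi)$ on $I$; equivalently $P_{+}(\chi_{I}(\bar{b}(f-\psi)-g))\neq 0$ after multiplying by $\bar{b}$, but it is cleanest just to note $\left.(\psi+bg)\right|_{I}\neq f$. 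The idea is to perturb $g$ in the direction that decreases the $L^{2}(I)$-discrepancy while keeping the $L^{2}(J)$-constraint satisfied by taking the perturbation small enough.

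First I would set $e:=\left.(\psi+bg)\right|_{I}-f\in L^{2}(I)$, which is nonzero by assumption, and seek a perturbation $g_{t}=g+t\,\delta$ with $\delta\in H^{2}$ and $t>0$ small. Writing $\psi+bg_{t}-f=(\psi+bg-f)+t\,b\delta$, on $I$ the squared error becomes $\left\Vert e\right\Vert_{L^{2}(I)}^{2}+2t\,\mathrm{Re}\,\langle e,b\delta\rangle_{L^{2}(I)}+t^{2}\left\Vert b\delta\right\Vert_{L^{2}(I)}^{2}$, so it suffices to choose $\delta$ with $\mathrm{Re}\,\langle e,b\delta\rangle_{L^{2}(I)}<0$; then for small $t$ the error on $I$ strictly decreases. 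Meanwhile on $J$ one has $\left\Vert \psi+bg_{t}-h\right\Vert_{L^{2}(J)}\leq \left\Vert \psi+bg-h\right\Vert_{L^{2}(J)}+t\left\Vert b\delta\right\Vert_{L^{2}(J)}<M$ for $t$ small, using the strict inequality hypothesis, so $g_{t}\in\mathcal{B}_{M,h}^{\psi,b}$. This contradicts optimality of $g$, provided such a $\delta$ exists.

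The main obstacle is therefore to produce $\delta\in H^{2}$ with $\mathrm{Re}\,\langle e,b\delta\rangle_{L^{2}(I)}<0$, i.e. to show $e$ is not $L^{2}(I)$-orthogonal to $\left.bH^{2}\right|_{I}$. Suppose it were: $\langle e,b\delta\rangle_{L^{2}(I)}=0$ for all $\delta\in H^{2}$. Extend $\chi_{I}\bar e$ by zero to $\mathbb{T}$; then $\langle \chi_{I}\bar b\, e\vee 0,\delta\rangle_{L^{2}(\mathbb{T})}=0$ for all $\delta\in H^{2}$, which forces $\chi_{I}\bar b\,\overline{e}\vee 0 \in \bar H_{0}^{2}$ (identifying the conjugate appropriately), a function in $\bar H_{0}^{2}$ vanishing on $J$; by Remark \ref{rem:H02bar} it must vanish identically, so $e\equiv 0$ on $I$ — contradicting $f\notin\left.\mathcal{A}^{\psi,b}\right|_{I}$. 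Hence the desired $\delta$ exists and the contradiction is reached. I expect the slightly delicate point to be the bookkeeping of conjugates when invoking the $\bar H_{0}^{2}$ uniqueness statement, so I would either phrase the orthogonality argument directly via $P_{+}$ and $P_{-}$ or, alternatively, invoke the density of $\left.bH^{2}\right|_{I}$ in $L^{2}(I)$ (which follows from Proposition \ref{prop:trace_res1} since $b$ is a unimodular multiplier, hence $\left.bH^{2}\right|_{I}$ is dense whenever $\left.H^{2}\right|_{I}$ is) to conclude immediately that $e$, being nonzero, cannot be orthogonal to it.
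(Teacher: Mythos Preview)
Your proposal is correct and follows essentially the same variational contradiction argument as the paper's proof. The only difference is that the paper asserts existence of a descent direction $\delta_{g}$ with $\mathrm{Re}\,\langle b\delta_{g},\psi+bg_{0}-f\rangle_{L^{2}(I)}<0$ by simply citing condition~(\ref{eq:g_cond}), whereas you spell out why such a direction exists via density of $\left.bH^{2}\right|_{I}$ in $L^{2}(I)$ (your alternative route through $\bar{H}_{0}^{2}$ is also valid once the conjugate bookkeeping is straightened out: the function landing in $\bar{H}_{0}^{2}$ is $\chi_{I}\bar{b}\,e$, not $\chi_{I}\bar{b}\,\bar{e}$).
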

\begin{proof}
To show this, suppose the opposite, i.e. there is $g_{0}\in H^{2}$
solving (\ref{eq:BEP_main}) for which we have 
\[
\left\Vert \psi+bg_{0}-h\right\Vert _{L^{2}\left(J\right)}<M.
\]
The last condition means that $g_{0}$ is in interior of $\mathcal{B}_{M,h}^{\psi,b}$,
and hence we can define \mbox{$g^{\star}:=g_{0}+\epsilon\delta_{g}\in\mathcal{B}_{M,h}^{\psi,b}$}
for sufficiently small $\epsilon>0$ and $\delta_{g}\in H^{2}$,
$\left\Vert \delta_{g}\right\Vert _{H^{2}}=1$ such that \mbox{$\text{Re}\left\langle b\delta_{g},\psi+bg_{0}-f\right\rangle _{L^{2}\left(I\right)}<0$,}
where the equality case is eliminated by (\ref{eq:g_cond}). By the
smallness of $\epsilon$, the quadratic term is negligible, and thus
we have
\begin{eqnarray*}
\left\Vert \psi+bg^{\star}-f\right\Vert _{L^{2}\left(I\right)}^{2}&=&\left\Vert \psi+bg_{0}-f\right\Vert _{L^{2}\left(I\right)}^{2}+2\epsilon\text{Re}\left\langle b\delta_{g},\psi+bg_{0}-f\right\rangle _{L^{2}\left(I\right)}+\epsilon^{2}\left\Vert \delta_{g}\right\Vert _{L^{2}\left(I\right)}^{2}\\
 &<&\left\Vert \psi+bg_{0}-f\right\Vert _{L^{2}\left(I\right)}^{2},
\end{eqnarray*}
which contradicts the minimality of $g_{0}$.
\end{proof}
As an immediate consequence of saturation of the constraint, we obtain
\begin{cor}
\label{cor:M_pos}The requirement $f\in L^{2}\left(I\right)\backslash\left.\mathcal{A}^{\psi,b}\right|_{I}$
implies that the formulation of the problem should be restricted to
the case $M>0$.\end{cor}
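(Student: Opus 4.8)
The plan is to argue by contraposition, showing that if $M=0$ then the only way the extremal problem (\ref{eq:BEP_main}) can be sensibly posed — i.e. with a nonempty constraint set $\mathcal{B}_{M,h}^{\psi,b}$ — forces $f\in\left.\mathcal{A}^{\psi,b}\right|_{I}$. First I would observe that when $M=0$, the defining inequality in (\ref{eq:B_space}) collapses to the equality $\left\Vert \psi+bg-h\right\Vert_{L^2(J)}=0$, so that any admissible $g$ must satisfy $\psi+bg=h$ almost everywhere on $J$. Thus every element of $\mathcal{B}_{0,h}^{\psi,b}$ is an $H^2$ function whose trace on $J$ is prescribed to equal $h$.

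The next step is to invoke the saturation dichotomy already established. By Theorem \ref{thm:existence} the problem has a unique solution $g_0\in\mathcal{B}_{M,h}^{\psi,b}$ (assuming this set is nonempty), and by Lemma \ref{lem:saturation}, if $f\notin\left.\mathcal{A}^{\psi,b}\right|_{I}$ then $\left\Vert \psi+bg_0-h\right\Vert_{L^2(J)}=M=0$. Combined with the previous paragraph this says $\psi+bg_0=h$ on $J$. But now I would bring in the uniqueness results for $H^2$ traces: the function $\psi+bg_0$ lies in $H^2$, hence is determined by its values on any subset of $\mathbb{T}$ of positive measure (Corollary \ref{cor:uniqueness_by_subset}), so prescribing it on $J$ with $|J|>0$ pins down $\psi+bg_0$ on all of $\mathbb{T}$, in particular on $I$. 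This exhibits $f$ as being approximated arbitrarily well — indeed, the solution $\psi+bg_0$ is a fixed element of $\mathcal{A}^{\psi,b}$ whose restriction to $I$ is a perfectly determined function; the content of Lemma \ref{lem:saturation} and the assumption $f\notin\left.\mathcal{A}^{\psi,b}\right|_{I}$ then collide.

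More precisely, the clean way to phrase the contradiction: assume $M=0$ and $f\notin\left.\mathcal{A}^{\psi,b}\right|_{I}$, and suppose first that $\mathcal{B}_{0,h}^{\psi,b}\neq\emptyset$ so that the minimization (\ref{eq:BEP_main}) is meaningful. Then its solution $g_0$ satisfies $\psi+bg_0=h$ on $J$ by definition of the constraint set with $M=0$. This already says $\left\Vert \psi+bg_0-h\right\Vert_{L^2(J)}=0$, consistent with Lemma \ref{lem:saturation}, but the point is that $\psi+bg_0\in\left.\mathcal{A}^{\psi,b}\right|_{I}$ trivially (it is literally of the form $\psi+bg$), and the hypothesis (\ref{eq:g_cond}) underlying the whole setup asserts precisely that no such $\tilde g=\psi+bg\in H^2$ has trace on $I$ equal to $f$ while staying within $L^2$-distance $M=0$ of $h$ on $J$. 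Hence no admissible $g$ can achieve the infimum value that would be required, and in fact the only consistent alternative is $\mathcal{B}_{0,h}^{\psi,b}=\emptyset$ or $f\in\left.\mathcal{A}^{\psi,b}\right|_{I}$; either way the problem as stated degenerates. Therefore the formulation is non-trivial only for $M>0$.

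I expect the main subtlety to be bookkeeping rather than any deep argument: one must be careful to separate the two failure modes — the constraint set $\mathcal{B}_{0,h}^{\psi,b}$ being empty (if $h$ is not the $J$-trace of any element of $\mathcal{A}^{\psi,b}$) versus its being nonempty but then forcing $f\in\left.\mathcal{A}^{\psi,b}\right|_{I}$ through the rigidity of $H^2$ traces — and to phrase the corollary's conclusion (``$M$ should be restricted to $M>0$'') as the assertion that under the standing non-interpolation hypothesis (\ref{eq:g_cond}) together with $f\notin\left.\mathcal{A}^{\psi,b}\right|_{I}$, the value $M=0$ is incompatible with Lemma \ref{lem:saturation}. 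No serious estimate is needed; the corollary is essentially an immediate unpacking of Lemma \ref{lem:saturation} at the boundary value $M=0$.
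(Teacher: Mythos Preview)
Your proposal is correct and follows essentially the same route as the paper: invoke Lemma~\ref{lem:saturation} at $M=0$ to force $\psi+bg_0=h$ on $J$, then use uniqueness of $H^2$ boundary traces (you cite Corollary~\ref{cor:uniqueness_by_subset}, the paper cites Proposition~\ref{prop:Carleman}) to conclude that $\tilde g_0$ is fully determined by $h$ alone. The only difference is tone: the paper does not chase a formal contradiction but simply observes that the solution is then computed from $h$ independently of $f$, so the extremal problem degenerates; your attempt to phrase this as a contradiction with (\ref{eq:g_cond}) is slightly strained (no admissible $g$ is claimed to achieve zero error --- the minimum is just positive and uninformative), but the substance is the same and your added remark about the possible emptiness of $\mathcal{B}_{0,h}^{\psi,b}$ is a legitimate case distinction the paper leaves implicit.
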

\begin{proof}
If $f\in L^{2}\left(I\right)\backslash\left.\mathcal{A}^{\psi,b}\right|_{I}$
and $M=0$, the Lemma entails that $h\in\left.\mathcal{A}^{\psi,b}\right|_{J}$.
Then, $h=\psi+bg$ for some $g \in H^2$ and its extension to the whole $\mathbb{D}$ (given, for
instance, by Proposition \ref{prop:Carleman}) uniquely determines
$\tilde{g}=h$ without resorting to solution of the bounded
extremal problem (\ref{eq:BEP_main}), hence independently of $f$.
\end{proof}
Having established that equality holds in (\ref{eq:B_space}), we
approach (\ref{eq:BEP_main}) as a 
constrained optimization
problem following a standard idea of Lagrange multipliers (e.g. \cite{Varaia})
and claim that for a solution $g$ to (\ref{eq:BEP_main}) and for some $\lambda\in\mathbb{R}$,
we must necessarily have
\begin{equation}
\left\langle \delta_{\tilde{g}},\left(\tilde{g}-f\right)\vee\lambda\left(\tilde{g}-h\right)\right\rangle _{L^{2}\left(\mathbb{T}\right)}=0\label{eq:orthog_cond}
\end{equation}
for any $\delta_{\tilde{g}}\in bH^{2}$ (recall that $\tilde{g}=\psi+bg$
and $\delta_{\tilde{g}}=b\delta_{g}$ for $\delta_{g}\in H^{2}$)
which is a condition of tangency of level lines of the minimizing
objective functional and the constraint functional. The condition
(\ref{eq:orthog_cond}) can be shown by the same variational argument
as in the proof of Lemma \ref{lem:saturation}, it must hold true,
otherwise we would be able to improve the minimum while still remaining
in the admissible set. This motivates us to search for $g\in H^{2}$
such that, for $\lambda\in\mathbb{R}$, 
\begin{equation}
\left[\left(\psi+bg-f\right)\vee\lambda\left(\psi+bg-h\right)\right]\in\left(bH^{2}\right)^{\perp}\label{eq:Lagr_cond}
\end{equation}
which is equivalent to 
\begin{equation}
P_{+}\left[\bar{b}\left(\psi+bg-f\right)\vee\lambda\bar{b}\left(\psi+bg-h\right)\right]=0.\label{eq:Lagr_cond_fin}
\end{equation}

\begin{thm}
\label{thm:charact}
If $f\notin\left.\mathcal{A}^{\psi,b}\right|_{I}$, the solution to
the bounded extremal problem (\ref{eq:BEP_main}) is given by
\begin{equation}
g_{0}=\left(1+\mu\phi\right)^{-1}P_{+}\left(\bar{b}\left(f-\psi\right)\vee\left(1+\mu\right)\bar{b}\left(h-\psi\right)\right),\label{eq:g0_sol_fin}
\end{equation}
where the parameter $\mu>-1$ is uniquely chosen such that $\left\Vert \psi+bg_{0}-h\right\Vert _{L^{2}\left(J\right)}=M$. 
\end{thm}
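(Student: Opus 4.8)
\emph{Proof proposal.} The starting point is the first‐order (Lagrange) condition (\ref{eq:Lagr_cond_fin}): as explained just before the statement, any solution $g$ of (\ref{eq:BEP_main}) must satisfy it for some $\lambda\in\mathbb{R}$. The plan is first to turn (\ref{eq:Lagr_cond_fin}) into an explicit linear equation. Since $\bar b b=1$ on $\mathbb{T}$ one has $\bar b(\psi+bg-f)=\bar b\psi+g-\bar bf$ and likewise for the $h$‐term; writing $\chi_I=1-\chi_J$ and setting $\mu:=\lambda-1$, the bracket in (\ref{eq:Lagr_cond_fin}) becomes $(1+\mu\chi_J)(\bar b\psi+g)-\bigl(\bar bf\vee(1+\mu)\bar bh\bigr)$. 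Applying $P_+$, using $P_+g=g$ for $g\in H^2$ and $\phi(g)=P_+(\chi_Jg)$, and moving the $\psi$‐terms to the right (where $(1+\mu\chi_J)\bar b\psi=\bar b\psi\vee(1+\mu)\bar b\psi$ recombines with $\bar bf\vee(1+\mu)\bar bh$), (\ref{eq:Lagr_cond_fin}) is seen to be \emph{equivalent} to
\[
(1+\mu\phi)\,g=P_+\bigl(\bar b(f-\psi)\vee(1+\mu)\bar b(h-\psi)\bigr).
\]
This step is routine bookkeeping.

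Next I would settle invertibility. For $F\in H^2$ one has $\langle\phi F,F\rangle_{L^2(\mathbb{T})}=\frac{1}{2\pi}\int_J|F|^2$, so $\phi$ is self‐adjoint with $0\le\phi\le I$ and its spectrum lies in $[0,1]$; hence for every $\mu>-1$ the operator $1+\mu\phi$ is bounded below by $\min(1,1+\mu)>0$, thus boundedly invertible, and $\mu\mapsto g_\mu:=(1+\mu\phi)^{-1}P_+(\cdots)$ is real‐analytic on $(-1,\infty)$. It remains to know that the multiplier of the \emph{true} solution lies in $(-1,\infty)$, i.e. $\lambda>0$: the value $\lambda=0$ would force $g$ to be a critical point, hence a global minimizer, of the convex functional $\|\psi+bg-f\|_{L^2(I)}^2$ on $H^2$, whose infimum is $0$ by density of traces (Proposition \ref{prop:trace_res1}) and is not attained since $f\notin\left.\mathcal{A}^{\psi,b}\right|_I$; and $\lambda<0$ is excluded by testing (\ref{eq:orthog_cond}) against a feasible direction $b\delta_g$ with $\mathrm{Re}\,\langle b\delta_g,\psi+bg-h\rangle_{L^2(J)}<0$ — such $\delta_g$ exists because $\|\psi+bg-h\|_{L^2(J)}=M>0$ by Lemma \ref{lem:saturation} — and using optimality of $g$. (Alternatively, $\lambda\ge0$ follows from the KKT conditions, Slater's point existing because $M>0$ and $\left.\mathcal{A}^{\psi,b}\right|_J$ is dense.)

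Then I would analyse the scalar function $e(\mu):=\|\psi+bg_\mu-h\|_{L^2(J)}^2$. The key point is that $g_\mu$ is the unique minimizer over $H^2$ of the strictly convex coercive functional $\Phi_\lambda(g):=\|\psi+bg-f\|_{L^2(I)}^2+\lambda\|\psi+bg-h\|_{L^2(J)}^2$ with $\lambda=1+\mu$, since (\ref{eq:Lagr_cond_fin}) is exactly its Euler equation. Comparing the optimality inequalities of $g_{\lambda_1}$ and $g_{\lambda_2}$ for $\lambda_1<\lambda_2$ gives that $e$ is non‐increasing; were it constant on an interval, differentiating (\ref{eq:Lagr_cond_fin}) in $\lambda$ and invoking Remark \ref{rem:H02bar} would force $\psi+bg\equiv h$ on $J$, hence $e\equiv0$ there — so $e$ is \emph{strictly} decreasing wherever it is positive, and it is continuous by analyticity. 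For the range, as $\mu\to+\infty$ one compares $\Phi_\lambda(g_\lambda)$ with $\Phi_\lambda(g_\epsilon)$ for a near‐interpolant $g_\epsilon$ of $h$ on $J$ (density) to get $e(\mu)\le\Phi_\lambda(g_\epsilon)/\lambda\to0$; as $\mu\to-1^+$ one shows $e(\mu)\to+\infty$, for otherwise $\{g_{\mu_k}\}$ with $\mu_k\to-1^+$ would be bounded in $H^2$ — bounded on $J$ by assumption, and on $I$ because $\Phi_{\lambda_k}(g_{\mu_k})\le\Phi_{\lambda_k}(0)$ stays bounded as $\lambda_k\to0$ — and any weak $H^2$‐limit $g_*$ would satisfy $\left.(\psi+bg_*)\right|_I=f$ via $\|\psi+bg_{\mu_k}-f\|_{L^2(I)}^2\le\Phi_{\lambda_k}(g_{\mu_k})\le\Phi_{\lambda_k}(g_\epsilon)$ together with density, contradicting $f\notin\left.\mathcal{A}^{\psi,b}\right|_I$. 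Hence $e:(-1,\infty)\to(0,\infty)$ is a continuous bijection, so there is exactly one $\mu>-1$ with $e(\mu)=M^2$.

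Finally I would assemble: by Theorem \ref{thm:existence} problem (\ref{eq:BEP_main}) has a unique solution $g$; by Lemma \ref{lem:saturation} it saturates the constraint; by the second paragraph it satisfies (\ref{eq:Lagr_cond_fin}) for some $\lambda=1+\mu$ with $\mu>-1$, so the first paragraph identifies $g$ with $g_\mu$, and saturation reads $e(\mu)=M^2$; by the third paragraph this $\mu$ is unique, which yields formula (\ref{eq:g0_sol_fin}). The hard part will be the third paragraph — in particular the blow‐up $e(\mu)\to\infty$ as $\mu\to-1^+$, which needs the density theorem together with a weak‐compactness argument, and the strict monotonicity; the algebraic reduction and the invertibility are essentially mechanical.
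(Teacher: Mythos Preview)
Your argument is correct and takes a genuinely different route from the paper's. The paper proceeds in three steps: it first derives the formula for $h\equiv0$, then for $h\in\left.H^{2}\right|_{J}$ by a translation trick, and finally for general $h\in L^{2}(J)$ by approximating $h$ with a sequence $h_{n}\in\left.H^{2}\right|_{J}$, extracting a weak limit of the corresponding solutions $g_{n}$, and proving through three separate Claims that the multipliers $\mu_{n}$ converge in $(-1,\infty)$, that the weak limit lies in $\mathcal{B}_{M,h}^{\psi,b}$, and that it is optimal. Uniqueness of $\mu$ is not addressed in that proof proper; it is deferred to the monotonicity computation $dM_{0}^{2}/d\mu<0$ in Section~\ref{sec:comp_issues}. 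You instead work directly for arbitrary $h$ by recognising $g_{\mu}$ as the unique minimiser of the Tikhonov functional $\Phi_{\lambda}(g)=\|\psi+bg-f\|_{L^{2}(I)}^{2}+\lambda\|\psi+bg-h\|_{L^{2}(J)}^{2}$ and studying the scalar map $\mu\mapsto\|\psi+bg_{\mu}-h\|_{L^{2}(J)}^{2}$ via comparison of optimal values; this avoids the density--approximation machinery entirely, yields strict monotonicity and the range $(0,\infty)$ in one pass, and thereby proves the uniqueness of $\mu$ inside the same argument. Your approach is shorter and more self-contained; the paper's has the virtue of making the r\^ole of $h$ explicit by reduction to the known case $h=0$. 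Two small remarks: your function $e(\mu)$ is what the paper calls $M_{0}^{2}(\mu)$ (the symbol $e$ is reserved there for the approximation error on $I$, cf.~(\ref{eq:def_e})); and the strict-monotonicity step can be sharpened without differentiation---equality in both comparison inequalities forces $g_{\lambda_{1}}=g_{\lambda_{2}}$ by strict convexity of $\Phi_{\lambda}$, whence subtracting the two instances of (\ref{eq:Lagr_cond_fin}) gives $P_{+}\bigl(0\vee(g_{\lambda_{1}}-\bar b(h-\psi))\bigr)=0$, and Remark~\ref{rem:H02bar} then yields $e=0$ as you indicate.
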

The proof of Theorem \ref{thm:charact} goes in three steps.

\subsection{Solution for the case $h=0$}

For simplicity, we first assume $h=0$. Then, the equation (\ref{eq:Lagr_cond_fin})
can be elaborated as follows
\[
P_{+}\left(\bar{b}\left(\psi+bg\right)\right)+\left(\lambda-1\right)P_{+}\left(0\vee\bar{b}\left(\psi+bg\right)\right)=P_{+}\left(\bar{b}f\vee0\right),
\]
\[
g+P_{+}\left(\bar{b}\psi\right)+\left(\lambda-1\right)P_{+}\left(0\vee\bar{b}\psi\right)+\left(\lambda-1\right)\phi g=P_{+}\left(\bar{b}f\vee0\right),
\]
\begin{equation}
\left(1+\mu\phi\right)g=-P_{+}\left(\bar{b}\left(\psi-f\right)\vee\left(1+\mu\right)\bar{b}\psi\right),\label{eq:g_pre_sol_h_0}
\end{equation}
where we introduced the parameter $\mu:=\lambda-1\in\mathbb{R}$.\\
The Toeplitz operator $\phi$, defined as (\ref{eq:Toepl_op}), is
self-adjoint and, as it can be shown (see the Hartman-Wintner theorem
in Appendix), its spectrum is 
\begin{equation}
\sigma\left(\phi\right)=\left[\text{ess inf }\chi_{J},\,\text{ess sup }\chi_{J}\right]=\left[0,1\right],\label{eq:spectrum}
\end{equation}
hence $\left\| \phi \right\| \leq 1$ and the operator $\left(1+\mu\phi\right)$ is invertible on
$H^{2}$ for $\mu>-1$ allowing to claim that
\begin{equation}
g=-\left(1+\mu\phi\right)^{-1}P_{+}\left(\bar{b}\left(\psi-f\right)\vee\left(1+\mu\right)\bar{b}\psi\right).\label{eq:g_sol_h_0}
\end{equation}
This generalizes the result of \cite{Alpay} to the case when solution
needs to meet pointwise interpolation conditions.

\subsection{Solution for the case $h\neq0,$ $h\in\left.H^{2}\right|_{J}$}

Now, let $h\neq0$, but assume it to be the restriction to $J$ of
some $H^{2}$ function.\\
We write $f=\varrho+\left.\kappa\right|_{I}$ for $\kappa\in H^{2}$
such that $\left.\kappa\right|_{J}=h$. Then, the solution to (\ref{eq:BEP_main})
is
\[
g_{0}=\arg\underset{g\in\mathcal{B}_{M,h}^{\psi,b}}{\text{min}}\left\Vert \psi+bg-f\right\Vert _{L^{2}\left(I\right)}=\arg\underset{g\in\mathcal{\tilde{B}}_{M,0}}{\text{min}}\left\Vert \tilde{\psi}+bg-\varrho\right\Vert _{L^{2}\left(I\right)},
\]
where $\tilde{\psi}:=\psi-\kappa$ and 
\[
\mathcal{\tilde{B}}_{M,0}:=\left\{ g\in H^{2}:\,\left\Vert \tilde{\psi}+bg\right\Vert _{L^{2}\left(J\right)}\leq M\right\} .
\]
It is easy to see that, due to $\left.\kappa\right|_{J}=h$, we have
$\mathcal{\tilde{B}}_{M,0}=\mathcal{B}_{M,h}^{\psi,b}$. Therefore,
the already obtained results (\ref{eq:g_pre_sol_h_0}), (\ref{eq:g_sol_h_0})
apply to yield 
\begin{eqnarray}
\left(1+\mu\phi\right)g_{0} & = & -P_{+}\left(\bar{b}\left(\tilde{\psi}-\varrho\right)\vee\left(1+\mu\right)\bar{b}\tilde{\psi}\right)\nonumber \\
 & = & -P_{+}\left(\bar{b}\left(\psi-\kappa-\varrho\right)\vee\left(1+\mu\right)\bar{b}\left(\psi-\kappa\right)\right)\nonumber \\
 & = & P_{+}\left(\bar{b}\left(f-\psi\right)\vee\left(1+\mu\right)\bar{b}\left(h-\psi\right)\right),\label{eq:g0_pre_sol_fin}
\end{eqnarray}
from where (\ref{eq:g0_sol_fin}) follows.

\subsection{Solution for the case $h\neq0,$ $h\in L^{2}\left(J\right)$}

Here we assume $h\notin\left.H^{2}\right|_{J}$ but only $h\in L^{2}\left(J\right)$.
The result follows from the previous step by density of $\left.H^{2}\right|_{J}$
in $L^{2}\left(J\right)$ along the line of reasoning similar to \cite{Baratchart-Leblond}. 

More precisely, by density (Proposition \ref{prop:trace_res1}), for
a given $h\in L^{2}\left(J\right)$, we have existence of a sequence
$\left\{ h_{n}\right\} _{n=1}^{\infty}\subset\left.H^{2}\right|_{J}$
such that $h_{n}\underset{n\rightarrow\infty}{\rightarrow}h$ in $L^{2}\left(J\right)$.
This generates a sequence of solutions 
\begin{equation}
g_{n}=\arg\underset{g\in\mathcal{B}_{M,h_{n}}}{\text{min}}\left\Vert \psi+bg-f\right\Vert _{L^{2}\left(I\right)},\hspace{1em}n\in\mathbb{N}_{+},\label{eq:g_n_def}
\end{equation}
satisfying 
\begin{equation}
\left(1+\mu_{n}\phi\right)g_{n}=P_{+}\left(\bar{b}\left(f-\psi\right)\vee\left(1+\mu_{n}\right)\bar{b}\left(h_{n}-\psi\right)\right)\label{eq:g_n_presol}
\end{equation}
for $\mu_{n}>-1$ chosen such that $\left\Vert \psi+bg_{n}-h_{n}\right\Vert _{L^{2}\left(J\right)}=M$.\\
Since $\left\{ g_{n}\right\} _{n=1}^{\infty}$ is bounded in $H^{2}$
(by definition of the solution space $\mathcal{B}_{M,h_{n}}^{\psi,b}$), and
due to the Hilbertian setting, up to extraction of a subsequence,
it converges weakly in $L^{2}\left(\mathbb{T}\right)$ norm to some
element in $H^{2}$
\begin{equation}
g_{n}\underset{n\rightarrow\infty}{\rightharpoonup}\gamma\in H^{2}.\label{eq:gn_weak_conv}
\end{equation}
We will first show that $\mu_{n}\rightarrow\mu$ as $n\rightarrow\infty$.
Then, since all $\left(1+\mu\phi\right)$ and $\left(1+\mu_{n}\phi\right)$
are self-adjoint, we have, for any $\xi\in H^{2}$,
\[
\left\langle \left(1+\mu_{n}\phi\right)g_{n},\xi\right\rangle _{L^{2}\left(\mathbb{T}\right)}=\left\langle g_{n},\left(1+\mu_{n}\phi\right)\xi\right\rangle _{L^{2}\left(\mathbb{T}\right)}\underset{n\rightarrow\infty}{\rightarrow}\left\langle \gamma,\left(1+\mu\phi\right)\xi\right\rangle _{L^{2}\left(\mathbb{T}\right)}=\left\langle \left(1+\mu\phi\right)\gamma,\xi\right\rangle _{L^{2}\left(\mathbb{T}\right)},
\]
and thus $\left(1+\mu_{n}\phi\right)g_{n}\underset{n\rightarrow\infty}{\rightharpoonup}\left(1+\mu\phi\right)\gamma$.
Combining this with the convergence 
\[
P_{+}\left(\bar{b}\left(f-\psi\right)\vee\left(1+\mu_{n}\right)\bar{b}\left(h_{n}-\psi\right)\right)\underset{n\rightarrow\infty}{\rightarrow}P_{+}\left(\bar{b}\left(f-\psi\right)\vee\left(1+\mu\right)\bar{b}\left(h-\psi\right)\right)
\]
in $L^{2}\left(\mathbb{T}\right)$, equation (\ref{eq:g_n_presol})
suggests that the weak limit $\gamma$ in (\ref{eq:gn_weak_conv})
is a solution to (\ref{eq:BEP_main}). It will remain to check that
$\gamma\in\mathcal{B}_{M,h}^{\psi,b}$ and is indeed a minimizer of
the cost functional (\ref{eq:BEP_main}).
\begin{claim}
For $\mu_{n}$ in (\ref{eq:g_n_presol}), we have
\end{claim}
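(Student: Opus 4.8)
The statement to be proved is that the sequence of Lagrange parameters $\left\{\mu_{n}\right\}_{n=1}^{\infty}$ appearing in (\ref{eq:g_n_presol}) converges, and the natural claim is $\mu_{n}\rightarrow\mu$ where $\mu$ is the parameter associated with the limiting problem having reference $h$. The plan is to proceed in two stages: first establish that the sequence $\left\{\mu_{n}\right\}$ is bounded (and bounded away from $-1$), so that by Bolzano--Weierstrass it has convergent subsequences; then show that every convergent subsequence has the same limit $\mu$, which forces convergence of the whole sequence.

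For boundedness, the key mechanism is the saturation constraint $\left\Vert\psi+bg_{n}-h_{n}\right\Vert_{L^{2}(J)}=M$ together with the explicit formula (\ref{eq:g_n_presol}). First I would rule out $\mu_{n}\to-1$: as $\mu_{n}\searrow-1$, the operator $(1+\mu_{n}\phi)$ ceases to be boundedly invertible precisely on the part of the spectrum of $\phi$ near $1$, and since the spectrum is $[0,1]$ with $1$ not an eigenvalue (by injectivity of $\phi$, Proposition \ref{prop:phi_inject}, combined with (\ref{eq:spectrum})), one checks that $\left\Vert g_{n}\right\Vert_{H^{2}}$ would have to blow up unless the right-hand side of (\ref{eq:g_n_presol}) is orthogonal to the top of the spectrum — but then the $L^{2}(J)$-norm of $\psi+bg_{n}-h_{n}$ would tend to $0$, contradicting $M>0$ (Corollary \ref{cor:M_pos}). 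For the upper bound $\mu_{n}\to+\infty$, I would use that as $\mu_{n}\to\infty$ the solution $g_{n}$ forces $P_{+}(\bar b(\psi+bg_{n}-h_{n}))\to 0$ in the appropriate sense, i.e. $g_{n}$ approaches the (unconstrained) $H^{2}$-approximant of $h_{n}-\psi$ on $J$; since $\left\{h_{n}\right\}$ is bounded in $L^{2}(J)$ this again collapses the discrepancy to something that cannot equal the fixed $M>0$ for large $n$, provided we are in the genuinely constrained regime $f\notin\left.\mathcal{A}^{\psi,b}\right|_{I}$. A cleaner route is to observe that $\mu\mapsto\left\Vert\psi+bg(\mu)-h\right\Vert_{L^{2}(J)}$, with $g(\mu)$ given by (\ref{eq:g0_sol_fin}), is continuous and strictly monotone in $\mu$ (this monotonicity is the standard fact underlying the well-posedness of the Lagrange-parameter selection), so that $\mu$ depends continuously on the reference datum; since $h_{n}\to h$ in $L^{2}(J)$ and the map $h\mapsto\mu(h)$ is continuous, $\mu_{n}\to\mu$ follows directly.

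Concretely, I would extract a subsequence along which $\mu_{n_{k}}\to\mu^{\star}\in(-1,+\infty)$. Along this subsequence, the right-hand sides of (\ref{eq:g_n_presol}) converge in $L^{2}(\mathbb{T})$ (using $h_{n}\to h$ and $\mu_{n_{k}}\to\mu^{\star}$), and $(1+\mu_{n_{k}}\phi)^{-1}\to(1+\mu^{\star}\phi)^{-1}$ in operator norm (since $\phi$ is self-adjoint with spectrum in $[0,1]$ and $\mu^{\star}>-1$), hence $g_{n_{k}}\to g^{\star}:=(1+\mu^{\star}\phi)^{-1}P_{+}\left(\bar b(f-\psi)\vee(1+\mu^{\star})\bar b(h-\psi)\right)$ \emph{strongly} in $H^{2}$. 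Strong convergence upgrades the weak constraint passage: from $\left\Vert\psi+bg_{n_{k}}-h_{n_{k}}\right\Vert_{L^{2}(J)}=M$ and $h_{n_{k}}\to h$ in $L^{2}(J)$ we get $\left\Vert\psi+bg^{\star}-h\right\Vert_{L^{2}(J)}=M$, which says exactly that $\mu^{\star}$ is the parameter that saturates the constraint for the limiting problem. Since that saturating parameter is unique (by the strict monotonicity of the discrepancy in $\mu$, which is part of the assertion of Theorem \ref{thm:charact}), we conclude $\mu^{\star}=\mu$ independently of the subsequence, whence $\mu_{n}\to\mu$.

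The main obstacle is the boundedness/non-degeneracy step: ruling out $\mu_{n}\to-1$ rigorously requires care because $1\in\sigma(\phi)$ sits at the boundary of invertibility of $(1+\mu_{n}\phi)$, so one must quantify how $\left\Vert(1+\mu_{n}\phi)^{-1}\right\Vert$ grows and pair this against the spectral content of the right-hand side near $1$; the clean way around it is to invoke the strict monotonicity and continuity of the discrepancy function $\mu\mapsto\left\Vert\psi+bg(\mu)-h\right\Vert_{L^{2}(J)}$ on $(-1,\infty)$ — including its limiting behaviour $\to+\infty$ as $\mu\searrow-1$ and its infimum as $\mu\to+\infty$ — which both pins down $\mu$ uniquely and yields the continuous dependence $\mu_{n}=\mu(h_{n})\to\mu(h)=\mu$ in one stroke. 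I expect the bulk of the work to be verifying this monotonicity (differentiating $\left\Vert\psi+bg(\mu)-h\right\Vert_{L^{2}(J)}^{2}$ in $\mu$ and checking the sign via self-adjointness of $\phi$) and its boundary asymptotics, after which the convergence $\mu_{n}\to\mu$ is essentially immediate.
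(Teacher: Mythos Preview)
Your overall strategy---extract a convergent subsequence of $\mu_{n}$, pass to strong limits in the formula, and pin down the limit via uniqueness of the saturating parameter---is different from the paper's, which never invokes the monotonicity of $M_{0}$ but instead works directly with the variational identity
\[
\left\langle \bar{b}(f-\psi)-g_{n},\xi\right\rangle_{L^{2}(I)}=(1+\mu_{n})\left\langle g_{n}-\bar{b}(h_{n}-\psi),\xi\right\rangle_{L^{2}(J)}
\]
and the already-established weak limit $g_{n}\rightharpoonup\gamma$, together with the two standing hypotheses $h\notin\left.H^{2}\right|_{J}$ and $f\notin\left.\mathcal{A}^{\psi,b}\right|_{I}$. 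Those hypotheses are what rule out $\mu_{n}\to\infty$ (one gets $\gamma=\bar{b}(h-\psi)$ on $J$, i.e.\ $h\in\left.H^{2}\right|_{J}$), rule out two distinct subsequential limits (same contradiction), and rule out $\mu=-1$ (one gets $\mathrm{Re}\,\langle f-\psi-b\gamma,\xi\rangle_{L^{2}(I)}\geq 0$ for all $\xi\in H^{2}$, contradicting $f\notin\left.\mathcal{A}^{\psi,b}\right|_{I}$).

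The genuine gap in your proposal is the boundedness step. Your argument against $\mu_{n}\searrow-1$ does not hold: you claim that if $\Vert g_{n}\Vert_{H^{2}}$ stays bounded (which it does, by construction) then the right-hand side of (\ref{eq:g_n_presol}) must avoid the top of the spectrum of $\phi$ and hence $\Vert\psi+bg_{n}-h_{n}\Vert_{L^{2}(J)}\to 0$. But this conclusion is not valid---boundedness of $g_{n}$ as $\mu_{n}\to-1$ only says that the spectral measure of the right-hand side near $\lambda=1$ is suitably small; it does not force $\phi g_{n}\to 0$ nor the $L^{2}(J)$ discrepancy to vanish (indeed the asymptotics in Section~\ref{sec:comp_issues} show the opposite tendency). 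Likewise, your exclusion of $\mu_{n}\to\infty$ never uses the hypothesis $h\notin\left.H^{2}\right|_{J}$, which is exactly what is needed; the vague claim that the discrepancy ``collapses to something that cannot equal $M$'' is not an argument. Finally, appealing to the uniqueness clause of Theorem~\ref{thm:charact} for the limiting parameter is circular---you are inside its proof---so you would indeed have to carry out the monotonicity computation (essentially Proposition~\ref{prop:monot}) in advance. That can be done, but it does not rescue the boundedness step.
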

\begin{equation}
\underset{n\rightarrow\infty}{\lim}\mu_{n}=:\mu\in\left(-1,\infty\right).\label{eq:lim_mu}
\end{equation}

\begin{proof}
We prove this statement by contradiction. 
Because of the relation (\ref{eq:Lagr_cond_fin}), for any $\xi\in H^{2}$,
we have
\begin{equation}
\left\langle \bar{b}\left(f-\psi\right)-g_{n},\xi\right\rangle _{L^{2}\left(I\right)}=\left(1+\mu_{n}\right)\left\langle g_{n}-\bar{b}\left(h_{n}-\psi\right),\xi\right\rangle _{L^{2}\left(J\right)}.\label{eq:Lagr_cond_IJ_gn}
\end{equation}
We note that the weak convergence (\ref{eq:gn_weak_conv}) in $H^{2}$
implies the weak convergence $g_{n}\rightharpoonup\gamma$
in $L^{2}\left(J\right)$ as $n\rightarrow\infty$ since for a given $\eta\in L^{2}\left(J\right)$,
we can take $\xi=P_{+}\left(0\vee\eta\right)\in H^{2}$
in the definition $\underset{n\rightarrow\infty}{\lim}\left\langle g_{n},\xi\right\rangle _{L^{2}\left(\mathbb{T}\right)}=\left\langle \gamma,\xi\right\rangle _{L^{2}\left(\mathbb{T}\right)}$. 

Assume first that $\mu_{n}\underset{n\rightarrow\infty}{\rightarrow}\infty$.
Then, since the left-hand side of (\ref{eq:Lagr_cond_IJ_gn}) remains
bounded as $n\rightarrow\infty$, we necessarily must have 
\[
\underset{n\rightarrow\infty}{\lim}\left\langle g_{n}-\bar{b}\left(h_{n}-\psi\right),\xi\right\rangle _{L^{2}\left(J\right)}=0.
\]
Since $h_{n}\rightarrow h$ in $L^{2}\left(J\right)$ strongly, this
implies that $\gamma=\bar{b}\left(h-\psi\right)\in\left.H^{2}\right|_{J}$
contrary to the initial assumption of the section that $h\notin\left.H^{2}\right|_{J}$.\\
Next, we consider another possibility, namely that the limit $\underset{n\rightarrow\infty}{\lim}\mu_{n}$
does not exist. Then, there are at least two infinite sequences $\left\{ n_{k_{1}}\right\} $,
$\left\{ n_{k_{2}}\right\} $ such that
\[
\lim_{k_{1}\rightarrow\infty}\mu_{n_{k_{1}}}=:\mu^{\left(1\right)}\neq\mu^{\left(2\right)}:=\lim_{k_{2}\rightarrow\infty}\mu_{n_{k_{2}}}.
\]
Since the left-hand side of (\ref{eq:Lagr_cond_IJ_gn}) is independent
of $\mu_{n}$ and both limits $\mu^{\left(1\right)}$, $\mu^{\left(2\right)}$
exist and finite, we have
\[
\lim_{k_{1}\rightarrow\infty}\left(1+\mu_{n_{k_{1}}}\right)\left\langle g_{n_{k_{1}}}-\bar{b}\left(h_{n_{k_{1}}}-\psi\right),\xi\right\rangle _{L^{2}\left(J\right)}=\lim_{k_{2}\rightarrow\infty}\left(1+\mu_{n_{k_{2}}}\right)\left\langle g_{n_{k_{2}}}-\bar{b}\left(h_{n_{k_{2}}}-\psi\right),\xi\right\rangle _{L^{2}\left(J\right)}
\]
\[
\Rightarrow\hspace{1em}\left(\mu^{\left(1\right)}-\mu^{\left(2\right)}\right)\left\langle \gamma-\bar{b}\left(h-\psi\right),\xi\right\rangle _{L^{2}\left(J\right)}=0.
\]
As before, because of $h\notin\left.H^{2}\right|_{J}$, we derive
a contradiction $\mu^{\left(1\right)}=\mu^{\left(2\right)}$.\\
Now that the limit in (\ref{eq:lim_mu}) exists, we have $\mu\geq-1$.
To show $\mu>-1$, assume, by contradiction, that $\mu=-1$. 
Since $g_{n}\in\mathcal{B}_{M,h_{n}}$, for any $\xi\in H^{2}$, the Cauchy-Schwarz inequality
gives 
\[
\text{Re}\left\langle \psi+bg_{n}-h_{n},\xi\right\rangle _{L^{2}\left(J\right)}\geq-M\left\Vert \xi\right\Vert _{L^{2}\left(J\right)},
\]
and hence it follows from (\ref{eq:Lagr_cond_IJ_gn}) (taking real
part and passing to the limit as $n\rightarrow\infty$) that
\[
\underset{=0}{\underbrace{-\left(1+\mu\right)M\left\Vert \xi\right\Vert _{L^{2}\left(J\right)}}}\leq\text{Re}\left\langle f-\psi-b\gamma,\xi\right\rangle _{L^{2}\left(I\right)},
\]
which results in a contradiction since the right-hand side may be made
negative due to the assumption that $f\notin\left.\mathcal{A}^{\psi,b}\right|_{I}$ and to the arbitrary choice of $\xi$, whereas the
left-hand side vanishes by the assumption $\mu=-1$. This finishes
the proof of (\ref{eq:lim_mu}).\end{proof}
\begin{claim}
$\gamma\in\mathcal{B}_{M,h}^{\psi,b}$.\end{claim}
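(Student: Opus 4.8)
The plan is to show that $\left\Vert \psi + b\gamma - h \right\Vert_{L^2(J)} \leq M$ by exploiting the weak convergence $g_n \rightharpoonup \gamma$ in $L^2(\mathbb{T})$, which (as already noted in the excerpt) restricts to weak convergence $g_n \rightharpoonup \gamma$ in $L^2(J)$, together with the strong convergence $h_n \to h$ in $L^2(J)$. First I would observe that $\psi + b g_n - h_n \rightharpoonup \psi + b\gamma - h$ weakly in $L^2(J)$: indeed, for any $\eta \in L^2(J)$ one has $\langle b g_n, \eta \rangle_{L^2(J)} = \langle g_n, \bar{b}\eta \rangle_{L^2(J)} \to \langle \gamma, \bar{b}\eta \rangle_{L^2(J)} = \langle b\gamma, \eta \rangle_{L^2(J)}$ by the established weak convergence in $L^2(J)$ (using that $\bar b \eta \in L^2(J)$), while $\langle h_n, \eta \rangle_{L^2(J)} \to \langle h, \eta \rangle_{L^2(J)}$ by strong convergence.

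Next, I would invoke the standard fact that the norm of a Hilbert space is weakly lower semicontinuous. Since $\psi + b g_n - h_n \rightharpoonup \psi + b\gamma - h$ in $L^2(J)$ and $\left\Vert \psi + b g_n - h_n \right\Vert_{L^2(J)} = M$ for every $n$ (by the choice of $\mu_n$), weak lower semicontinuity yields
\[
\left\Vert \psi + b\gamma - h \right\Vert_{L^2(J)} \leq \liminf_{n \to \infty} \left\Vert \psi + b g_n - h_n \right\Vert_{L^2(J)} = M,
\]
which is exactly the statement that $\gamma \in \mathcal{B}_{M,h}^{\psi,b}$ (recalling that $\gamma \in H^2$ was already secured by the weak limit living in $H^2$, a closed subspace).

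I do not anticipate a serious obstacle here: the only mild subtlety is being careful that $b g_n \rightharpoonup b\gamma$ in $L^2(J)$ rather than merely on all of $\mathbb{T}$, but this follows immediately because multiplication by the bounded function $\bar b|_J$ maps $L^2(J)$ into itself and testing against $\bar b \eta$ is legitimate. One could alternatively argue via the dual characterization of the $L^2(J)$ norm combined with the density of $\left.\bar H^2_0\right|_J$ in $L^2(J)$, mirroring the closedness argument in the proof of Theorem~\ref{thm:existence}, but the weak lower semicontinuity route is shorter and self-contained once the weak convergence of $\psi + b g_n - h_n$ in $L^2(J)$ is in hand.
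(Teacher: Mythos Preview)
Your proof is correct and follows essentially the same route as the paper: establish the weak convergence $\psi + b g_n - h_n \rightharpoonup \psi + b\gamma - h$ in $L^2(J)$ from $g_n \rightharpoonup \gamma$ and $h_n \to h$, then invoke weak lower semicontinuity of the norm. The paper's argument is slightly terser (it does not spell out the $\bar b \eta$ testing step for $b g_n \rightharpoonup b\gamma$), but the structure is identical.
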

\begin{proof}
For $g_{n}\in\mathcal{B}_{M,h_{n}}^{\psi,b}$, we have $\left\Vert \psi+bg_{n}-h_{n}\right\Vert _{L^{2}\left(J\right)}\leq M$.
But $h_{n}\rightarrow h$ in $L^{2}\left(J\right)$, $g_{n}\rightharpoonup\gamma$
in $L^{2}\left(J\right)$ (as discussed in the proof of Claim 1) and
so also $\psi+bg_{n}-h_{n}\rightharpoonup\psi+b\gamma-h$ in $L^{2}\left(J\right)$
as $n\rightarrow\infty$. The claim now is a direct consequence of
the general property of weak limits: 
\begin{equation}
\left\Vert \tilde{g}\right\Vert \leq\underset{n\rightarrow\infty}{\lim\inf}\left\Vert \tilde{g}_{n}\right\Vert \hspace{1em}\text{whenever}\hspace{1em}\tilde{g}_{n}\rightharpoonup\tilde{g}\hspace{1em}\text{as}\hspace{1em}n\rightarrow\infty,\label{eq:weak_limits_prop}
\end{equation}
which follows from taking $\xi=\tilde{g}$ in $\underset{n\rightarrow\infty}{\lim}\left\langle \tilde{g}_{n},\xi\right\rangle =\left\langle \tilde{g},\xi\right\rangle $
and the Cauchy-Schwarz inequality.

\end{proof}
\begin{claim}
$\gamma$ is a minimizer of (\ref{eq:BEP_main}).\end{claim}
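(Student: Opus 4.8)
The plan is to turn the first-order (Lagrange) optimality condition that $\gamma$ already satisfies into a proof of genuine minimality, using the convexity of (\ref{eq:BEP_main}). Recall that the analysis preceding the claims shows $\gamma$ obeys (\ref{eq:g0_pre_sol_fin}), equivalently the orthogonality relation (\ref{eq:orthog_cond}) with $\lambda=1+\mu$, and that Claim~2 gives $\gamma\in\mathcal{B}_{M,h}^{\psi,b}$. For a convex problem such an orthogonality relation is \emph{sufficient} for a minimum once one knows that the constraint is saturated at $\gamma$, i.e.\ $\|\psi+b\gamma-h\|_{L^{2}(J)}=M$ — and this is exactly what Claim~2 does not yet give (it only yields ``$\le M$''). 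So I would proceed in two steps: first establish saturation, then run the convex-sufficiency computation.

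\textbf{Step 1 (saturation at $\gamma$).} Here I would upgrade the weak convergence (\ref{eq:gn_weak_conv}) to strong convergence. By Claim~1, $\mu_{n}\to\mu>-1$, so for large $n$ one has $\mu_{n}\ge-1+\varepsilon$; since $\phi$ is self-adjoint with $\sigma(\phi)\subset[0,1]$, the operators $(1+\mu_{n}\phi)^{-1}$ are uniformly bounded and, by the resolvent identity $(1+\mu_{n}\phi)^{-1}-(1+\mu\phi)^{-1}=(\mu-\mu_{n})(1+\mu_{n}\phi)^{-1}\phi(1+\mu\phi)^{-1}$, converge to $(1+\mu\phi)^{-1}$. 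Since $h_{n}\to h$ in $L^{2}(J)$ and $P_{+}$ is bounded, the right-hand side of (\ref{eq:g_n_presol}) converges strongly in $H^{2}$, whence $g_{n}\to\gamma$ strongly in $H^{2}$, hence in $L^{2}(J)$. Passing to the limit in $\|\psi+bg_{n}-h_{n}\|_{L^{2}(J)}=M$ then gives $\|\psi+b\gamma-h\|_{L^{2}(J)}=M$. (As a by-product, strong convergence re-derives that $\gamma$ satisfies (\ref{eq:g_n_presol}) in the limit, hence is given by (\ref{eq:g0_sol_fin}).)

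\textbf{Step 2 (convex sufficiency).} Fix any $g\in\mathcal{B}_{M,h}^{\psi,b}$, set $\tilde\gamma:=\psi+b\gamma$ and $\delta:=b(g-\gamma)\in bH^{2}$, and expand
\[
\|\psi+bg-f\|_{L^{2}(I)}^{2}=\|\tilde\gamma-f\|_{L^{2}(I)}^{2}+2\,\text{Re}\,\langle\delta,\tilde\gamma-f\rangle_{L^{2}(I)}+\|\delta\|_{L^{2}(I)}^{2}.
\]
The optimality relation (\ref{eq:orthog_cond}) (with $\delta_{\tilde g}=\delta$, $\lambda=1+\mu$) gives $\langle\delta,\tilde\gamma-f\rangle_{L^{2}(I)}=-(1+\mu)\langle\delta,\tilde\gamma-h\rangle_{L^{2}(J)}$. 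Expanding $\|\psi+bg-h\|_{L^{2}(J)}^{2}=\|\tilde\gamma-h+\delta\|_{L^{2}(J)}^{2}$ and using the saturation $\|\tilde\gamma-h\|_{L^{2}(J)}=M$ from Step~1 together with feasibility $\|\psi+bg-h\|_{L^{2}(J)}\le M$ yields $2\,\text{Re}\,\langle\delta,\tilde\gamma-h\rangle_{L^{2}(J)}\le-\|\delta\|_{L^{2}(J)}^{2}\le0$. Since $1+\mu>0$ (Claim~1), the cross term on $I$ is nonnegative, so $\|\psi+bg-f\|_{L^{2}(I)}^{2}\ge\|\psi+b\gamma-f\|_{L^{2}(I)}^{2}$. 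As $g$ is arbitrary and $\gamma\in\mathcal{B}_{M,h}^{\psi,b}$, this shows $\gamma$ solves (\ref{eq:BEP_main}); uniqueness from Theorem~\ref{thm:existence} together with the limiting form of (\ref{eq:g_n_presol}) identifies it with (\ref{eq:g0_sol_fin}), completing the proof of Theorem~\ref{thm:charact}.

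The hard part will be Step~1: the equality constraint $\|\psi+bg_{n}-h_{n}\|_{L^{2}(J)}=M$ is not stable under mere weak convergence, so one really needs the strong convergence $g_{n}\to\gamma$, which is why Claim~1 and the norm-continuity of $\nu\mapsto(1+\nu\phi)^{-1}$ on $(-1,\infty)$ are used. Step~2 is then the standard convex-sufficiency argument, in which the positivity of $1+\mu$ is precisely what makes the sign come out right. (An alternative route, bypassing Step~1, is to prove minimality directly from weak lower semicontinuity of the cost along (\ref{eq:gn_weak_conv}) combined with an approximation of an arbitrary feasible $g$ by functions feasible for $h_{n}$; saturation of $\gamma$ would then follow a posteriori from Lemma~\ref{lem:saturation} since $f\notin\left.\mathcal{A}^{\psi,b}\right|_{I}$.)
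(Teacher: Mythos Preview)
Your proof is correct and takes a genuinely different route from the paper. The paper argues by contradiction: assuming $\gamma$ is not the minimizer, it perturbs the true minimizer $g_{0}$ to some $g_{0}^{\star}$ lying \emph{strictly} inside $\mathcal{B}_{M,h}^{\psi,b}$, so that $g_{0}^{\star}\in\mathcal{B}_{M,h_{n}}^{\psi,b}$ for large $n$; weak lower semicontinuity of the cost then forces $g_{0}^{\star}$ to beat $g_{n}$ on $I$, contradicting the optimality of $g_{n}$. This is essentially the alternative route you sketch in your last paragraph, and it never upgrades the weak convergence~(\ref{eq:gn_weak_conv}) nor establishes saturation of $\gamma$ directly (saturation is recovered only a posteriori via Lemma~\ref{lem:saturation}).

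Your approach is more direct and arguably cleaner: you observe that Claim~1 together with the resolvent identity actually gives \emph{strong} convergence $g_{n}\to\gamma$ in $H^{2}$, from which saturation $\|\psi+b\gamma-h\|_{L^{2}(J)}=M$ is immediate; then the standard KKT-type sufficiency computation (Step~2), driven by the already-established first-order condition~(\ref{eq:g0_pre_sol_fin}) and the sign $1+\mu>0$, finishes the job without any contradiction argument. What your approach buys is transparency---it makes explicit that the Lagrange condition is sufficient for this convex problem---and it renders the passage through subsequences and weak limits unnecessary in hindsight. What the paper's approach buys is robustness: it uses only weak convergence and would still work in settings where the solution formula is less explicit or the operator continuity less accessible.
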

\begin{proof}
Since $\gamma\in\mathcal{B}_{M,h}^{\psi,b}$ and $g_{0}$ is a minimizer
of (\ref{eq:BEP_main}), we have
\[
\left\Vert \psi+bg_{0}-f\right\Vert _{L^{2}\left(I\right)}\leq\left\Vert \psi+b\gamma-f\right\Vert _{L^{2}\left(I\right)}.
\]
To deduce the equality, by contradiction, we assume the strict inequality,
or equivalently
\begin{equation}
\left\Vert \psi+bg_{0}-f\right\Vert _{L^{2}\left(I\right)}\leq\left\Vert \psi+b\gamma-f\right\Vert _{L^{2}\left(I\right)}-\xi\label{eq:gamma_assump}
\end{equation}
for some $\xi>0$. We want to show that this inequality would lead
to a contradiction between optimality of solutions $g_{0}\in\mathcal{B}_{M,h}^{\psi,b}$
and $g_{n}\in\mathcal{B}_{M,h_{n}}^{\psi,b}$ for sufficiently large
$n$.

First of all, there exists $g_{0}^{\star}\in\mathcal{B}_{M,h}^{\psi,b}$
and $\tau>0$ such that 
\begin{equation}
\left\Vert \psi+bg_{0}-f\right\Vert _{L^{2}\left(I\right)}=\left\Vert \psi+bg_{0}^{\star}-f\right\Vert _{L^{2}\left(I\right)}-\tau\label{eq:g0_star_ineq_I}
\end{equation}
and $\left\Vert \psi+bg_{0}^{\star}-h\right\Vert _{L^{2}\left(J\right)}<M$.
Indeed, take $g_{0}^{\star}=g_{0}+\epsilon\delta_{g}$ with $\delta_{g}\in H^{2}$,
$\left\Vert \delta_{g}\right\Vert _{H^{2}}=1$ such that

\begin{equation}
\text{Re}\left\langle \psi+bg_{0}-h,b\delta_{g}\right\rangle _{L^{2}\left(J\right)}<0.\label{eq:dg_J_assump}
\end{equation}
Then, since $\left\Vert \psi+bg_{0}-h\right\Vert _{L^{2}\left(J\right)}=M$
(according to Lemma \ref{lem:saturation}), we have 
\[
\left\Vert \psi+bg_{0}^{\star}-h\right\Vert _{L^{2}\left(J\right)}^{2}=\left\Vert \psi+bg_{0}-h\right\Vert _{L^{2}\left(J\right)}^{2}+2\epsilon\text{Re}\left\langle \psi+bg_{0}-h,b\delta_{g}\right\rangle _{L^{2}\left(J\right)}+\epsilon^{2}\left\Vert \delta_{g}\right\Vert _{L^{2}\left(J\right)}^{2}=M^{2}-\eta_{0}
\]
with $\eta_{0}:=-2\epsilon\text{Re}\left\langle \psi+bg_{0}-h,b\delta_{g}\right\rangle _{L^{2}\left(J\right)}-\epsilon^{2}\left\Vert \delta_{g}\right\Vert _{L^{2}\left(J\right)}^{2}>0$
for sufficiently small $\epsilon>0$, that is
\begin{equation}
\left\Vert \psi+bg_{0}^{\star}-h\right\Vert _{L^{2}\left(J\right)}=M-\eta,\hspace{1em}\eta:=\dfrac{\eta_{0}}{\left\Vert \psi+bg_{0}^{\star}-h\right\Vert _{L^{2}\left(J\right)}+M}>0.\label{eq:g0_star_ineq_J}
\end{equation}
Now we consider 
\begin{eqnarray*}
\left\Vert \psi+bg_{0}^{\star}-f\right\Vert _{L^{2}\left(I\right)}^{2} & = & \left\Vert \psi+bg_{0}-f\right\Vert _{L^{2}\left(I\right)}^{2}+2\epsilon\text{Re}\left\langle \psi+bg_{0}-f,b\delta_{g}\right\rangle _{L^{2}\left(I\right)}+\epsilon^{2}\left\Vert \delta_{g}\right\Vert _{L^{2}\left(I\right)}^{2}
\end{eqnarray*}
and note that the optimality condition (\ref{eq:Lagr_cond_fin}) implies
\[
\left\langle \bar{b}\left(f-\psi\right)-g_{0},\delta_{g}\right\rangle _{L^{2}\left(I\right)}=\left(1+\mu\right)\left\langle g_{0}-\bar{b}\left(h-\psi\right),\delta_{g}\right\rangle _{L^{2}\left(J\right)}
\]
\[
\Rightarrow\hspace{1em}\text{Re}\left\langle \psi+bg_{0}-f,b\delta_{g}\right\rangle _{L^{2}\left(I\right)}=-\left(1+\mu\right)\text{Re}\left\langle \psi+bg_{0}-h,b\delta_{g}\right\rangle _{L^{2}\left(J\right)}>0,
\]
where $\mu>-1$ is the Lagrange parameter for the solution $g_{0}$.
Therefore,
\[
\left\Vert \psi+bg_{0}^{\star}-f\right\Vert _{L^{2}\left(I\right)}^{2}=\left\Vert \psi+bg_{0}-f\right\Vert _{L^{2}\left(I\right)}^{2}+\tau_{0}
\]
with $\tau_{0}:=-2\left(1+\mu\right)\text{Re}\left\langle \psi+bg_{0}-h,b\delta_{g}\right\rangle _{L^{2}\left(J\right)}+\epsilon^{2}\left\Vert \delta_{g}\right\Vert _{L^{2}\left(I\right)}^{2}>0$
for small enough $\epsilon$, and so (\ref{eq:g0_star_ineq_I}) follows
with
\begin{equation}
\tau:=\dfrac{\tau_{0}}{\left\Vert \psi+bg_{0}^{\star}-f\right\Vert _{L^{2}\left(I\right)}+\left\Vert \psi+bg_{0}-f\right\Vert _{L^{2}\left(I\right)}}>0.\label{eq:tau_def}
\end{equation}

Now it is easy to see that for large enough $n$, we also have $g_{0}^{\star}\in\mathcal{B}_{M,h_{n}}^{\psi,b}$.
Since $h_{n}\rightarrow h$ in $L^{2}\left(J\right)$ as $n\rightarrow\infty$,
there exists $N_{1}\in\mathbb{N}_{+}$ such that $\left\Vert h-h_{n}\right\Vert _{L^{2}\left(J\right)}<\eta$
whenever $n>N_{1}$, so from (\ref{eq:g0_star_ineq_J}), we deduce
the bound
\begin{equation}
\left\Vert \psi+bg_{0}^{\star}-h_{n}\right\Vert _{L^{2}\left(J\right)}\leq\left\Vert \psi+bg_{0}^{\star}-h\right\Vert _{L^{2}\left(J\right)}+\left\Vert h-h_{n}\right\Vert _{L^{2}\left(J\right)}\leq M.\label{eq:g0_star_in_B_h_n}
\end{equation}

On the other hand, by the property of weak limits (\ref{eq:weak_limits_prop}), we have

\[
\underset{n\rightarrow\infty}{\lim\inf}\left\Vert \psi+bg_{n}-f\right\Vert _{L^{2}\left(I\right)}\geq\left\Vert \psi+b\gamma-f\right\Vert _{L^{2}\left(I\right)},
\]
that is, for any given $\rho>0$, 
\begin{equation}
\left\Vert \psi+bg_{n}-f\right\Vert _{L^{2}\left(I\right)}>\left\Vert \psi+b\gamma-f\right\Vert _{L^{2}\left(I\right)}-\rho\label{eq:g_n_on_I}
\end{equation}
holds when $n$ is taken large enough. In particular, there is $N_{2}\in\mathbb{N}_{+}$
such that (\ref{eq:g_n_on_I}) holds for $n\geq N_{2}$ with $\rho=\tau$.
Then, for any $n\geq\max\left\{ N_{1},N_{2}\right\} $, (\ref{eq:g_n_on_I})
can be combined with (\ref{eq:gamma_assump}) and (\ref{eq:g0_star_ineq_I})
to give
\[
\left\Vert \psi+bg_{n}-f\right\Vert _{L^{2}\left(I\right)}>\left\Vert \psi+bg_{0}^{\star}-f\right\Vert _{L^{2}\left(I\right)}+\xi-2\tau.
\]
According to (\ref{eq:tau_def}), $\tau$ can be made arbitrarily
small by the choice of $\delta_{g}$ and $\epsilon$ whereas $\xi$
is a fixed number. Therefore, we have $\left\Vert \psi+bg_{0}^{\star}-f\right\Vert _{L^{2}\left(I\right)}<\left\Vert \psi+bg_{n}-f\right\Vert _{L^{2}\left(I\right)}$
and $g_{0}^{\star}\in\mathcal{B}_{M,h_{n}}^{\psi,b}$ (according to (\ref{eq:g0_star_in_B_h_n})).
In other words, $g_{0}^{\star}$ gives a better solution than $g_{n}$,
and hence, by  uniqueness (Theorem \ref{thm:existence}), we get
a contradiction to the minimality of $g_{n}$ in (\ref{eq:g_n_def}).\end{proof}
\begin{rem}
\label{rem:constr_dropout}As it is mentioned in the formulation of
Theorem \ref{thm:charact}, for $g_{0}$ to be a solution to (\ref{eq:BEP_main}),
the Lagrange parameter $\mu$ has yet to be chosen such that $g_{0}$ given
by (\ref{eq:g0_sol_fin}) satisfies the constraint $\left\Vert \psi+bg_{0}-h\right\Vert _{L^{2}\left(J\right)}=M$, which makes the well-posedness (regularization) effective, see Proposition \ref{prop:trace_res2} and discussion in the beginning of Section \ref{sec:comp_issues}.

We note that the formal substitution $\mu=-1$ in (\ref{eq:g0_pre_sol_fin})
leaves out the constraint on $J$ and leads to the situation $\left.g\right|_{I}=\bar{b}\left(f-\psi\right)$
that was ruled out initially by the requirement (\ref{eq:g_cond}).
\end{rem}
When $f\in\left.\mathcal{A}^{\psi,b}\right|_{I}$, we face an extrapolation
problem of holomorphic extension from $I$ inside the disk preserving
interior pointwise data. In such a case, $\bar{b}\left(f-\psi\right)\in\left.H^{2}\right|_{I}$
and Proposition \ref{prop:Carleman} (or alternative scheme from \cite{Patil} mentioned in Remark \ref{rem:Patil_ref})
applies to construct the extension $g_{0}$ such that $\left.g_{0}\right|_{I}=\bar{b}\left(f-\psi\right)$
which can be regarded as the solution if we give up the control on
$J$ which means that for a given $h$ the parameter $M$ should
be relaxed (yet remaining finite) to avoid an overdetermined problem.
Otherwise, keeping the original bound $M$, despite $f\in\left.\mathcal{A}^{\psi,b}\right|_{I}$,
we must accept non-zero minimum of the cost functional of the problem
in which case the solution $g_{0}$ is still given by (\ref{eq:g0_sol_fin})
which proof is valid since now $\left.g_{0}\right|_{I}\neq\bar{b}\left(f-\psi\right)$.
The latter situation, from geometrical point of view, is nothing but
finding a projection of $f\in\left.\mathcal{A}^{\psi,b}\right|_{I}$
onto the convex subset ${\cal C}_{M,h}^{\psi,b}\subseteq\left.\mathcal{A}^{\psi,b}\right|_{I}$.

However, returning back to the realistic case, when $f\in L^{2}\left(I\right)\backslash\left.\mathcal{A}^{\psi,b}\right|_{I}$,
the solution to (\ref{eq:BEP_main}) can still be written in an integral
form in spirit of the Carleman's formula (\ref{eq:Carleman}) as given
by the following result (see also \cite{Baratchart-Leblond} where
it was stated for the case $\psi\equiv0$, $b\equiv1$).
\begin{prop}
For $\mu\in\left(-1,0\right)$, the solution (\ref{eq:g0_sol_fin})
can be represented as

\begin{equation}
g_{0}\left(z\right)=\frac{1}{2\pi i}\int_{\mathbb{T}}\left(\dfrac{\Phi\left(\xi\right)}{\Phi\left(z\right)}\right)^{\alpha}\left(\bar{b}\left(f-\psi\right)\vee\bar{b}\left(h-\psi\right)\right)\left(\xi\right)\dfrac{d\xi}{\xi-z},\hspace{1em}z\in\mathbb{D},\label{eq:g_sol_int}
\end{equation}
where 
\begin{equation}
\Phi\left(z\right)=\exp\left\{ \frac{\log\rho}{2\pi i}\int_{I}\dfrac{\xi+z}{\xi-z}d\xi\right\} ,\hspace{1em}\alpha=-\frac{\log\left(1+\mu\right)}{2\log\rho},\hspace{1em}\rho>1.\label{eq:quench_func}
\end{equation}
\end{prop}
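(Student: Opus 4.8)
The plan is to show that the function $G(z)$ defined by the right-hand side of (\ref{eq:g_sol_int}) satisfies the very equation that characterizes $g_{0}$ by (\ref{eq:g0_sol_fin}), namely
\begin{equation*}
\left(1+\mu\phi\right)G=P_{+}\left(\bar{b}\left(f-\psi\right)\vee\left(1+\mu\right)\bar{b}\left(h-\psi\right)\right),
\end{equation*}
and then to conclude $G=g_{0}$ from the invertibility of $1+\mu\phi$ on $H^{2}$ for $\mu>-1$ (already available from the spectrum (\ref{eq:spectrum})). Write $F:=\bar{b}\left(f-\psi\right)\vee\bar{b}\left(h-\psi\right)\in L^{2}\left(\mathbb{T}\right)$. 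The hypothesis $\mu\in\left(-1,0\right)$ gives $\alpha>0$, so that $\Phi,\Phi^{\alpha},\Phi^{-\alpha}\in H^{\infty}$: indeed $\log\left|\Phi\right|$ is the Poisson integral of $\left(\log\rho\right)\chi_{I}$, whence $1<\left|\Phi\right|<\rho$ in $\mathbb{D}$, while $\left|\Phi\right|=\rho$ a.e.\ on $I$ and $\left|\Phi\right|=1$ a.e.\ on $J$ by Fatou's theorem; moreover $\left|\Phi^{\alpha}\right|$ is bounded on $\mathbb{T}$, so $\Phi^{\alpha}F\in L^{2}\left(\mathbb{T}\right)$.

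First I would recast $G$ using the Cauchy transform identity $\frac{1}{2\pi i}\int_{\mathbb{T}}\frac{\varphi\left(\xi\right)}{\xi-z}\,d\xi=\left(P_{+}\varphi\right)\left(z\right)$, valid for $\varphi\in L^{2}\left(\mathbb{T}\right)$ and $z\in\mathbb{D}$ (immediate from the Fourier expansion of the Cauchy kernel as in Section \ref{sec:intro_Hardy}). Pulling $\Phi\left(z\right)^{-\alpha}$ out of the integral in (\ref{eq:g_sol_int}) gives $G\left(z\right)=\Phi\left(z\right)^{-\alpha}\bigl(P_{+}\left(\Phi^{\alpha}F\right)\bigr)\left(z\right)$; since $\Phi^{-\alpha}\in H^{\infty}$ and $P_{+}\left(\Phi^{\alpha}F\right)\in H^{2}$, this shows $G\in H^{2}$. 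On $\mathbb{T}$, decomposing $\Phi^{\alpha}F=P_{+}\left(\Phi^{\alpha}F\right)+v$ with $v:=P_{-}\left(\Phi^{\alpha}F\right)\in\bar{H}_{0}^{2}$ and using $\Phi^{-\alpha}\Phi^{\alpha}\equiv1$ a.e., one obtains $G=F-\Phi^{-\alpha}v$ a.e.\ on $\mathbb{T}$. Hence $G-\bar{b}\left(f-\psi\right)=-\Phi^{-\alpha}v$ on $I$ and $G-\bar{b}\left(h-\psi\right)=-\Phi^{-\alpha}v$ on $J$, so that
\begin{equation*}
\left(G-\bar{b}\left(f-\psi\right)\right)\big|_{I}\vee\left(1+\mu\right)\left(G-\bar{b}\left(h-\psi\right)\right)\big|_{J}=-\Phi^{-\alpha}v\,\left(\chi_{I}+\left(1+\mu\right)\chi_{J}\right).
\end{equation*}

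The crux is the algebraic identity $\chi_{I}+\left(1+\mu\right)\chi_{J}=\left(1+\mu\right)\left|\Phi\right|^{2\alpha}$ on $\mathbb{T}$: since the choice of $\alpha$ in (\ref{eq:quench_func}) yields $\rho^{2\alpha}=\left(1+\mu\right)^{-1}$, both sides equal $1$ a.e.\ on $I$ and $1+\mu$ a.e.\ on $J$. Substituting this and $\left|\Phi\right|^{2\alpha}=\Phi^{\alpha}\overline{\Phi^{\alpha}}$ on $\mathbb{T}$, the displayed function becomes $-\left(1+\mu\right)\overline{\Phi^{\alpha}}\,v$. Now $\Phi^{\alpha}\in H^{2}$ has Fourier coefficients supported in $\{k\geq0\}$, so $\overline{\Phi^{\alpha}}$ is supported in $\{k\leq0\}$, while $v\in\bar{H}_{0}^{2}$ is supported in $\{k\leq-1\}$; hence $\overline{\Phi^{\alpha}}v$ is supported in $\{k\leq-1\}$ and, being in $L^{2}$ (as $\Phi^{\alpha}$ is bounded), lies in $\bar{H}_{0}^{2}$, so that $P_{+}$ annihilates it. Consequently $P_{+}\bigl[\left(G-\bar{b}\left(f-\psi\right)\right)|_{I}\vee\left(1+\mu\right)\left(G-\bar{b}\left(h-\psi\right)\right)|_{J}\bigr]=0$. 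Expanding the bracket as $G+\mu\chi_{J}G-\bigl(\bar{b}\left(f-\psi\right)\vee\left(1+\mu\right)\bar{b}\left(h-\psi\right)\bigr)$ and using $G\in H^{2}$, so that $P_{+}G=G$ and $P_{+}\left(\chi_{J}G\right)=\phi\left(G\right)$, we get exactly $\left(1+\mu\phi\right)G=P_{+}\bigl(\bar{b}\left(f-\psi\right)\vee\left(1+\mu\right)\bar{b}\left(h-\psi\right)\bigr)$; since $1+\mu\phi$ is invertible on $H^{2}$, $G=g_{0}$, which is (\ref{eq:g_sol_int}).

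I expect the main work to be boundary-value bookkeeping rather than any deep point: one must justify that the $H^{2}$ function $G=\Phi^{-\alpha}\,P_{+}\left(\Phi^{\alpha}F\right)$ really has a.e.\ boundary trace $F-\Phi^{-\alpha}v$ — the product of the boundary traces of $\Phi^{-\alpha}\in H^{\infty}$ and of $P_{+}\left(\Phi^{\alpha}F\right)\in H^{2}$, together with $\Phi^{-\alpha}\Phi^{\alpha}\equiv1$ a.e. — and that the modulus relations $\left|\Phi\right|=\rho$ a.e.\ on $I$, $\left|\Phi\right|=1$ a.e.\ on $J$ and $1<\left|\Phi\right|<\rho$ in $\mathbb{D}$ hold (Fatou applied to the Poisson integral of $\left(\log\rho\right)\chi_{I}$). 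Everything else reduces to the one-line identity for $\chi_{I}+\left(1+\mu\right)\chi_{J}$ and the Fourier-support observation. Note that the hypothesis $\mu\in\left(-1,0\right)$ is precisely what makes $\alpha>0$, hence $\Phi^{\pm\alpha}\in H^{\infty}$ with the normalization $\rho>1$ of (\ref{eq:quench_func}).
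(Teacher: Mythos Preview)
Your proof is correct and follows essentially the same route as the paper's: both start from the Cauchy-integral side, rewrite it as $\Phi^{-\alpha}P_{+}(\Phi^{\alpha}F)$, split off $v=P_{-}(\Phi^{\alpha}F)$, use the identity $|\Phi|^{2\alpha}=\rho^{2\alpha}\vee1=(1+\mu)^{-1}\vee1$ coming from the choice of $\alpha$, and observe that $\overline{\Phi^{\alpha}}v$ is annihilated by $P_{+}$ to recover the defining equation $(1+\mu\phi)G=P_{+}\bigl(\bar{b}(f-\psi)\vee(1+\mu)\bar{b}(h-\psi)\bigr)$. The only cosmetic difference is that the paper phrases the vanishing of $P_{+}(\overline{\Phi^{\alpha}}v)$ via the inner-product identity $\langle\overline{\Phi^{\alpha}}v,u\rangle=\langle v,\Phi^{\alpha}u\rangle=0$ for $u\in H^{2}$, whereas you use the equivalent Fourier-support/multiplier argument.
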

\begin{proof}
First of all, we note that (\ref{eq:quench_func}) is a quenching
function satisfying $\left|\Phi\right|=\rho\vee1$ on $\mathbb{T}$
and $\left|\Phi\right|>1$ on $\mathbb{D}$ which can be constructed
following the recipe of Remark \ref{rem:quench_choice}. The condition
$\left|\Phi\right|>1$ on $\mathbb{D}$ and the minimum modulus principle
for analytic functions imply the requirement $\rho>1$.\\
To show the equivalence, one can start from (\ref{eq:g_sol_int})
and arrive at (\ref{eq:g0_sol_fin}) for a suitable choice of the
parameters. Indeed, since $\Phi\in H^{\infty}$, (\ref{eq:g_sol_int})
implies
\[
\Phi^{\alpha}g_{0}=P_{+}\left[\Phi^{\alpha}\left(\bar{b}\left(f-\psi\right)\vee\bar{b}\left(h-\psi\right)\right)\right]
\]
\[
\Rightarrow\hspace{1em}P_{+}\left(\left|\Phi\right|^{2\alpha}g_{0}\right)=P_{+}\bigl(\bar{\Phi}^{\alpha}P_{+}\left[\Phi^{\alpha}\left(\bar{b}\left(f-\psi\right)\vee\bar{b}\left(h-\psi\right)\right)\right]\bigr).
\]
We can represent 
\[
P_{+}\left[\Phi^{\alpha}\left(\bar{b}\left(f-\psi\right)\vee\bar{b}\left(h-\psi\right)\right)\right]=\Phi^{\alpha}\left(\bar{b}\left(f-\psi\right)\vee\bar{b}\left(h-\psi\right)\right)-P_{-}\left[\Phi^{\alpha}\left(\bar{b}\left(f-\psi\right)\vee\bar{b}\left(h-\psi\right)\right)\right]
\]
with $P_{-}$ being anti-analytic projection defined in Section \ref{sec:intro_Hardy}.
Since 
\begin{eqnarray*}
\left\langle \bar{\Phi}^{\alpha}P_{-}\left[\Phi^{\alpha}\left(\bar{b}\left(f-\psi\right)\vee\bar{b}\left(h-\psi\right)\right)\right],u\right\rangle _{L^{2}\left(\mathbb{T}\right)} & = & \left\langle P_{-}\left[\Phi^{\alpha}\left(\bar{b}\left(f-\psi\right)\vee\bar{b}\left(h-\psi\right)\right)\right],\Phi^{\alpha}u\right\rangle _{L^{2}\left(\mathbb{T}\right)}=0
\end{eqnarray*}
 for any $u\in H^{2}$, it follows that $P_{+}\bigl(\bar{\Phi}^{\alpha}P_{-}\left[\Phi^{\alpha}\left(\bar{b}\left(f-\psi\right)\vee\bar{b}\left(h-\psi\right)\right)\right]\bigr)=0$
and so we deduce

\[
P_{+}\left(\left|\Phi\right|^{2\alpha}g_{0}\right)=P_{+}\left[\left|\Phi\right|^{2\alpha}\left(\bar{b}\left(f-\psi\right)\vee\bar{b}\left(h-\psi\right)\right)\right].
\]
Given $\rho>1$, choose $\alpha>0$ such that $\rho^{2\alpha}=\dfrac{1}{1+\mu}$
(this restricts the range $\mu>-1$ to $\mu\in\left(-1,0\right)$).
Then, $\left.\left|\Phi\right|^{2\alpha}\right|_{I}=\dfrac{1}{1+\mu}$,
$\left.\left|\Phi\right|^{2\alpha}\right|_{J}=1$, and hence
\[
P_{+}\left(\dfrac{1}{1+\mu}g_{0}\vee g_{0}\right)=P_{+}\left(\dfrac{\bar{b}}{1+\mu}\left(f-\psi\right)\vee\bar{b}\left(h-\psi\right)\right)
\]
\[
\Rightarrow\hspace{1em}P_{+}\left(g_{0}\vee g_{0}\right)+\mu P_{+}\left(0\vee g_{0}\right)=P_{+}\left(\bar{b}\left(f-\psi\right)\vee\left(1+\mu\right)\bar{b}\left(h-\psi\right)\right),
\]
which directly furnishes (\ref{eq:g0_sol_fin}).
\end{proof}
\newpage
\section{\label{sec:psi_choice}Choice of interpolation function and solution
reduction}

Before we proceed with computational aspects, it is worth discussing
the choice of interpolant $\psi$ which up to this point was any $H^{2}$
function satisfying the interpolation conditions (\ref{eq:meas_cond}). 

We will first consider a particular choice of the interpolant following
\cite{Schvedenko} and then discuss the general case.
\begin{prop}
\label{prop:interp_construction}The $H^2$ function defined for $z \in \mathbb{D}$ by
\begin{equation}
\psi\left(z\right)=\sum_{k=1}^{N}\psi_{k}\mathcal{K}\left(z_{k},z\right)\hspace{1em}\text{with}\hspace{1em}\mathcal{K}\left(z_{k},z\right):=\frac{1}{1-\bar{z}_{k}z}\label{eq:interp_expans}
\end{equation}
interpolates the data \eqref{eq:meas_cond} for an appropriate choice of the constants
$\left\{ \psi_{k}\right\} _{k=1}^{N}\in\mathbb{C}$ which exists regardless
of a priori prescribed values $\left\{ \omega_{k}\right\} _{k=1}^{N}$
and choice of the points $\left\{ z_{k}\right\} _{k=1}^{N}$ (providing they are all different). Moreover, it is the unique interpolant
of minimal $H^{2}$ norm.\end{prop}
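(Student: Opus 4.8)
The plan is to recognize $\mathcal{K}(z_k,\cdot)$ as the Szeg\H{o} reproducing kernel of $H^{2}$ and to turn the interpolation requirement into a finite linear system. First I would record the reproducing identity: from $\mathcal{K}(w,z)=\sum_{n\ge0}\bar w^{\,n}z^{n}$ and the inner product \eqref{eq:Hp_innerprod} one gets $\langle F,\mathcal{K}(w,\cdot)\rangle_{L^{2}(\mathbb{T})}=F(w)$ for all $F\in H^{2}$, $w\in\mathbb{D}$; in particular each $\mathcal{K}(z_k,\cdot)$ lies in $H^{\infty}\subset H^{2}$, so the function $\psi$ defined by \eqref{eq:interp_expans} belongs to $H^{2}$ for any coefficients $\psi_{k}$. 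Plugging this $\psi$ into \eqref{eq:meas_cond} and evaluating via the reproducing identity, the interpolation conditions become the $N\times N$ linear system $\sum_{k=1}^{N}G_{jk}\psi_{k}=\omega_{j}$, $j=1,\dots,N$, where $G_{jk}=\mathcal{K}(z_k,z_j)=(1-\bar z_{k}z_{j})^{-1}=\langle\mathcal{K}(z_k,\cdot),\mathcal{K}(z_j,\cdot)\rangle_{L^{2}(\mathbb{T})}$; thus $G$ is (a conjugate of) the Gram matrix of the family $\{\mathcal{K}(z_k,\cdot)\}_{k=1}^{N}$.

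The heart of the matter is that $G$ is invertible, equivalently that $\mathcal{K}(z_1,\cdot),\dots,\mathcal{K}(z_N,\cdot)$ are linearly independent whenever the $z_k$ are pairwise distinct. I would prove this directly: if $\sum_{k}c_{k}\mathcal{K}(z_k,\cdot)\equiv0$ on $\mathbb{D}$, then the rational function $z\mapsto\sum_{k}c_{k}(1-\bar z_{k}z)^{-1}$ vanishes identically on $\mathbb{C}$; for each index $k$ with $z_{k}\neq0$ it would have residue $-c_{k}/\bar z_{k}$ at the simple pole $1/\bar z_{k}$ (and these poles are pairwise distinct), forcing $c_{k}=0$, while the term attached to the unique index with $z_{k}=0$, if present, is the constant $c_{k}$ and must vanish once the remaining coefficients do. (Alternatively, one may invoke the Cauchy determinant formula, which is nonzero here since $|\bar z_{j}z_{k}|<1$.) Hence $G\vec\psi=\vec\omega$ has a unique solution $(\psi_{1},\dots,\psi_{N})$ for every prescribed data $\{\omega_{k}\}$ and every admissible configuration of distinct points, which establishes the first assertion. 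I expect this linear-independence step to be the only genuinely non-formal ingredient, and thus the main (mild) obstacle.

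It remains to prove minimality, which I would do by Hilbert-space geometry. Set $\mathcal{M}:=\{F\in H^{2}:F(z_j)=0,\ j=1,\dots,N\}=\bigcap_{j=1}^{N}\ker\,\mathrm{ev}_{z_j}$. Each evaluation $\mathrm{ev}_{z_j}=\langle\cdot,\mathcal{K}(z_j,\cdot)\rangle$ is a bounded linear functional on $H^{2}$, so $\mathcal{M}$ is a closed subspace and $\mathcal{M}^{\perp}=\mathrm{span}\{\mathcal{K}(z_j,\cdot):j=1,\dots,N\}$, which is finite-dimensional, hence closed. The set of all $H^{2}$ functions satisfying \eqref{eq:meas_cond} is the affine subspace $\psi+\mathcal{M}$, which is nonempty since $\psi$ itself belongs to it; by the orthogonal decomposition $H^{2}=\mathcal{M}\oplus\mathcal{M}^{\perp}$ this affine subspace possesses exactly one element of minimal $H^{2}$ norm, namely its unique element contained in $\mathcal{M}^{\perp}$. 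Since the function $\psi$ built in the first step lies in $\mathcal{M}^{\perp}$ by the very form \eqref{eq:interp_expans} and simultaneously satisfies \eqref{eq:meas_cond}, it is precisely that element; therefore $\psi$ is the unique interpolant of minimal $H^{2}$ norm, which completes the proof.
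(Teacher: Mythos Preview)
Your proof is correct and follows the same overall route as the paper: identify $\mathcal{K}$ as the reproducing kernel, reduce the interpolation conditions to invertibility of the Gram matrix via linear independence of the kernels, and deduce minimality from the orthogonal decomposition $H^{2}=\mathcal{M}\oplus\mathcal{M}^{\perp}$ with $\mathcal{M}^{\perp}=\mathrm{span}\{\mathcal{K}(z_j,\cdot)\}$. The only difference is in the linear-independence step: the paper expands each kernel as a power series to obtain the Vandermonde-type relations $\sum_{k}c_{k}\bar z_{k}^{\,n}=0$ for all $n\ge0$, while you argue via the residues of the rational function at the distinct poles $1/\bar z_{k}$; both arguments are valid and equally elementary.
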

\begin{proof}
We note that the function $\mathcal{K}\left(\cdot,\cdot\right)$ is
the reproducing kernel for $H^{2}$ meaning
that, for any $u\in H^{2}$, $z_{0}\in\mathbb{D}$,
point evaluation is given by the inner product $u\left(z_{0}\right)=\left\langle u,\mathcal{K}\left(z_{0},\cdot\right)\right\rangle _{L^{2}\left(\mathbb{T}\right)},$
which is a direct consequence of the Cauchy integral formula because
$d\theta=\dfrac{1}{iz}dz$ in (\ref{eq:Hp_innerprod}). The coefficients
$\left\{ \psi_{k}\right\} _{k=1}^{N}\in\mathbb{C}$ in (\ref{eq:interp_expans})
are to be found from the requirement (\ref{eq:meas_cond}). We therefore
have
\begin{equation}
\psi_{k}=\sum_{j=1}^{N}S_{kj}\omega_{j},\quad\text{where}\quad S:=\left[S_{kj}\right] =  \left[\mathcal{K}\left(z_{k},z_{j}\right)\right]^{-1} \, , \ k, j=1, \dots, N.\label{eq:S_def}
\end{equation}
In order to see that the existence of the inverse matrix $S$ is unconditional,
we note that $\mathcal{K}\left(z_{k},z_{j}\right)=\left\langle \mathcal{K}\left(z_{k},\cdot\right),\mathcal{K}\left(z_{j},\cdot\right)\right\rangle _{L^{2}\left(\mathbb{T}\right)}$,
and hence it is the inverse of a Gram matrix which exists since $z_{k}\neq z_{j}$
whenever $k\neq j$ providing that all functions $\left\{ \mathcal{K}\left(z_{k},z\right)\right\} _{k=1}^{N}$
are linearly independent. To check the latter, we verify the implication
\[
\sum_{k=1}^{N}c_{k}\mathcal{K}\left(z_{k},z\right)=0\qquad\Rightarrow\qquad c_{k}=0,\quad k=1, \dots, N.
\]
Employing the identity $\dfrac{1}{1-\bar{z}_{k}z}=\sum_{n=0}^{\infty}\bar{z}_{k}^{n}z^{n}$
that holds due to $\left|z_{k}z\right|<1$, we see that
\[
\sum_{n=0}^{\infty}\left(\sum_{k=1}^{N}c_{k}\bar{z}_{k}^{n}\right)z^{n}=0,\quad \forall z \in \mathbb{D} \qquad\Rightarrow\qquad\sum_{k=1}^{N}c_{k}\bar{z}_{k}^{n}=0,\quad n\in\mathbb{N}_{0}.
\]
But, by induction on $k$, this necessarily implies that $c_{k}=0$,
$k=1,\dots,N$ and thus proves the linear independence. 

To show that $\psi\in H^{2}$ is the unique
interpolant of minimal norm, we let $\psi_{0}\in H^{2}$
be another interpolant satisfying (\ref{eq:meas_cond}). Then, $\phi_{0}:=\psi-\psi_{0}\in H^{2}$
is such that $\left.\phi_{0}\right|_{z=z_{k}}=0$, $k=1, \dots, N$,
or equivalently,
\[
\left\langle \phi_{0},\mathcal{K}\left(z_{k},\cdot\right)\right\rangle _{L^{2}\left(\mathbb{T}\right)}=0,\quad k=1,\dots,N
\]
meaning orthogonality of $\phi_{0}\left(z\right)$ to a linear span
of $\left\{ \mathcal{K}\left(z_{k},z\right)\right\} _{k=1}^{N}$.
But $\psi$ exactly belongs to this span, and hence
\begin{equation}
\left\Vert \psi_{0}\right\Vert _{H^{2}}^{2}=\left\Vert \psi\right\Vert _{H^{2}}^{2}+\left\Vert \phi_{0}\right\Vert _{H^{2}}^{2}>\left\Vert \psi\right\Vert _{H^{2}}^{2},\label{eq:interp_H2_minimality}
\end{equation}
which shows that $\psi$ is the unique  interpolating $H^2$ function of minimal norm.\end{proof}
\begin{rem}
\label{rem:sol_fin_alt}With this choice of $\psi$, the solution
(\ref{eq:g0_sol_fin}) takes the form
\begin{equation}
g_{0}=\left(1+\mu\phi\right)^{-1}\left[P_{+}\left(\bar{b}\left(f\vee h\right)\right)+
\mu \, 
P_+ \left(0 \vee \bar{b}\left(h-\psi\right)\right) 
\right] \, .\label{eq:g0_sol_fin_alt}
\end{equation}
Indeed, since $\left\langle \mathcal{K}\left(z_{k},z\right),bu\right\rangle _{L^{2}\left(\mathbb{T}\right)}=0$,
$k=1,\dots,N$ for any $u\in H^{2}$, we have
$P_{+}\left(\bar{b}\psi\right)=0$ whenever $\psi$ is given by (\ref{eq:interp_expans}).
\end{rem}
Now it may look tempting to consider other choices of the interpolant
to improve the $L^{2}$-bounds in (\ref{eq:B_space}) or (\ref{eq:BEP_main})
rather than being itself of minimal $L^{2}\left(\mathbb{T}\right)$ norm.
However, the choice of the interpolant does not affect the combination $\tilde{g}_{0}=\psi+bg_{0}$, a result that is not surprising at all from physical point of view since $\psi$ is an auxiliary tool which should not affect solution whose dependence must
eventually boil down to given data (measurement related quantities) only:
$\left\{ z_{k}\right\} _{k=1}^{N}$, $\left\{ \omega_{k}\right\} _{k=1}^{N}$,
$f$ and $h$. More precisely, we have
\begin{lem}
\label{lem:psi_indep}Given arbitrary $\psi_{1}$, $\psi_{2}\in H^{2}$
satisfying (\ref{eq:meas_cond}), we have $\psi_{1}+bg_{0}\left(\psi_{1}\right)=\psi_{2}+bg_{0}\left(\psi_{2}\right)$. \end{lem}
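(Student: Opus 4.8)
The plan is to show that the combination $\tilde g_0 = \psi + b g_0(\psi)$ solves a minimization problem that is manifestly independent of the choice of interpolant $\psi$, so that uniqueness of the minimizer forces the claimed equality. Concretely, recall from \eqref{eq:g0_sol_def} that $g_0(\psi)$ is the $\arg\min$ over $g \in \mathcal B_{M,h}^{\psi,b}$ of $\|\psi + bg - f\|_{L^2(I)}$. I would rewrite this in terms of the "total" interpolating function $\tilde g = \psi + bg$: the map $g \mapsto \psi + bg$ is an affine bijection from $H^2$ onto $\mathcal A^{\psi,b}$ (injectivity because $b\ne 0$, surjectivity by definition \eqref{eq:A_space}), and crucially $\mathcal A^{\psi,b}$ itself \emph{does not depend on $\psi$}: it is exactly the set of all $H^2$ functions satisfying the interpolation conditions \eqref{eq:meas_cond}, which I will call $\mathcal A$. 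Indeed, if $\psi_1,\psi_2$ both satisfy \eqref{eq:meas_cond}, then $\psi_1 - \psi_2$ vanishes at all $z_j$, hence by the factorization property of Hardy spaces (stated in Section~\ref{sec:intro_Hardy}, cf. \eqref{eq:Blaschke_prod}) we have $\psi_1 - \psi_2 = b\,q$ for some $q \in H^2$; thus $\psi_1 + bH^2 = \psi_2 + bH^2 = \mathcal A$.

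Next I would translate the constraint set. The condition $g \in \mathcal B_{M,h}^{\psi,b}$ reads $\|\psi + bg - h\|_{L^2(J)} \le M$, i.e. $\|\tilde g - h\|_{L^2(J)} \le M$; so under the substitution $\tilde g = \psi + bg$ the admissible set becomes
\[
\tilde{\mathcal A}_{M,h} := \{\, \tilde g \in \mathcal A : \|\tilde g - h\|_{L^2(J)} \le M \,\},
\]
again independent of $\psi$. Consequently, for any interpolant $\psi$,
\[
\psi + b\,g_0(\psi) \;=\; \arg\min_{\tilde g \in \tilde{\mathcal A}_{M,h}} \|\tilde g - f\|_{L^2(I)},
\]
where the right-hand side makes no reference to $\psi$ at all. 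The existence and uniqueness of this minimizer is exactly Theorem~\ref{thm:existence} read through the same change of variables (the restriction map $\tilde g \mapsto \tilde g|_I$ is what appears there, and the relevant set $\tilde{\mathcal A}_{M,h}|_I = \mathcal C_{M,h}^{\psi,b}$ is closed and convex). Hence $\psi_1 + b g_0(\psi_1)$ and $\psi_2 + b g_0(\psi_2)$ are both equal to this unique minimizer, which gives the claim.

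The only point requiring a little care—and the one I would flag as the main obstacle—is making sure the change of variables is legitimate at the level of the objective and does not silently alter the feasible set: one must check that $\tilde g = \psi + bg$ is genuinely a bijection $H^2 \to \mathcal A$ (this is where injectivity of multiplication by $b$ on $H^2$, equivalently $b \not\equiv 0$, is used) and that the value $\|\psi + bg - f\|_{L^2(I)} = \|\tilde g - f\|_{L^2(I)}$ together with the constraint $\|\tilde g - h\|_{L^2(J)}\le M$ transform verbatim, with no hidden $\psi$-dependence surviving. Once the identity $\mathcal A^{\psi_1,b} = \mathcal A^{\psi_2,b}$ is established via the Blaschke factorization of $\psi_1-\psi_2$, everything else is the observation that two expressions for the same unique minimizer must coincide. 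I would write the argument in roughly that order: (1) $\mathcal A^{\psi,b}$ is $\psi$-independent; (2) rewrite \eqref{eq:BEP_main} and \eqref{eq:B_space} in the variable $\tilde g$; (3) invoke uniqueness from Theorem~\ref{thm:existence} to conclude $\psi_1 + b g_0(\psi_1) = \psi_2 + b g_0(\psi_2)$.
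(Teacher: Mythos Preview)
Your proof is correct and takes a genuinely different route from the paper's. The paper proceeds computationally: it writes $\delta_\psi=\psi_2-\psi_1$, $\delta_g=g_0(\psi_2)-g_0(\psi_1)$, $\delta_\mu=\mu(\psi_2)-\mu(\psi_1)$, subtracts the two saturated constraint equations and the two optimality conditions \eqref{eq:orthog_cond}, uses the Blaschke factorization $\delta_\psi=b\eta$ to test against $\xi=\eta+\delta_g\in H^2$, and arrives at an identity forcing $\delta_\mu\le 0$; by symmetry $\delta_\mu=0$, and then $\|\delta_\psi+b\delta_g\|_{L^2(\mathbb T)}=0$. Your argument instead observes that $\mathcal A^{\psi,b}=\psi+bH^2$ is exactly the affine subspace of $H^2$-interpolants of the data (hence $\psi$-independent via the same factorization $\psi_1-\psi_2=bq$), rewrites both the objective and the constraint in the variable $\tilde g=\psi+bg$, and then invokes the uniqueness part of Theorem~\ref{thm:existence}. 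This is shorter and more conceptual: it explains \emph{why} the result holds rather than verifying it by calculation. The paper's approach buys one extra piece of information along the way, namely that the Lagrange parameter itself is $\psi$-independent ($\delta_\mu=0$); in your framework this is not proved directly but follows \emph{a posteriori} from Theorem~\ref{thm:charact} once $\tilde g_0$ is known to be the same.
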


\begin{proof}
First of all, we note that the dependence $g_{0}\left(\psi\right)$
is not only due to explicit appearance of $\psi$ in (\ref{eq:g0_sol_fin}),
but also because the Lagrange parameter $\mu$, in general, has to
be readjusted according to $\psi$, that is $\mu=\mu\left(\psi\right)$
so that
\begin{equation}
\left\Vert \psi_{k}+bg_{0}\left(\psi_{k}\right)-h\right\Vert _{L^{2}\left(J\right)}^{2}=M^{2},\hspace{1em}k=1,2,\label{eq:constr_var_psi}
\end{equation}
where we mean $g_{0}\left(\psi\right)=g_{0}\left(\psi,\mu\left(\psi\right)\right)$.
Let us denote $\delta_{\psi}:=\psi_{2}-\psi_{1}$, $\delta_{\mu}:=\mu\left(\psi_{2}\right)-\mu\left(\psi_{1}\right)$,
$\delta_{g}:=g_{0}\left(\psi_{2}\right)-g_{0}\left(\psi_{1}\right)$.
Taking difference of both equations (\ref{eq:constr_var_psi}), we
have
\[
\left\langle \delta_{\psi}+b\delta_{g},\psi_{1}+bg_{0}\left(\psi_{1}\right)-h\right\rangle _{L^{2}\left(J\right)}+\left\langle \psi_{2}+bg_{0}\left(\psi_{2}\right)-h,\delta_{\psi}+b\delta_{g}\right\rangle _{L^{2}\left(J\right)}=0
\]

\begin{equation}
\Rightarrow\hspace{1em}2\text{Re}\left\langle \bar{b}\delta_{\psi}+\delta_{g},\bar{b}\psi_{2}+g_{0}\left(\psi_{2}\right)-\bar{b}h\right\rangle _{L^{2}\left(J\right)}=\left\Vert \delta_{\psi}+b\delta_{g}\right\Vert _{L^{2}\left(J\right)}^{2}.\label{eq:delta_psi_J}
\end{equation}
On the other hand, the optimality condition (\ref{eq:orthog_cond})
implies that, for any $\xi\in H^{2}$,
\[
\left\langle \bar{b}\psi_{k}+g_{0}\left(\psi_{k}\right)-\bar{b}f,\xi\right\rangle _{L^{2}\left(I\right)}=-\left(1+\mu\left(\psi_{k}\right)\right)\left\langle \bar{b}\psi_{k}+g_{0}\left(\psi_{k}\right)-\bar{b}h,\xi\right\rangle _{L^{2}\left(J\right)},\hspace{1em}k=1,2,
\]
and therefore
\begin{equation}
\left\langle \bar{b}\delta_{\psi}+\delta_{g},\xi\right\rangle _{L^{2}\left(I\right)}=-\left(1+\mu\left(\psi_{1}\right)\right)\left\langle \bar{b}\delta_{\psi}+\delta_{g},\xi\right\rangle _{L^{2}\left(J\right)}-\delta_{\mu}\left\langle \bar{b}\psi_{2}+g_{0}\left(\psi_{2}\right)-\bar{b}h,\xi\right\rangle _{L^{2}\left(J\right)}.\label{eq:delta_psi_IJ}
\end{equation}
Since $\delta_{\psi}\in H^{2}$, due to (\ref{eq:meas_cond}), it
is zero at each $z_{j}$, $j=1,\dots,N$, and hence factorizes as $\delta_{\psi}=b\eta$
for some $\eta\in H^{2}$. This allows us to take $\xi=\bar{b}\delta_{\psi}+\delta_{g}\in H^{2}$
in (\ref{eq:delta_psi_IJ}) to yield
\[
\left\Vert \eta+\delta_{g}\right\Vert _{L^{2}\left(I\right)}^{2}=-\left(1+\mu\left(\psi_{1}\right)\right)\left\Vert \eta+\delta_{g}\right\Vert _{L^{2}\left(J\right)}^{2}-\delta_{\mu}\left\langle \bar{b}\psi_{2}+g_{0}\left(\psi_{2}\right)-\bar{b}h,\eta+\delta_{g}\right\rangle _{L^{2}\left(J\right)}.
\]
Note that the inner product term here is real-valued since the others
are, and so employing (\ref{eq:delta_psi_J}), we arrive at
\[
\left\Vert \eta+\delta_{g}\right\Vert _{L^{2}\left(I\right)}^{2}+\left(1+\mu\left(\psi_{1}\right)\right)\left\Vert \eta+\delta_{g}\right\Vert _{L^{2}\left(J\right)}^{2}=-\dfrac{1}{2}\delta_{\mu}\left\Vert \eta+\delta_{g}\right\Vert _{L^{2}\left(J\right)}^{2}
\]
which, due to $\mu>-1$, entails that $\delta_{\mu}\leq0$. But, clearly,
interchanging $\psi_{1}$ and $\psi_{2}$, we would get $\delta_{\mu}\geq0$,
and so $\delta_{\mu}=0$ leading to $\left\Vert \delta_{\psi}+b\delta_{g}\right\Vert _{L^{2}\left(\mathbb{T}\right)}^{2}=\left\Vert \eta+\delta_{g}\right\Vert _{L^{2}\left(I\right)}^{2}+\left\Vert \eta+\delta_{g}\right\Vert _{L^{2}\left(J\right)}^{2}=0$
which finishes the proof.\end{proof}

Combining this lemma with Remark \ref{rem:sol_fin_alt}, we can formulate
\begin{cor}
Independently of choice of $\psi\in H^{2}$
fulfilling (\ref{eq:meas_cond}), the final solution $\tilde{g}_{0}=\psi+bg_{0}$
is given by
\begin{equation}
\tilde{g}_{0}=\psi+b\left(1+\mu\phi\right)^{-1}\left[P_{+}\left(\bar{b}\left(f\vee h\right)\right)+\mu
P_+ \left(0 \vee \bar{b}\left(h-\psi\right)\right) 
\right].
\label{eq:g0_sol_fin_alt_compl}
\end{equation}
\label{cor:corgtilde0}
\end{cor}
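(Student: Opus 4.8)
The plan is to obtain the corollary as an immediate combination of Lemma \ref{lem:psi_indep} and Remark \ref{rem:sol_fin_alt}, with no genuinely new analysis required. First I would invoke Lemma \ref{lem:psi_indep}: for any two interpolants $\psi_1,\psi_2\in H^2$ satisfying (\ref{eq:meas_cond}) one has $\psi_1+bg_0(\psi_1)=\psi_2+bg_0(\psi_2)$, so the combination $\tilde g_0=\psi+bg_0$ is intrinsic — it depends only on the data $\{z_k\}_{k=1}^N$, $\{\omega_k\}_{k=1}^N$, $f$, $h$ (and on the fixed $M$ and $b$), not on the particular interpolant chosen to encode the pointwise constraints. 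Hence $\tilde g_0$ may be evaluated using whichever admissible interpolant is most convenient.

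Second, I would specialize to the reproducing-kernel interpolant $\psi(z)=\sum_{k=1}^N\psi_k\mathcal{K}(z_k,z)$ furnished by Proposition \ref{prop:interp_construction}, which exists unconditionally. For this choice Remark \ref{rem:sol_fin_alt} already records that $P_+(\bar b\psi)=0$ — each $\mathcal{K}(z_k,\cdot)$ is orthogonal to $bH^2$ since $b$ vanishes at $z_k$ — so the general characterization (\ref{eq:g0_sol_fin}) of Theorem \ref{thm:charact} collapses to (\ref{eq:g0_sol_fin_alt}), that is $g_0=(1+\mu\phi)^{-1}[P_+(\bar b(f\vee h))+\mu P_+(0\vee\bar b(h-\psi))]$, with $\mu$ the Lagrange parameter attached to this interpolant, namely the unique $\mu>-1$ saturating $\|\psi+bg_0-h\|_{L^{2}(J)}=M$. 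Substituting this expression for $g_0$ into $\tilde g_0=\psi+bg_0$ produces exactly formula (\ref{eq:g0_sol_fin_alt_compl}).

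I do not anticipate any real obstacle here, since the substance is entirely contained in the already-proven Lemma \ref{lem:psi_indep}. The only point deserving a word of care is that the Lagrange parameter is itself interpolant-dependent, so both the $\psi$ and the $\mu$ appearing on the right-hand side of (\ref{eq:g0_sol_fin_alt_compl}) are to be read as those of the reproducing-kernel interpolant; Lemma \ref{lem:psi_indep} then guarantees that the resulting $\tilde g_0$ coincides with the one produced by any other admissible choice. If one insisted on keeping an arbitrary $\psi$ on the right, one would merely have to carry along the extra term $-b(1+\mu(\psi)\phi)^{-1}P_+(\bar b\psi)$, which vanishes precisely for the reproducing-kernel choice, so nothing is lost by stating the formula in the reduced form above.
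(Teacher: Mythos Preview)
Your proposal is correct and follows exactly the paper's approach, which is simply to combine Lemma \ref{lem:psi_indep} with Remark \ref{rem:sol_fin_alt}; your exposition is in fact more careful than the paper's one-line justification. One small refinement: the proof of Lemma \ref{lem:psi_indep} actually establishes $\delta_\mu=0$, so the Lagrange parameter $\mu$ is the \emph{same} for every admissible interpolant, and your caveat about reading $\mu$ as the one attached to the reproducing-kernel choice, while harmless, is unnecessary.
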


These results will be employed for analytical purposes in Section
\ref{sec:stab_res}.

Even though it is not going to be used here, we also note that it
is possible to construct an interpolant whose norm does not exceed
\textit{a priori} given bound providing a certain quadratic form involving
interpolation data and value of the bound is positive semidefinite
\cite{Duren-Williams}.

\newpage
\section{\label{sec:comp_issues}Computational issues and error estimate}

We would like to stress again that the obtained formulas (\ref{eq:g0_sol_fin}),
(\ref{eq:g_sol_int}) and (\ref{eq:g0_sol_fin_alt}) furnish solution
only in an implicit form with the Lagrange parameter $\mu$ still
to be chosen such that the solution satisfies the equality constraint in (\ref{eq:B_space}).
As it was mentioned in Remark \ref{rem:constr_dropout}, the constraint
in $\mathcal{B}_{M,h}^{\psi,b}$ does not enter the solution characterisation
(\ref{eq:g0_pre_sol_fin}) when $\mu=-1$, so as $\mu\searrow-1$
we expect perfect approximation of the given $f\in L^{2}\left(I\right)\backslash\left.\mathcal{A}^{\psi,b}\right|_{I}$
at the expense of uncontrolled growth of the quantity 
\begin{equation}
\label{eq:defM0}
M_{0}\left(\mu\right):=\left\Vert \psi+bg_{0}\left(\mu\right)-h\right\Vert _{L^{2}\left(J\right)}
\end{equation}
according to Propositions \ref{prop:trace_res1} and \ref{prop:trace_res2}. This is not surprising since the inclusion $\mathcal{B}_{M_{1},h}^{\psi,b}\subset\mathcal{B}_{M_{2},h}^{\psi,b}$ whenever $M_{1}<M_{2}$ implies that the minimum of the cost functional of (\ref{eq:BEP_main}) sought over $\mathcal{B}_{M_{1},h}^{\psi,b}$ is bigger than that for $\mathcal{B}_{M_{2},h}^{\psi,b}$.
For devising a feasible for applications solution, a suitable
trade-off between value of $\mu$ (governing quality of approximation
on $I$) and choice of the admissible bound $M$ has to be found. 
To gain insight into this situation, we define the error of approximation
as 
\begin{equation}
\label{eq:def_e}
e\left(\mu\right):=\left\Vert \psi+bg_{0}\left(\mu\right)-f\right\Vert _{L^{2}\left(I\right)}^{2},
\end{equation}
and proceed with establishing connection between $e$ and $M_{0}$.

\subsection{Monotonicity and boundedness}
Here we mainly follow the steps of \cite{Alpay, Baratchart-Leblond} where similar studies has been done without interpolation conditions.
\begin{prop} \label{prop:monot}
The following monotonicity results hold 
\begin{equation}
\dfrac{de}{d\mu}>0,\qquad\dfrac{dM_{0}^{2}}{d\mu}<0.\label{eq:e-M_monot}
\end{equation}
Moreover, we have
\begin{equation}
\dfrac{de}{d\mu}=-\left(\mu+1\right)\dfrac{dM_{0}^{2}}{d\mu}.\label{eq:e-M_connect}
\end{equation}
\end{prop}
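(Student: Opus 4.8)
The plan is to differentiate the characterization $g_0 = g_0(\mu)$ from \eqref{eq:g0_sol_fin} with respect to $\mu$, exploit the Lagrange-type orthogonality relation \eqref{eq:Lagr_cond_fin} to simplify the resulting cross terms, and thereby obtain both the monotonicity of $e$ and $M_0^2$ and the identity linking their derivatives. First I would write the solution in the compact form $(1+\mu\phi)g_0 = P_+\!\left(\bar b(f-\psi)\vee(1+\mu)\bar b(h-\psi)\right)$, observe that the right-hand side depends on $\mu$ only through the affine factor $(1+\mu)$ on the $J$-part, and differentiate. Setting $\dot g_0 := dg_0/d\mu$, this yields $(1+\mu\phi)\dot g_0 + \phi g_0 = P_+\!\left(0\vee \bar b(h-\psi)\right)$, or equivalently $(1+\mu\phi)\dot g_0 = P_+\!\left(0\vee \bar b(h-\psi)\right) - \phi g_0 = -P_+\!\left(0\vee \bar b(\psi+bg_0-h)\right)$, using $\phi g_0 = P_+(0\vee\bar b b g_0) = P_+(0\vee\bar b(\psi+bg_0)) - P_+(0\vee\bar b\psi)$ and reassembling. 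The key structural fact I would extract is the reformulated optimality condition, valid for every $\xi\in H^2$,
\[
\left\langle \bar b(f-\psi)-g_0,\xi\right\rangle_{L^2(I)} = (1+\mu)\left\langle g_0 - \bar b(h-\psi),\xi\right\rangle_{L^2(J)},
\]
which is just \eqref{eq:Lagr_cond_fin} paired against $\xi$ and split over $I\cup J$; this is the analogue of \eqref{eq:Lagr_cond_IJ_gn} for the limiting solution.

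Next I would compute the two derivatives directly. For $M_0^2(\mu) = \left\Vert \psi+bg_0-h\right\Vert_{L^2(J)}^2$, differentiation gives $\tfrac{d}{d\mu}M_0^2 = 2\,\mathrm{Re}\left\langle b\dot g_0, \psi+bg_0-h\right\rangle_{L^2(J)} = 2\,\mathrm{Re}\left\langle \dot g_0, \bar b(\psi+bg_0-h)\right\rangle_{L^2(J)}$, and since $\dot g_0\in H^2$ I may replace $\bar b(\psi+bg_0-h)$ restricted to $J$ by its relevant projection and recognize it, via the differentiated equation above, as $-\left\langle \dot g_0,(1+\mu\phi)\dot g_0\right\rangle$. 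Because $\sigma(\phi)=[0,1]$ by \eqref{eq:spectrum}, the operator $1+\mu\phi$ is positive definite for $\mu>-1$, so $\left\langle \dot g_0,(1+\mu\phi)\dot g_0\right\rangle > 0$ (it is nonzero as long as $h\notin \left.H^2\right|_J$, i.e.\ as long as the problem is genuinely constrained), giving $\tfrac{d}{d\mu}M_0^2 < 0$. For $e(\mu) = \left\Vert \psi+bg_0-f\right\Vert_{L^2(I)}^2$, differentiation gives $\tfrac{de}{d\mu} = 2\,\mathrm{Re}\left\langle \dot g_0, \bar b(\psi+bg_0-f)\right\rangle_{L^2(I)} = -2\,\mathrm{Re}\left\langle \dot g_0,\bar b(f-\psi)-g_0\right\rangle_{L^2(I)}$. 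Applying the optimality identity with $\xi = \dot g_0$ converts the $I$-inner product into $(1+\mu)$ times the $J$-inner product: $\left\langle \bar b(f-\psi)-g_0,\dot g_0\right\rangle_{L^2(I)} = (1+\mu)\left\langle g_0-\bar b(h-\psi),\dot g_0\right\rangle_{L^2(J)}$. Combining this with the expression already obtained for $\tfrac{d}{d\mu}M_0^2$ yields exactly $\tfrac{de}{d\mu} = -(1+\mu)\tfrac{d}{d\mu}M_0^2$, which is \eqref{eq:e-M_connect}; since the right-hand side is $(1+\mu)$ times a positive quantity, $\tfrac{de}{d\mu}>0$ follows immediately, completing \eqref{eq:e-M_monot}.

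The main obstacle I anticipate is rigor in the differentiation step: justifying that $\mu\mapsto g_0(\mu)$ is indeed differentiable (in the $H^2$ norm) and that one may differentiate the $L^2(I)$ and $L^2(J)$ norms under the inner product. This is handled cleanly because $g_0(\mu) = (1+\mu\phi)^{-1}w(\mu)$ with $w(\mu)$ affine in $\mu$ and $\mu\mapsto(1+\mu\phi)^{-1}$ real-analytic in operator norm on $(-1,\infty)$ (Neumann series / resolvent calculus, using $\left\Vert\phi\right\Vert\le 1$), so $g_0$ is real-analytic as an $H^2$-valued function; the norm-squared functionals are then smooth in $\mu$ and the Leibniz rule applies. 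A secondary subtlety is the strictness of the inequalities: one must note that $\dot g_0\not\equiv 0$, which holds precisely when the constraint is active and $h\notin\left.H^2\right|_J$ — the very standing assumptions of the section — so that the positive-definiteness of $1+\mu\phi$ gives strict sign rather than merely $\le$ and $\ge$. Everything else is bookkeeping with the splitting $L^2(\mathbb T) = L^2(I)\oplus L^2(J)$ and the self-adjointness of $\phi$.
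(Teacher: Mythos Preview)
Your proof is correct and follows essentially the same route as the paper: differentiate \eqref{eq:g0_pre_sol_fin}, use positive-definiteness of $1+\mu\phi$ for the sign of $dM_0^2/d\mu$, and invoke the orthogonality \eqref{eq:Lagr_cond_fin} with $\xi=\dot g_0$ to obtain \eqref{eq:e-M_connect} (the paper reaches this last step by introducing an auxiliary parameter $\beta$ and then specializing to $\beta=\mu+1$, but the content is identical). One small correction on strictness: $\dot g_0\neq 0$ follows from $M_0>0$, which is guaranteed by $f\notin\left.\mathcal{A}^{\psi,b}\right|_I$ via Lemma~\ref{lem:saturation} and Corollary~\ref{cor:M_pos}, not from the condition $h\notin\left.H^2\right|_J$.
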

\begin{proof}
From (\ref{eq:g0_sol_fin}), using commutation of $\phi$ and $\left(1+\mu\phi\right)^{-1}$,
we compute 
\[
\dfrac{dg_{0}}{d\mu}=-\left(1+\mu\phi\right)^{-2}\phi P_{+}\left(\bar{b}\left(f-\psi\right)\vee\left(1+\mu\right)\bar{b}\left(h-\psi\right)\right)+\left(1+\mu\phi\right)^{-1}P_{+}\left(0\vee\bar{b}\left(h-\psi\right)\right)
\]
\begin{equation}
\Rightarrow\hspace{1em}\dfrac{dg_{0}}{d\mu}=-\left(1+\mu\phi\right)^{-1}\left[\phi g_{0}+P_{+}\left(0\vee\bar{b}\left(\psi-h\right)\right)\right],\label{eq:dg0_dmu}
\end{equation}
and thus
\[
\dfrac{dM_{0}^{2}}{d\mu} =  2\text{Re}\left\langle b\frac{dg_{0}}{d\mu},\psi+bg_{0}-h\right\rangle _{L^{2}\left(J\right)}
\]
\begin{equation}
 =  -2\text{Re}\left\langle \left(1+\mu\phi\right)^{-1}\left[\phi g_{0}+P_{+}\left(0\vee\bar{b}\left(\psi-h\right)\right)\right],\phi g_{0}+P_{+}\left(0\vee\bar{b}\left(\psi-h\right)\right)\right\rangle _{L^{2}\left(\mathbb{T}\right)} <0 ,
\label{eq:M_monot}
\end{equation}
The inequality here is due to the spectral result (\ref{eq:spectrum})
implying 
\[
\text{Re}\left\langle \left(1+\mu\phi\right)^{-1}\xi,\xi\right\rangle _{L^{2}\left(\mathbb{T}\right)}=\left\langle \left(1+\mu\phi\right)^{-1}\xi,\xi\right\rangle _{L^{2}\left(\mathbb{T}\right)}\geq0
\]
for any $\xi\in H^{2}$ and $\mu>-1$ whereas
the equality in (\ref{eq:M_monot}) would be possible, according to
Proposition \ref{prop:phi_inject}, only when $\left.g_{0}\right|_{J}=\bar{b}$$\left(h-\psi\right)$,
that is $M_{0}=0$, the case that was eliminated by Corollary \ref{cor:M_pos}.

Now, for any $\beta\in\mathbb{R}$, making use of (\ref{eq:dg0_dmu})
again, we compute
\begin{eqnarray*}
\dfrac{de}{d\mu}&=&2\text{Re}\left\langle \frac{dg_{0}}{d\mu},\bar{b}\left(\psi-f\right)+g_{0}\right\rangle _{L^{2}\left(I\right)}\\
 & = & -2\text{Re}\left\langle \left(1+\mu\phi\right)^{-1}\left[\phi g_{0}+P_{+}\left(0\vee\bar{b}\left(\psi-h\right)\right)\right],\left(\bar{b}\left(\psi-f\right)+g_{0}\right)\vee0\right\rangle _{L^{2}\left(\mathbb{T}\right)}\\ 
 & = & -\beta\dfrac{dM_{0}^{2}}{d\mu}-2\text{Re} B,
\end{eqnarray*}


with $B$ given by

\begin{eqnarray*}
 & & \left\langle \left(1+\mu\phi\right)^{-1}\left[\phi g_{0}+P_{+}\left(0\vee\bar{b}\left(\psi-h\right)\right)\right],\beta\phi g_{0}+\beta P_{+}\left[0\vee\bar{b}\left(\psi-h\right)\right]+\left(\bar{b}\left(\psi-f\right)+g_{0}\right)\vee0\right\rangle _{L^{2}\left(\mathbb{T}\right)}\\
 & & = \left\langle \left(1+\mu\phi\right)^{-1}\left[\phi g_{0}+P_{+}\left(0\vee\bar{b}\left(\psi-h\right)\right)\right],\left(\bar{b}\left(\psi-f\right)+g_{0}\right)\vee\beta\left[\bar{b}\left(\psi-h\right)+g_{0}\right]\right\rangle _{L^{2}\left(\mathbb{T}\right)}\\
 & & = \left\langle b\left(1+\mu\phi\right)^{-1}\left[\phi g_{0}+P_{+}\left(0\vee\bar{b}\left(\psi-h\right)\right)\right],\left(\psi+bg_{0}-f\right)\vee\beta\left(\psi+bg_{0}-h\right)\right\rangle _{L^{2}\left(\mathbb{T}\right)},
\end{eqnarray*}
where we suppressed the $P_{+}$ operator on the right part of the inner product in the second line due to the fact that the left part of it belongs to $H^{2}$.

The choice $\beta=\mu+1=\lambda$ entails $\text{Re} B = 0$
due to (\ref{eq:orthog_cond}), and we thus obtain (\ref{eq:e-M_connect}).
Since $\mu+1>0$, (\ref{eq:e-M_connect}) combines with (\ref{eq:M_monot})
to furnish the remaining inequality in (\ref{eq:e-M_monot}).
\end{proof}
In particular, equation (\ref{eq:e-M_connect}) encodes how the decay
of the approximation error on $I$ is accompanied by $\tilde{g}_{0}=\psi+bg_{0}$
departing further away from given $h$ on $J$ as $\mu\searrow-1$.
Even though more concrete asymptotic estimates on the increase of $M_{0}\left(\mu\right)$
near $\mu=-1$ will be discussed later on, we start providing merely
a rough square-integrability result which is contained in the following

\begin{prop}
\label{prop:M0_integr}The deviation $M_{0}$ of the
solution $\tilde{g}_{0}$ from $h$ on $J$ has moderate growth
as $\mu\searrow-1$ so that, for any $-1<\mu_{0}<\infty$, 
\begin{equation}
\int_{-1}^{\mu_{0}}M_{0}^{2}\left(\mu\right)d\mu<\infty.\label{eq:propM0}
\end{equation}
\end{prop}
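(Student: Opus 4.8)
The plan is to integrate the balance relation \eqref{eq:e-M_connect} and exploit monotonicity of $e(\mu)$ together with its boundedness, which comes for free from the fact that $e$ is a squared $L^2(I)$-distance to a fixed function $f$ from points in a bounded set. Concretely, rewrite \eqref{eq:e-M_connect} as
\[
M_0^2(\mu) = -\frac{1}{\mu+1}\,\frac{de}{d\mu}(\mu),
\]
which is legitimate on $(-1,\mu_0]$ since $\mu+1>0$ there and $e$ is differentiable by Proposition \ref{prop:monot}. Then
\[
\int_{-1}^{\mu_0} M_0^2(\mu)\,d\mu = -\int_{-1}^{\mu_0}\frac{1}{\mu+1}\,\frac{de}{d\mu}(\mu)\,d\mu .
\]
Since $de/d\mu>0$ by \eqref{eq:e-M_monot}, the integrand $-\frac{1}{\mu+1}\frac{de}{d\mu}$ is $\le 0$... wait, that has the wrong sign; the integrand is nonpositive while $M_0^2\ge 0$, so I must be careful: in fact $\frac{dM_0^2}{d\mu}<0$ and $\frac{de}{d\mu}=-(\mu+1)\frac{dM_0^2}{d\mu}>0$ are consistent, and $M_0^2=-\frac{1}{\mu+1}\frac{de}{d\mu}$ is indeed $<0$?? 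No: $\frac{de}{d\mu}>0$ and $\mu+1>0$ give $-\frac{1}{\mu+1}\frac{de}{d\mu}<0$, contradicting $M_0^2\ge0$. So the correct reading of \eqref{eq:e-M_connect} must be used with the sign of $dM_0^2/d\mu$: write instead $M_0^2(\mu)=\frac{1}{\mu+1}\frac{de}{d\mu}(\mu)$ only if $\frac{de}{d\mu}=(\mu+1)\frac{d?}{}$; to avoid sign pitfalls I would instead integrate \eqref{eq:e-M_connect} directly against a test against $d\mu$ and integrate by parts.

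The cleaner route: multiply \eqref{eq:e-M_connect} by $1$ and integrate over $[\mu_1,\mu_0]$ for $-1<\mu_1<\mu_0$:
\[
e(\mu_0)-e(\mu_1) = -\int_{\mu_1}^{\mu_0}(\mu+1)\,\frac{dM_0^2}{d\mu}(\mu)\,d\mu .
\]
Integrate the right-hand side by parts:
\[
e(\mu_0)-e(\mu_1) = -(\mu+1)M_0^2(\mu)\Big|_{\mu_1}^{\mu_0} + \int_{\mu_1}^{\mu_0} M_0^2(\mu)\,d\mu .
\]
Rearranging,
\[
\int_{\mu_1}^{\mu_0} M_0^2(\mu)\,d\mu = e(\mu_0)-e(\mu_1) + (\mu_0+1)M_0^2(\mu_0) - (\mu_1+1)M_0^2(\mu_1).
\]
Now let $\mu_1\searrow -1$. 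The term $e(\mu_0)-e(\mu_1)$ stays bounded because $0\le e(\mu_1)\le e(\mu_0)$ (monotonicity, \eqref{eq:e-M_monot}) and $e(\mu_0)$ is a fixed finite number; in fact $e(\mu_1)\to e(-1^+)\ge 0$ exists by monotonicity. The term $(\mu_0+1)M_0^2(\mu_0)$ is a fixed finite number. The only term requiring control is $(\mu_1+1)M_0^2(\mu_1)$: I must show it stays bounded (indeed, one expects it to tend to a finite limit) as $\mu_1\searrow-1$. Since $M_0^2$ is decreasing, $(\mu_1+1)M_0^2(\mu_1)\ge 0$, so the left-hand side is bounded above by $e(\mu_0)-e(-1^+)+(\mu_0+1)M_0^2(\mu_0)<\infty$, and since $M_0^2\ge 0$ the integral $\int_{-1}^{\mu_0}M_0^2\,d\mu$ is monotone increasing in the lower-limit direction and bounded, hence finite. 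This gives \eqref{eq:propM0}.

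\textbf{The main obstacle} is the integration-by-parts step and the limiting argument at $\mu=-1$: one needs to know that $M_0^2$ (equivalently $g_0$) is $C^1$ on all of $(-1,\mu_0]$, which follows from the explicit formula \eqref{eq:g0_sol_fin} and the bounded invertibility of $(1+\mu\phi)$ for $\mu>-1$ already established via \eqref{eq:spectrum}; and one must handle the boundary term $(\mu_1+1)M_0^2(\mu_1)$ as $\mu_1\searrow-1$ carefully, since a priori $M_0(\mu_1)\to\infty$. The key observation that saves the argument is precisely that this boundary term is nonnegative (because $M_0^2\ge0$ and $\mu_1+1>0$), so it can only help the upper bound; one does not need its finiteness to conclude, only that the remaining terms are finite. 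If a sharper statement (existence of $\lim_{\mu\searrow-1}(\mu+1)M_0^2(\mu)$) were wanted, I would invoke monotonicity of $(\mu+1)M_0^2$ or extract it from \eqref{eq:e-M_connect} together with $e(-1^+)$ being finite, but for \eqref{eq:propM0} the one-sided bound suffices.
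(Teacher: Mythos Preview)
Your argument is correct and follows the same integration-by-parts route as the paper: both derive the identity
\[
\int_{\mu_1}^{\mu_0} M_0^2(\mu)\,d\mu = e(\mu_0)-e(\mu_1) + (\mu_0+1)M_0^2(\mu_0) - (\mu_1+1)M_0^2(\mu_1)
\]
(this is the paper's \eqref{eq:e_mu_integral}) and then let $\mu_1\searrow-1$. The difference lies in how the boundary term $(\mu_1+1)M_0^2(\mu_1)$ is handled. The paper actually proves the stronger fact \eqref{eq:mu_lowlimit}, namely $(\mu+1)M_0^2(\mu)\to 0$, by invoking the optimality condition \eqref{eq:Lagr_cond_fin} at $g_0$ together with Cauchy--Schwarz and the fact that $e(\mu)\to 0$. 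You instead simply observe that this boundary term is nonnegative and therefore can be dropped to obtain the uniform upper bound $\int_{\mu_1}^{\mu_0} M_0^2\le e(\mu_0)+(\mu_0+1)M_0^2(\mu_0)$. For the purpose of establishing \eqref{eq:propM0} alone, your observation is more economical and avoids the extra appeal to the Lagrange condition. The paper's longer route, however, yields \eqref{eq:mu_lowlimit} as a byproduct, and that stronger statement is reused later (in the proof of Lemma~\ref{lem:Fk_sum_fin}), so it is not wasted effort there.

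One minor remark: the opening paragraph where you try to isolate $M_0^2$ directly from \eqref{eq:e-M_connect} conflates $M_0^2$ with $dM_0^2/d\mu$; you correctly abandoned that line, and the subsequent integration-by-parts computation is clean.
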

\begin{proof}
Integration of (\ref{eq:e-M_connect}) by
parts from $\mu$ to $\mu_{0}$ yields 
\begin{equation}
e\left(\mu_{0}\right)-e\left(\mu\right)=\left(\mu+1\right)M_{0}^{2}\left(\mu\right)-\left(\mu_{0}+1\right)M_{0}^{2}\left(\mu_{0}\right)+\int_{\mu}^{\mu_{0}}M_{0}^{2}\left(\tau\right)d\tau.\label{eq:e_mu_integral}
\end{equation}
 As it was already mentioned in the beginning of the section, Proposition
\ref{prop:trace_res1} implies that the cost functional goes to $0$
when $\mu$ decays to $-1$: 
\begin{equation}
e\left(\mu\right)\searrow0\hspace{1em}\text{as}\hspace{1em}\mu\searrow-1.\label{eq:e_mu_lowlimit}
\end{equation}
We are now going to estimate the behavior of the product $\left(\mu+1\right)M_{0}^{2}\left(\mu\right)$.
First of all, since the constraint is saturated (Lemma \ref{lem:saturation}),
condition (\ref{eq:Lagr_cond_fin}) implies that
\begin{eqnarray}
\left\langle f-\psi-bg_{0},bg_{0}\right\rangle _{L^{2}\left(I\right)} & = & \left(1+\mu\right)\left\langle h-\psi-bg_{0},-bg_{0}\right\rangle _{L^{2}\left(J\right)}\nonumber \\
 & = & \left(1+\mu\right)M_{0}^{2}-\left(1+\mu\right)\left\langle h-\psi-bg_{0},h-\psi\right\rangle _{L^{2}\left(J\right)},\label{eq:Lagr_cond_eM_impl}
\end{eqnarray}
 and therefore 
\[
e^{1/2}\left(\mu\right)\left\Vert g_{0}\right\Vert _{L^{2}\left(I\right)}\geq\left|\left\langle f-\psi-bg_{0},bg_{0}\right\rangle _{L^{2}\left(I\right)}\right|\geq\left(1+\mu\right)M_{0}\left(M_{0}-\left\Vert h-\psi\right\Vert _{L^{2}\left(J\right)}\right).
\]
Now, since $M_{0}\nearrow\infty$ as $\mu\searrow-1$ (because of (\ref{eq:e_mu_lowlimit}) and Proposition \ref{prop:trace_res1}),
the first term is dominant, and thus the right-hand side remains positive.
Then, because of (\ref{eq:e_mu_lowlimit}) and finiteness of $\left\Vert g_{0}\right\Vert _{L^{2}\left(I\right)}$
(by the triangle inequality, $\left\Vert g_{0}\right\Vert _{L^{2}\left(I\right)}\leq e^{1/2}\left(\mu\right)+\left\Vert \psi-f\right\Vert _{L^{2}\left(I\right)}$),
we conclude that 
\begin{equation}
\left(\mu+1\right)M_{0}^{2}\searrow0\hspace{1em}\text{as}\hspace{1em}\mu\searrow-1,\label{eq:mu_lowlimit}
\end{equation}
which allows us to deduce (\ref{eq:propM0}) from (\ref{eq:e_mu_integral}). 
\end{proof}
\begin{rem}
In the simplified case with no pointwise interpolation conditions (or those of zero-values) and no information on $J$,
the conclusion of the Proposition can be strengthened to 
\begin{equation}
\left\Vert M_{0}\right\Vert _{L^{2}\left(-1,\infty\right)}:=\left(\int_{-1}^{\infty}M_{0}^{2}\left(\mu\right)d\mu\right)^{1/2}=\left\Vert f\right\Vert _{L^{2}\left(I\right)},\label{eq:M0_norm}
\end{equation}
a result that was given in \cite{Alpay}. This mainly relies on
the fact that, for $\psi\equiv0$ and $h\equiv0$, 
\begin{equation}
g_{0}\rightarrow0\hspace{1em}\text{in}\hspace{1em}L^{2}\left(\mathbb{T}\right)\hspace{1em}\text{as}\hspace{1em}\mu\nearrow\infty,\label{eq:g0_uplimit}
\end{equation}
which holds by the following argument. Denoting $\tilde{f}:=P_{+}\left(\bar{b}f\vee0\right)$,
the solution formulas (\ref{eq:g0_sol_fin}) and (\ref{eq:g0_pre_sol_fin})
become $g_{0}=\left(1+\mu\phi\right)^{-1}\tilde{f}$ and $\mu\phi g_{0}=\tilde{f}-g_{0}$,
respectively. From these, as $\mu\nearrow\infty$, using the spectral
theorem (see Appendix), we obtain 
\[
\left\Vert \phi g_{0}\right\Vert _{H^{2}}=\dfrac{1}{\mu}\left\Vert \tilde{f}-g_{0}\right\Vert _{H^{2}}\leq\dfrac{1}{\mu}\left\Vert f\right\Vert _{L^{2}\left(I\right)}\left[1+\left\Vert \left(1+\mu\phi\right)^{-1}\right\Vert \right]\leq\dfrac{2}{\mu}\left\Vert f\right\Vert _{L^{2}\left(I\right)}\searrow0,
\]
 and hence, by Proposition \ref{prop:phi_inject}, conclude that $\left\Vert g_{0}\right\Vert _{H^{2}}\searrow0$.
We also need to show that 
\begin{equation}
\left(\mu+1\right)M_{0}^{2}\searrow0\hspace{1em}\text{as}\hspace{1em}\mu\nearrow\infty , \label{eq:mu_uplimit}
\end{equation}
but this follows from the positivity $\left(\mu+1\right)M_{0}^{2}>0$
and the observation that, for large enough $\mu$, we have 
\[
\frac{d\left[\left(\mu+1\right)M_{0}^{2}\right]}{d\mu}=M_{0}^{2}+\left(\mu+1\right)\frac{dM_{0}^{2}}{d\mu}<0
\]
(the inequality holds since, due to (\ref{eq:M_monot}), the second
term in the right-hand side is strictly negative whereas the first one goes to zero as $\mu$
increases). Finally, further elaboration of (\ref{eq:Lagr_cond_eM_impl})
into 
\[
e\left(\mu\right)+\left(1+\mu\right)M_{0}^{2}\left(\mu\right)=\left\langle \psi+bg_{0}-f,\psi-f\right\rangle _{L^{2}\left(I\right)}+\left(1+\mu\right)\left\langle \psi+bg_{0}-h,\psi-h\right\rangle _{L^{2}\left(J\right)}
\]
yields, in the case $\psi\equiv0$, $h\equiv0$, 
\[
e\left(\mu\right)+\left(1+\mu\right)M_{0}^{2}\left(\mu\right)=\left\langle f-bg_{0},f\right\rangle _{L^{2}\left(I\right)},
\]
which, by (\ref{eq:g0_uplimit})-(\ref{eq:mu_uplimit}), furnishes
$\underset{\mu\rightarrow\infty}{\lim}e\left(\mu\right)=\left\Vert f\right\Vert _{L^{2}\left(I\right)}^{2}$,
and hence (\ref{eq:M0_norm}) follows from (\ref{eq:e_mu_integral}) recalling
again (\ref{eq:e_mu_lowlimit}) and (\ref{eq:mu_lowlimit}).
\end{rem}

\subsection{Sharper estimates}
Precise asymptotic estimates near $\mu=-1$ were obtained in \cite{Baratchart-Grimm}
using concrete spectral theory of Toeplitz operators \cite{Rosenblum,Rosenblum-Rovnyak}.
Namely, under some specific regularity assumptions on the boundary
data $f$ (related to integrability of the first derivative on $I$),
we have 
\begin{equation}
M_{0}^{2}\left(\mu\right)={\cal O}\left(\left(1+\mu\right)^{-1}\log^{-2}\left(1+\mu\right)\right),\hspace{1em}e\left(\mu\right)={\cal O}\left(\left|\log^{-1}\left(1+\mu\right)\right|\right)\hspace{1em}\hspace{1em}\text{as\hspace{1em}}\mu\searrow-1.\label{eq:e_M0_asympt}
\end{equation}

Here we suggest a way of \textit{a
priori} estimation of approximation rate and error bounds without
resorting to an iterative solution procedure. This is based on a Neumann-like expansion of the inverse Toeplitz operator which provides series representations for the quantities $e\left(\mu\right)$ and $M_{0}^{2}\left(\mu\right)$ for values of $\mu$ moderately greater than $-1$ and, therefore, complements
previously obtained estimates of the asymptotic behavior of these quantities in the vicinity of $\mu=-1$.
Moreover, using these series expansions, we further attempt to recover the estimates (\ref{eq:e_M0_asympt})
without having concrete spectral theory involved, yet still appealing
to some general spectral theory results.

It is convenient to introduce the quantity
\begin{equation}
\xi\left(\mu\right):=\phi g_{0}\left(\mu\right)+P_{+}\left(0\vee\bar{b}\left(\psi-h\right)\right)\label{eq:xi_mu_def}
\end{equation}
that enters equation (\ref{eq:dg0_dmu}). The main results will be obtained in terms of
\begin{equation}
\xi_{0}:=\xi\left(0\right)=\phi\left(P_{+}\left(\bar{b}\left(f-\psi\right)\vee\bar{b}\left(h-\psi\right)\right)\right)-P_{+}\left(0\vee\bar{b}\left(h-\psi\right)\right).\label{eq:xi_mu0_def}
\end{equation}

\begin{prop}
For $\left|\mu\right|<1$, we have 
\begin{equation}
M_{0}^{2}\left(\mu\right)=M_{0}^{2}\left(0\right)-\sum_{k=0}^{\infty}\left(-1\right)^{k}\left(k+2\right)F\left(k\right)\mu^{k+1},\label{eq:M_series}
\end{equation}
\begin{equation}
e\left(\mu\right)=e\left(0\right)+2\sum_{k=0}^{\infty}\left(-1\right)^{k}F\left(k\right)\mu^{k+1}+\sum_{k=1}^{\infty}\left(-1\right)^{k}k\left[F\left(k\right)-F\left(k-1\right)\right]\mu^{k+1},\label{eq:e_series}
\end{equation}
where \textup{$F\left(k\right):=\left\langle \phi^{k}\xi_{0},\xi_{0}\right\rangle _{L^{2}\left(\mathbb{T}\right)}$,
$k\in\mathbb{N}_{+}$.}\end{prop}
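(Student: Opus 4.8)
The plan is to funnel the whole computation through the single operator identity $\xi(\mu)=(1+\mu\phi)^{-1}\xi_{0}$ and then expand in a Neumann series. Set $u:=P_{+}\left(\bar{b}(f-\psi)\vee\bar{b}(h-\psi)\right)$ and $v:=P_{+}\left(0\vee\bar{b}(h-\psi)\right)$, so that by (\ref{eq:g0_sol_fin}) one has $(1+\mu\phi)g_{0}(\mu)=u+\mu v$, by (\ref{eq:xi_mu_def}) $\xi(\mu)=\phi g_{0}(\mu)-v$ (note $P_{+}(0\vee\bar{b}(\psi-h))=-v$), and by (\ref{eq:xi_mu0_def}) $\xi_{0}=\phi u-v$. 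The first step is the identity
\[
(1+\mu\phi)\xi(\mu)=(1+\mu\phi)\bigl(\phi g_{0}(\mu)-v\bigr)=\phi(1+\mu\phi)g_{0}(\mu)-v-\mu\phi v=\phi(u+\mu v)-v-\mu\phi v=\phi u-v=\xi_{0},
\]
and since $1+\mu\phi$ is invertible on $H^{2}$ for $\mu>-1$ by (\ref{eq:spectrum}), this gives $\xi(\mu)=(1+\mu\phi)^{-1}\xi_{0}$.

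Second, I would feed this into the derivative formula (\ref{eq:M_monot}). As $\phi$ is self-adjoint and $\mu>-1$, the operator $(1+\mu\phi)^{-1}$ is self-adjoint and positive, so the inner product there is real and
\[
\frac{dM_{0}^{2}}{d\mu}=-2\left\langle (1+\mu\phi)^{-1}\xi(\mu),\xi(\mu)\right\rangle _{L^{2}(\mathbb{T})}=-2\left\langle (1+\mu\phi)^{-3}\xi_{0},\xi_{0}\right\rangle _{L^{2}(\mathbb{T})}.
\]
Since $\left\Vert \phi\right\Vert \leq1$ by (\ref{eq:spectrum}), for $\left|\mu\right|<1$ the operator $(1+\mu\phi)^{-3}$ is given by the norm-convergent binomial series $\sum_{k\geq0}\binom{-3}{k}\mu^{k}\phi^{k}$ with $\binom{-3}{k}=(-1)^{k}(k+1)(k+2)/2$; pairing with $\xi_{0}$ is legitimate termwise because $\left|F(k)\right|\leq\left\Vert \phi\right\Vert ^{k}\left\Vert \xi_{0}\right\Vert _{L^{2}(\mathbb{T})}^{2}\leq\left\Vert \xi_{0}\right\Vert _{L^{2}(\mathbb{T})}^{2}$, and it produces
\[
\frac{dM_{0}^{2}}{d\mu}=-\sum_{k=0}^{\infty}(-1)^{k}(k+1)(k+2)F(k)\,\mu^{k},\qquad\left|\mu\right|<1.
\]

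Third, I would integrate from $0$ to $\mu$ — legitimate since this power series converges uniformly on compact subsets of $(-1,1)$ and $M_{0}^{2}$ is $C^{1}$ by Proposition \ref{prop:monot} — which, using $\int_{0}^{\mu}\tau^{k}d\tau=\mu^{k+1}/(k+1)$, yields (\ref{eq:M_series}) directly. For (\ref{eq:e_series}) I would invoke the balance relation (\ref{eq:e-M_connect}), $de/d\mu=-(\mu+1)\,dM_{0}^{2}/d\mu$; writing $c_{k}:=(-1)^{k}(k+1)(k+2)F(k)$ one has $de/d\mu=\sum_{k}c_{k}\mu^{k}+\sum_{k}c_{k}\mu^{k+1}$, and integrating term by term gives $e(\mu)-e(0)=\sum_{k}c_{k}\mu^{k+1}/(k+1)+\sum_{k}c_{k}\mu^{k+2}/(k+2)=\sum_{k\geq0}(-1)^{k}(k+2)F(k)\mu^{k+1}-\sum_{k\geq1}(-1)^{k}kF(k-1)\mu^{k+1}$, where in the last sum I shifted the index by one. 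Splitting $(k+2)F(k)=2F(k)+kF(k)$ and regrouping the two sums then produces exactly the right-hand side of (\ref{eq:e_series}).

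I expect the one genuinely substantive point to be the identity $\xi(\mu)=(1+\mu\phi)^{-1}\xi_{0}$, which is what collapses everything onto powers of the single vector $\xi_{0}$; the remaining work is the (routine but bookkeeping-heavy) computation of the binomial coefficients $\binom{-3}{k}$ and the index shifts, while the convergence and interchange-of-summation steps are harmless because $\left\Vert \phi\right\Vert \leq1$ forces absolute convergence for $\left|\mu\right|<1$.
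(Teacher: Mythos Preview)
Your proof is correct and in fact takes a cleaner route than the paper's. The paper proceeds by introducing the family $A_{k}(\mu):=\langle (1+\mu\phi)^{-k}\phi^{k-1}\xi(\mu),\xi(\mu)\rangle$, deriving the infinite linear ODE system $A_{k}'(\mu)=-(k+2)A_{k+1}(\mu)$ from $\xi'(\mu)=-(1+\mu\phi)^{-1}\phi\,\xi(\mu)$, and then solving it via an upper-triangular matrix exponential to obtain $A_{1}(\mu)=-\tfrac{1}{2}\,dM_{0}^{2}/d\mu$ as a power series. Your algebraic identity $(1+\mu\phi)\xi(\mu)=\xi_{0}$, i.e.\ $\xi(\mu)=(1+\mu\phi)^{-1}\xi_{0}$, short-circuits all of that: it is precisely the closed-form solution of the paper's ODE for $\xi(\mu)$, and once it is in hand the formula $dM_{0}^{2}/d\mu=-2\langle(1+\mu\phi)^{-3}\xi_{0},\xi_{0}\rangle$ follows in one line, after which the binomial expansion of $(1+\mu\phi)^{-3}$ gives (\ref{eq:dM_series}) directly. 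The paper itself notes this Neumann-series shortcut in the Remark following the proof, but only in the special case $\psi\equiv0$, $h\equiv0$; you have observed that the same mechanism works in general, with the resolvent identity for $\xi(\mu)$ replacing any explicit formula for $g_{0}$. The remaining integration and index-shift bookkeeping matches the paper's, and your convergence justification via $\lVert\phi\rVert\le1$ is the same one used there.
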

\begin{proof}
Consider, for $k\in\mathbb{N}_{+}$, $\mu>-1$,

\[
A_{k}\left(\mu\right):=\left\langle \left(1+\mu\phi\right)^{-k}\phi^{k-1}\xi\left(\mu\right),\xi\left(\mu\right)\right\rangle _{L^{2}\left(\mathbb{T}\right)}.
\]
Since $\xi^{\prime}\left(\mu\right)=\phi\dfrac{dg_{0}}{d\mu}=-\left(1+\mu\phi\right)^{-1}\phi\xi\left(\mu\right)$
(according to (\ref{eq:dg0_dmu})), it follows that
\begin{eqnarray*}
A_{k}^{\prime}\left(\mu\right) & = & -k\left\langle \left(1+\mu\phi\right)^{-k-1}\phi^{k}\xi\left(\mu\right),\xi\left(\mu\right)\right\rangle _{L^{2}\left(\mathbb{T}\right)}-\left\langle \left(1+\mu\phi\right)^{-k-1}\phi^{k}\xi\left(\mu\right),\xi\left(\mu\right)\right\rangle _{L^{2}\left(\mathbb{T}\right)}\\
 &  & -\left\langle \left(1+\mu\phi\right)^{-k}\phi^{k-1}\xi\left(\mu\right),\left(1+\mu\phi\right)^{-1}\phi\xi\left(\mu\right)\right\rangle _{L^{2}\left(\mathbb{T}\right)},
\end{eqnarray*}
and we thus arrive at the infinite-dimensional linear dynamical system
\begin{equation}
\begin{cases}
A_{k}^{\prime}\left(\mu\right) & =-\left(k+2\right)A_{k+1}\left(\mu\right),\\
A_{k}\left(0\right) & =\left\langle \phi^{k-1}\xi_{0},\xi_{0}\right\rangle _{L^{2}}=:F\left(k-1\right),
\end{cases}\hspace{2em}k\in\mathbb{N}_{+}.\label{eq:dyn_sys}
\end{equation}
Introduce the matrix $\cal{M}$ whose powers are upper-diagonal with evident structure
\[
\mathcal{M}=\left(\begin{array}{ccccc}
0 & -3 & 0 & 0 & \dots\\
0 & 0 & -4 & 0 & \dots\\
0 & 0 & 0 & -5 & \dots\\
0 & 0 & 0 & 0 & \dots\\
\dots & \dots & \dots & \dots & \dots
\end{array}\right),\hspace{0.5em}\mathcal{M}^{2}=\left(\begin{array}{ccccc}
0 & 0 & \left(-3\right)\left(-4\right) & 0 & \dots\\
0 & 0 & 0 & \left(-4\right)\left(-5\right) & \dots\\
0 & 0 & 0 & 0 & \dots\\
0 & 0 & 0 & 0 & \dots\\
\dots & \dots & \dots & \dots & \dots
\end{array}\right),\hspace{0.5em}\dots,
\] which makes the matrix exponential $e^{\mathcal{M}}$ easily computable.
Then, due to such a structure, the system (\ref{eq:dyn_sys})
is readily solvable, but of particular interest is the first component
of the solution vector 
\begin{eqnarray*}
A_{1}\left(\mu\right) & = & \sum_{k=1}^{\infty}\left[e^{\mathcal{M}\mu}\right]_{1,k}F\left(k-1\right)=\sum_{k=0}^{\infty}\left(-1\right)^{k}\dfrac{\left(k+2\right)!}{2}\frac{\mu^{k}}{k!}F\left(k\right),
\end{eqnarray*}
where the series converges for $\left|\mu\right|<1$ since $F\left(k\right)$
is  bounded by $\left\Vert \xi_{0}\right\Vert _{H^{2}}^{2} = A_{1}\left(0\right) = F\left(0\right)$,
as the Toeplitz operator $\phi$ is a contraction: $F\left(k\right)$ slowly decays
to zero with $k$ (see also plots and discussion at the end of Section \ref{sec:num_res}).\\
On the other hand, observe that, due to (\ref{eq:M_monot}), $A_{1}\left(\mu\right)=-\dfrac{1}{2}\dfrac{dM_{0}^{2}}{d\mu}$
and thus 
\begin{equation}
\dfrac{dM_{0}^{2}}{d\mu}=-\sum_{k=0}^{\infty}\left(-1\right)^{k}\left(k+1\right)\left(k+2\right)\mu^{k}F\left(k\right).\label{eq:dM_series}
\end{equation}
Finally, termwise integration of (\ref{eq:dM_series}) and use of
(\ref{eq:e-M_connect}) followed by rearrangement of terms furnish
the results (\ref{eq:M_series})-(\ref{eq:e_series}).\end{proof}
\begin{rem}
Note that when set $\psi\equiv0$, $h\equiv0$, it is seen that (\ref{eq:dM_series})
can be obtained directly from (\ref{eq:g0_sol_fin}), (\ref{eq:M_monot}) which now reads
\[
\dfrac{dM_{0}^{2}}{d\mu}=-2\text{Re}\left\langle \left(1+\mu\phi\right)^{-3}\phi^{2}P_{+}\left(\bar{b}f\vee0\right),P_{+}\left(\bar{b}f\vee0\right)\right\rangle _{L^{2}\left(\mathbb{T}\right)}.
\]
The result follows since a Neumann series (defining an analytic function for
$\left|\mu\right|<1$) is differentiable:
\[
\left(1+\mu\phi\right)^{-1}=\sum_{k=0}^{\infty}\left(-1\right)^{k}\mu^{k}\phi^{k}\hspace{1em}\Rightarrow\hspace{1em}\left(1+\mu\phi\right)^{-3}=\dfrac{1}{2}\sum_{k=0}^{\infty}\left(-1\right)^{k}\left(k+1\right)\left(k+2\right)\mu^{k}\phi^{k}.
\]
 
\end{rem}
We can also get some insight in behavior of $F\left(k\right)$ which
lies in the heart of the series expansions (\ref{eq:M_series})-(\ref{eq:e_series})
that will allow us to infer the bounds (\ref{eq:e_M0_asympt}). First,
we need the following
\begin{lem}
\label{lem:Fk_sum_fin} The sequence $\left\{F\left(k\right)\right\}_{k=0}^{\infty}$ is Abel summable%
\footnote{By such summability we mean that $\sum_{k=0}^{\infty}\mu^{k}F\left(k\right)$
converges for all $\left|\mu\right|<1$ and the limit $\underset{\mu\nearrow1}{\lim}\sum_{k=0}^{\infty}\mu^{k}F\left(k\right)$
exists and is finite.%
} and it holds true that
\begin{equation}
\lim_{\mu\rightarrow-1}\sum_{k=0}^{\infty}\left(-\mu\right)^{k}F\left(k\right)=e\left(0\right)<\infty.\label{eq:Fk_sum}
\end{equation}
\end{lem}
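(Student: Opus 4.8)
The plan is to recognize the Abel sum of $\{F(k)\}$ as a resolvent quadratic form in the Toeplitz operator $\phi$ and then to evaluate its limit through the spectral theorem. Since $F(k)=\langle\phi^{k}\xi_0,\xi_0\rangle_{L^2(\mathbb{T})}$ and $\phi$ is a contraction by \eqref{eq:spectrum}, one has $|F(k)|\le\|\xi_0\|_{H^2}^{2}$, so $\sum_{k\ge0}(-\mu)^{k}F(k)$ converges absolutely for $|\mu|<1$; summing the Neumann series for $(1+\mu\phi)^{-1}$ gives
\[
\sum_{k=0}^{\infty}(-\mu)^{k}F(k)=\bigl\langle(1+\mu\phi)^{-1}\xi_0,\xi_0\bigr\rangle_{L^2(\mathbb{T})},\qquad|\mu|<1 .
\]
It thus remains to show this quantity tends to the finite value $e(0)$ as $\mu\to-1$.

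Next I would pass to the spectral side. As $\phi$ is self-adjoint with $\sigma(\phi)=[0,1]$, the spectral theorem furnishes a positive finite Borel measure $\nu$ on $[0,1]$ (the spectral measure of $\phi$ associated with $\xi_0$) such that $\langle(1+\mu\phi)^{-1}\xi_0,\xi_0\rangle=\int_{[0,1]}(1+\mu t)^{-1}\,d\nu(t)$ for $\mu>-1$. For each $t\in[0,1)$ the integrand increases monotonically to $(1-t)^{-1}$ as $\mu\searrow-1$, so by the monotone convergence theorem
\[
\lim_{\mu\to-1}\sum_{k=0}^{\infty}(-\mu)^{k}F(k)=\int_{[0,1]}\frac{d\nu(t)}{1-t}\in[0,+\infty] .
\]
Hence the lemma reduces to the identity $\int_{[0,1]}(1-t)^{-1}\,d\nu(t)=e(0)$; finiteness then comes for free since $e(0)=\|\psi+bg_0(0)-f\|_{L^2(I)}^{2}<\infty$ (and, a posteriori, $\nu(\{1\})=0$, which also follows from $1\notin\sigma_p(\phi)$ via Corollary \ref{cor:uniqueness_by_subset}).

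To obtain that identity I would use the differential relations of Section \ref{sec:comp_issues}. Substituting \eqref{eq:g0_sol_fin} into the definition \eqref{eq:xi_mu_def} of $\xi(\mu)$ and simplifying yields $\xi(\mu)=(1+\mu\phi)^{-1}\xi_0$; then \eqref{eq:M_monot} reads $dM_0^{2}/d\mu=-2\langle(1+\mu\phi)^{-1}\xi(\mu),\xi(\mu)\rangle=-2\langle(1+\mu\phi)^{-3}\xi_0,\xi_0\rangle$, and \eqref{eq:e-M_connect} gives $de/d\mu=2(\mu+1)\langle(1+\mu\phi)^{-3}\xi_0,\xi_0\rangle=2(\mu+1)\int_{[0,1]}(1+\mu t)^{-3}\,d\nu(t)$. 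Integrating from $-1$ to $0$, using $e(\mu)\to0$ as $\mu\searrow-1$ from \eqref{eq:e_mu_lowlimit} together with Tonelli's theorem (nonnegative integrand),
\[
e(0)=\int_{[0,1]}\Bigl[\int_{-1}^{0}\frac{2(\mu+1)}{(1+\mu t)^{3}}\,d\mu\Bigr]d\nu(t) .
\]
The inner integral is elementary: writing $\mu+1=t^{-1}(1+\mu t)+t^{-1}(t-1)$ and integrating the resulting powers of $(1+\mu t)$ gives exactly $(1-t)^{-1}$. Therefore $e(0)=\int_{[0,1]}(1-t)^{-1}\,d\nu(t)$, which matches the limit found above and proves \eqref{eq:Fk_sum}.

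The only delicate points are the two interchanges of limit and integration, but both are harmless: the integrands are nonnegative and $e$ is monotone with a known zero limit at $\mu=-1$, so the monotone convergence and Tonelli theorems apply verbatim. The remaining ingredients — the algebra giving $\xi(\mu)=(1+\mu\phi)^{-1}\xi_0$ and the partial-fraction evaluation of the inner $\mu$-integral — are routine and pose no real obstacle.
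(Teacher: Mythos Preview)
Your argument is correct and complete: the identification $\xi(\mu)=(1+\mu\phi)^{-1}\xi_0$ checks out from \eqref{eq:g0_pre_sol_fin}, the spectral representation and monotone convergence are applied cleanly, and the inner $\mu$-integral indeed evaluates to $(1-t)^{-1}$ (with the correct value $1$ at $t=0$).

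Your route, however, differs substantially from the paper's. The paper stays entirely on the series side: it rewrites \eqref{eq:e_series} via summation by parts as
\[
e(\mu)=e(0)+(\mu+2)\sum_{k\ge0}(-\mu)^{k+1}F(k)+(\mu+1)\sum_{k\ge1}(-\mu)^{k+1}kF(k),
\]
then invokes $(\mu+1)M_0^2(\mu)\to0$ from \eqref{eq:mu_lowlimit} together with \eqref{eq:M_series} to kill the last term as $\mu\searrow-1$, and reads off \eqref{eq:Fk_sum} from \eqref{eq:e_mu_lowlimit}. You instead bypass the series expansions \eqref{eq:M_series}--\eqref{eq:e_series} and the auxiliary fact \eqref{eq:mu_lowlimit} altogether, recognizing the Abel sum directly as the resolvent quadratic form $\langle(1+\mu\phi)^{-1}\xi_0,\xi_0\rangle$ and computing the limit via the spectral measure and an explicit double integral. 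Your approach is more conceptual and self-contained (it needs only \eqref{eq:e-M_connect}, \eqref{eq:M_monot}, and \eqref{eq:e_mu_lowlimit}), at the cost of importing the spectral theorem earlier than the paper does; the paper's argument, by contrast, stays within the combinatorics of the $F(k)$ but leans on the integrability estimate of Proposition~\ref{prop:M0_integr}.
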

\begin{proof}
Set $R_{\mu}\left(N\right):=\sum_{k=1}^{N}\left[F\left(k\right)-F\left(k-1\right)\right]k\left(-\mu\right)^{k}$
and apply summation by parts formula
\[
R_{\mu}\left(N\right)=F\left(N\right)\left(N+1\right)\left(-\mu\right)^{N+1}+\mu F\left(0\right)-\sum_{k=1}^{N}F\left(k\right)\left(\left(-\mu\right)^{k+1}\left(k+1\right)-\left(-\mu\right)^{k}k\right).
\]
Passing to the limit and rearranging the terms, we obtain 
\[
\lim_{N\rightarrow\infty}R_{\mu}\left(N\right)=-\sum_{k=0}^{\infty}\left(-\mu\right)^{k+1}F\left(k\right)+\left(\mu+1\right)\sum_{k=1}^{\infty}\left(-\mu\right)^{k}kF\left(k\right),
\]
and hence it follows from (\ref{eq:e_series}) that
\begin{equation}
e\left(\mu\right)=e\left(0\right)+\left(\mu+2\right)\sum_{k=0}^{\infty}\left(-\mu\right)^{k+1}F\left(k\right)+\left(\mu+1\right)\sum_{k=1}^{\infty}\left(-\mu\right)^{k+1}kF\left(k\right).\label{eq:e_series_alt}
\end{equation}

Combining the local integrability of $M_{0}^{2}\left(\mu\right)$, equivalent to (\ref{eq:mu_lowlimit}), with the series expansion (\ref{eq:M_series}),
we conclude that:
\[
\left(\mu+1\right)\sum_{k=1}^{\infty}\left(-\mu\right)^{k}kF\left(k\right)\rightarrow0 \mbox{ as } \mu\searrow-1 \, . 
\]
Therefore, taking the limit $\mu\searrow-1$
in (\ref{eq:e_series_alt}), the result (\ref{eq:Fk_sum}) follows
due to (\ref{eq:e_mu_lowlimit}). 
\end{proof}
Now, without getting into detail of concrete spectral theory of Toeplitz
operators, we only employ existence of a unitary transformation $U:\, H^{2}\rightarrow L_{\lambda}^{2}\left(\sigma\right)$
onto the spectral space where the Toeplitz operator is diagonal, meaning
that its action simply becomes a multiplication by the spectral variable
$\lambda$. Existence of such an isometry along with information on the
spectrum of $\phi$ (Hartman-Wintner theorem, see Appendix), $\sigma=\left[0,1\right]$,
and an assumption on the constant spectral density%
\footnote{Such an assumption is reasonable since the operator symbol $\chi_{J}$
is the simplest in a sense that it does not differ from one point
to another in the region where it is non-zero and therefore the spectral
mapping is anticipated to be uniform. Precise expression for the constant
$\rho_{0}$ can be found in \cite{Baratchart-Grimm,Rosenblum}.%
} $\rho_{0}>0$ make the following representation possible 
\begin{equation}
F\left(k\right)=\int_{0}^{1}\lambda^{k}\left|\left(U\xi_{0}\right)\left(\lambda\right)\right|^{2}\rho_{0}d\lambda\label{eq:Fk_spec_repr}
\end{equation}
with $\int_{0}^{1}\left|\left(U\xi_{0}\right)\left(\lambda\right)\right|^{2}\rho_{0}d\lambda=\left\Vert \xi_{0}\right\Vert _{H^{2}}^{2}.$

All the essential information on asymptotics (\ref{eq:e_M0_asympt})
is contained in behavior of $\left(U\xi_{0}\right)\left(\lambda\right)$
near $\lambda=1$. Even though $\left(U\xi_{0}\right)\left(\lambda\right)$
can be computed since $\xi_{0}$ is a fixed function defined by
(\ref{eq:xi_mu0_def}) and the concrete spectral theory describes
explicit action of the transformation $U$ \cite{Baratchart-Grimm,Rosenblum-Rovnyak},
we avoid these details and proceed by deriving essential estimates
invoking only rather intuitive arguments on the behavior of the resulting
function $\left(U\xi_{0}\right)\left(\lambda\right)$.

Considering $-1<\mu<0$ in what follows, we, first of all, claim that
the function $\left(U\xi_{0}\right)\left(\lambda\right)$ must necessarily
decrease to zero as $\lambda\nearrow1$. Indeed, even though $L^{2}$-behavior
allows to have an integrable singularity at $\lambda=1$, we note
that even if regularity was assumed, that is $\underset{\lambda\rightarrow1}{\lim}\left|\left(U\xi_{0}\right)\left(\lambda\right)\right|^{2}=C$
for some $C>0$, after summation of a geometric series, we would have
\[
\dfrac{1}{\rho_{0}}\sum_{k=0}^{\infty}\left(-\mu\right)^{k}F\left(k\right)\geq C_{0}\sum_{k=0}^{\infty}\int_{1-\delta}^{1}\left(-\mu\lambda\right)^{k}d\lambda=C_{0}\int_{1-\delta}^{1}\frac{1}{1+\mu\lambda}d\lambda=\frac{C_{0}}{\mu}\log\left(\frac{1+\mu}{1+\mu-\mu\delta}\right)
\]
for some $0<C_{0}\leq C$ and sufficiently small fixed $\delta>0$.
The right-hand side here grows arbitrary large as $\mu$ comes closer
to $-1$ contradicting the boundedness prescribed by Lemma \ref{lem:Fk_sum_fin}.
Therefore, the decay to zero of $\left(U\xi_{0}\right)\left(\lambda\right)$
as $\lambda\nearrow1$ is necessary. 

Next, it is natural to proceed by checking if a very mild (meaning
slower than any power) decay to zero can be reconciled with the previously
obtained results. Namely, we consider $\left(U\xi_{0}\right)\left(\lambda\right)$
such that 
\begin{equation}
\left|\left(U\xi_{0}\right)\left(\lambda\right)\right|^{2}={\cal O}\left(\left|\log\left(1-\lambda\right)\right|^{-l}\right)\hspace{1em}\text{as}\hspace{1em}\lambda\nearrow1,\label{eq:U_log}
\end{equation}
for $l>1$.
This entails the following result generalizing (\ref{eq:e_M0_asympt}), see also Remarks \ref{rem:l2}, \ref{rem:l22}.
\begin{prop}
Under assumption (\ref{eq:U_log}) with $l >1$, the solution blow-up
and approximation rates near $\mu=-1$, respectively, are as follows 
\begin{equation}
M_{0}^{2}\left(\mu\right)={\cal O}\left(\frac{1}{1+\mu}\left|\log\left(1+\mu\right)\right|^{-l}\right),\hspace{1em}e\left(\mu\right)={\cal O}\left(\left|\log\left(1+\mu\right)\right|^{-l+1}\right).\label{eq:M0_e_asympt_gen}
\end{equation}
\end{prop}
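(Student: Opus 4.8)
The plan is to turn the series identities of the preceding Proposition into closed one-dimensional integral formulas via the spectral representation (\ref{eq:Fk_spec_repr}), and then to read off the behaviour near $\mu=-1$ from elementary estimates on scalar integrals. Write $w(\lambda):=\rho_{0}\,|(U\xi_{0})(\lambda)|^{2}\ge 0$, so that $F(k)=\int_{0}^{1}\lambda^{k}w(\lambda)\,d\lambda$ and $\int_{0}^{1}w(\lambda)\,d\lambda=\|\xi_{0}\|_{H^{2}}^{2}<\infty$. Substituting $F(k)$ into (\ref{eq:dM_series}) and summing $\sum_{k\ge 0}(k+1)(k+2)x^{k}=2(1-x)^{-3}$ with $x=-\mu\lambda$ — the interchange of sum and integral being legitimate for $-1<\mu\le 0$ by Tonelli, since then every term $(k+1)(k+2)(-\mu\lambda)^{k}w(\lambda)$ is nonnegative — yields
\[
-\frac{dM_{0}^{2}}{d\mu}=2\int_{0}^{1}\frac{w(\lambda)}{(1+\mu\lambda)^{3}}\,d\lambda,\qquad -1<\mu\le 0 .
\]

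Integrating this identity in $\mu$, and likewise integrating $de/d\mu=-(\mu+1)\,dM_{0}^{2}/d\mu$ (relation (\ref{eq:e-M_connect})) from $-1$ to $\mu$ — using $e(\mu)\searrow 0$ as $\mu\searrow-1$ (equation (\ref{eq:e_mu_lowlimit})) together with the local integrability of $M_{0}^{2}$ near $\mu=-1$ (Proposition \ref{prop:M0_integr}) and the limit (\ref{eq:mu_lowlimit}) to make the latter integral well defined — one gets, after the elementary substitution $u=1+\tau\lambda$ and a Tonelli interchange, for $-1<\mu\le 0$,
\[
M_{0}^{2}(\mu)=M_{0}^{2}(0)+\int_{0}^{1}\frac{(-\mu)(2+\mu\lambda)}{(1+\mu\lambda)^{2}}\,w(\lambda)\,d\lambda,\qquad e(\mu)=(1+\mu)^{2}\int_{0}^{1}\frac{w(\lambda)}{(1-\lambda)(1+\mu\lambda)^{2}}\,d\lambda .
\]
(The finiteness of the last integral, in particular at $\mu=0$ where it must equal $e(0)$, is exactly the Abel-summability statement of Lemma \ref{lem:Fk_sum_fin} and is guaranteed by the standing hypothesis $l>1$.)

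From here everything is scalar asymptotics as $\varepsilon:=1+\mu\searrow 0$. Split each integral at $\lambda=1-\delta$ for a fixed small $\delta>0$. On $[0,1-\delta]$ one has $1+\mu\lambda\ge\delta$ uniformly for $\mu$ near $-1$, and $1/(1-\lambda)\le 1/\delta$, so (using $\int_{0}^{1}w<\infty$) these pieces contribute only $\mathcal{O}(1)$ to $M_{0}^{2}$ and $\mathcal{O}(\varepsilon^{2})$ to $e(\mu)$; since $\varepsilon^{-1}|\log\varepsilon|^{-l}\to\infty$ and $|\log\varepsilon|^{-l+1}\to 0$, both are negligible against the rates claimed in (\ref{eq:M0_e_asympt_gen}). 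On $[1-\delta,1]$ substitute $s=1-\lambda$; then $1+\mu\lambda=\varepsilon+(1-\varepsilon)s$, whence $\tfrac12(\varepsilon+s)\le 1+\mu\lambda\le\varepsilon+s$ for $\varepsilon\le\tfrac12$, and by the hypothesis (\ref{eq:U_log}) we have $w(1-s)\lesssim |\log s|^{-l}$ on $(0,\delta)$. The matter thus reduces to bounding $\int_{0}^{\delta}|\log s|^{-l}(\varepsilon+s)^{-2}\,ds$ for $M_{0}^{2}$, and $\varepsilon^{2}\int_{0}^{\delta}|\log s|^{-l}\,s^{-1}(\varepsilon+s)^{-2}\,ds$ for $e(\mu)$.

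Split these once more at $s=\varepsilon$. On $(0,\varepsilon)$, where $\varepsilon+s\asymp\varepsilon$, one uses $\int_{0}^{\varepsilon}|\log s|^{-l}\,ds\lesssim\varepsilon|\log\varepsilon|^{-l}$ (the integrand being monotone with value $|\log\varepsilon|^{-l}$ at $s=\varepsilon$) and $\int_{0}^{\varepsilon}|\log s|^{-l}s^{-1}\,ds=(l-1)^{-1}|\log\varepsilon|^{-l+1}$ exactly (the antiderivative $|\log s|^{-l+1}/(l-1)$ vanishes at $0$ precisely because $l>1$), so the inner pieces contribute $\lesssim\varepsilon^{-1}|\log\varepsilon|^{-l}$ to $M_{0}^{2}$ and $\lesssim|\log\varepsilon|^{-l+1}$ to $e(\mu)$. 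On $(\varepsilon,\delta)$, where $\varepsilon+s\asymp s$, one integration by parts (taking $\delta$ small enough to absorb the resulting lower-order term) gives $\int_{\varepsilon}^{\delta}s^{-2}|\log s|^{-l}\,ds\lesssim\varepsilon^{-1}|\log\varepsilon|^{-l}$ and $\varepsilon^{2}\int_{\varepsilon}^{\delta}s^{-3}|\log s|^{-l}\,ds\lesssim|\log\varepsilon|^{-l}$. Summing the contributions gives $M_{0}^{2}(\mu)=\mathcal{O}\bigl(\varepsilon^{-1}|\log\varepsilon|^{-l}\bigr)$ and $e(\mu)=\mathcal{O}\bigl(|\log\varepsilon|^{-l+1}\bigr)$, which is (\ref{eq:M0_e_asympt_gen}); specialising to $l=2$ recovers (\ref{eq:e_M0_asympt}). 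Two points deserve care. For $M_{0}^{2}$ both ranges $s<\varepsilon$ and $s>\varepsilon$ are of the same order, whereas for $e(\mu)$ the range $s>\varepsilon$ contributes only $\mathcal{O}(|\log\varepsilon|^{-l})$, strictly smaller, so the leading term $|\log\varepsilon|^{-l+1}$ comes from $s<\varepsilon$ alone. And the one genuinely nontrivial ingredient — flagged already in the text preceding (\ref{eq:Fk_spec_repr}) — is the passage to the diagonalising unitary $U$ with constant spectral density $\rho_{0}$, i.e. the concrete spectral theory of the Toeplitz operator $\phi$, which we take as granted; everything else above is routine.
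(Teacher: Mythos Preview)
Your argument is correct. Both you and the paper start from the same spectral representation $F(k)=\int_0^1\lambda^k w(\lambda)\,d\lambda$ and reduce $M_0^2$ to bounding $\int_0^1 w(\lambda)(1+\mu\lambda)^{-2}\,d\lambda$ near $\mu=-1$; the difference lies in how that integral is estimated. The paper chooses $\mu$-dependent split points $\lambda_0,\,1-\delta_1,\,1-\delta_2$ with $\delta_1,\delta_2$ calibrated in terms of $(1+\mu)$ and powers of $|\log(1+\mu)|$, then handles the innermost piece via the change of variable $\beta=-\log(1-\lambda)$ and the leading-order asymptotics of an incomplete gamma function; $e(\mu)$ is then recovered only by quoting integration of (\ref{eq:e-M_connect}). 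You instead fix a single $\delta$, pass to $s=1-\lambda$, and split once more at $s=\varepsilon=1+\mu$, using only monotonicity of $|\log s|^{-l}$ on $(0,\varepsilon)$ and a single integration by parts on $(\varepsilon,\delta)$ with absorption of the lower-order remainder. You also derive the closed form $e(\mu)=(1+\mu)^2\int_0^1 w(\lambda)\,(1-\lambda)^{-1}(1+\mu\lambda)^{-2}\,d\lambda$ and estimate it directly, which makes transparent where the exponent $-l+1$ comes from (namely the exact antiderivative $\int_0^\varepsilon s^{-1}|\log s|^{-l}\,ds=(l-1)^{-1}|\log\varepsilon|^{-l+1}$) and why the range $s>\varepsilon$ contributes only the smaller order $|\log\varepsilon|^{-l}$. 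Your route is shorter and more elementary; the paper's splitting, with its tunable parameters $\gamma,\beta$, is designed to probe whether the bound can be improved (cf.\ Remark~\ref{rem:l2}), which your streamlined estimate does not attempt.
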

\begin{proof}
Choose a constant $0<\lambda_{0}<1$ sufficiently close to $1$ so
that the asymptotic (\ref{eq:U_log}) is applicable. 
Therefore, we can write
\[
\dfrac{1}{\rho_{0}}\sum_{k=0}^{\infty}\left(-\mu\right)^{k}F\left(k\right) 
 =  S_{1}+S_{2}+S_{3}
\]
\[
:= \int_{0}^{\lambda_{0}}\frac{1}{1+\mu\lambda}\left|\left(U\xi_{0}\right)\left(\lambda\right)\right|^{2}d\lambda+\left(\int_{\lambda_{0}}^{1-\delta_{0}}+\int_{1-\delta_{0}}^{1}\right)\frac{1}{1+\mu\lambda}\left(-\log\left(1-\lambda\right)\right)^{-l}d\lambda \, .
\]
The first integral here is bounded regardless of the value of $\mu$:
\[
S_{1}\leq\frac{1}{1+\mu\lambda_{0}}\int_{0}^{1}\left|\left(U\xi_{0}\right)\left(\lambda\right)\right|^{2}d\lambda=\frac{1}{\left(1+\mu\lambda_{0}\right)\rho_{0}}\left\Vert \xi_{0}\right\Vert _{H^{2}}^{2}.
\]
To deal with $S_{3}$, we perform the change of variable $\beta=-\log\left(1-\lambda\right)$
and bound the factor $\dfrac{1}{\beta^{l}}\leq\left(-\log\delta_{0}\right)^{-l}$
to obtain
\[
\int_{-\log\delta_{0}}^{\infty}\frac{1}{\beta^{l}}\frac{e^{-\beta}}{1+\mu-\mu e^{-\beta}}d\beta\leq\frac{1}{\left(-\mu\right)\left(-\log\delta_{0}\right)^{l}}\log\left(1-\frac{\mu\delta_{0}}{1+\mu}\right)\leq\frac{\log2}{\left(-\mu\right)\left|\log\left(1+\mu\right)-\log\left(-\mu\right)\right|^{l}}
\]
providing we choose $\delta_{0}\leq\dfrac{1+\mu}{\left(-\mu\right)}$.
The quantity on the right is ${\cal O}\left(\left|\log\left(1+\mu\right)\right|^{-l}\right)$
in the vicinity of $\mu=-1$.\\
It remains to estimate $S_{2}$. The change of variable $\eta=1-\lambda$
leads to 
\begin{eqnarray*}
S_{2} & = & \int_{\delta_{0}}^{1-\lambda_{0}}\frac{\eta}{1+\mu-\mu\eta}\frac{1}{\eta\left(-\log\eta\right)^{l}}d\eta\leq\left(\int_{\delta_{0}}^{1-\lambda_{0}}\frac{d\eta}{\eta\left(-\log\eta\right)^{l}}\right)\underset{\eta\in\left[\delta_{0},1-\lambda_{0}\right]}{\sup}\left(\frac{\eta}{1+\mu-\mu\eta}\right)\\
& \leq & \frac{1}{l-1}\left(\frac{1}{\left|\log\delta_{0}\right|^{l-1}}-\frac{1}{\left|\log\left(1-\lambda_{0}\right)\right|^{l-1}}\right)\frac{1-\lambda_{0}}{1+\mu\lambda_{0}}.
\end{eqnarray*}
Therefore, we conclude that the choice (\ref{eq:U_log}) with
$l>1$ does not contradict the finiteness imposed by Lemma \ref{lem:Fk_sum_fin}
anymore and we move on to obtain the growth rate for $M_{0}^{2}\left(\mu\right)$
near $\mu=-1$. Recalling (\ref{eq:M_series}) and that $\sum_{k=0}^{\infty}\left(-\mu\lambda\right)^{k}\left(k+1\right)=\dfrac{1}{\left(1+\mu\lambda\right)^{2}}$,
we now have
\[
\dfrac{1}{\rho_{0}}\sum_{k=0}^{\infty}\left(-\mu\right)^{k}\left(k+1\right)F\left(k\right) 
=  R_{1}+R_{2}+R_{3}+R_{4}
\]
\[
:=  \int_{0}^{\lambda_{0}}\frac{1}{\left(1+\mu\lambda\right)^{2}}\left|\left(U\xi_{0}\right)\left(\lambda\right)\right|^{2}d\lambda+\left(\int_{\lambda_{0}}^{1-\delta_{1}}+\int_{1-\delta_{1}}^{1-\delta_{2}}+\int_{1-\delta_{2}}^{1}\right)\frac{1}{\left(1+\mu\lambda\right)^{2}}\left(-\log\left(1-\lambda\right)\right)^{-l}d\lambda \, .
\]
As before, we estimate
\[
R_{1}\leq\frac{1}{\left(1+\mu\lambda_{0}\right)^{2}}\int_{0}^{1}\left|\left(U\xi_{0}\right)\left(\lambda\right)\right|^{2}d\lambda=\frac{1}{\left(1+\mu\lambda_{0}\right)^{2}\rho_{0}}\left\Vert \xi_{0}\right\Vert _{H^{2}}^{2},
\]
whereas the rest is now split into 3 parts and we start with the last term
and decide on proper size of $\delta_{2}$ in
\[
R_{4}=\int_{1-\delta_{2}}^{1}\frac{1}{\left(1+\mu\lambda\right)^{2}}\frac{1}{\left|-\log\left(1-\lambda\right)\right|^{l}}d\lambda.
\]
Again, under the change of variable $\beta=-\log\left(1-\lambda\right)$,
this becomes
\begin{eqnarray*}
R_{4}&=&\frac{1}{\left(1+\mu\right)^{2}}\int_{-\log\delta_{2}}^{\infty}\frac{e^{-\beta}}{\beta^{l}}\frac{1}{\left(1-\frac{\mu}{1+\mu}e^{-\beta}\right)^{2}}d\beta\\
&=&\dfrac{1}{\left(1+\mu\right)^{2}}\sum_{k=0}^{\infty}\left(\frac{\mu}{1+\mu}\right)^{k}\left(k+1\right)\int_{-\log\delta_{2}}^{\infty}\frac{e^{-\left(k+1\right)\beta}}{\beta^{l}}d\beta,
\end{eqnarray*}
where the series expansion is valid for $\delta_{2}<\dfrac{1+\mu}{\left(-\mu\right)}$.
The integral on the right is the incomplete gamma function (see, for
instance, \cite{Abramowitz-Stegun}) whose asymptotic expansion for
large values of $\left(-\log\delta_{2}\right)$ can be easily obtained
with integration by parts. In particular, at the leading order we
have 
\begin{eqnarray*}
\int_{-\log\delta_{2}}^{\infty}\frac{e^{-\left(k+1\right)\beta}}{\beta^{l}}d\beta&=&\left(k+1\right)^{l-1}\int_{-\left(k+1\right)\log\delta_{2}}^{\infty}\frac{e^{-\beta}}{\beta^{l}}d\beta\\
&=&\left(k+1\right)^{l-1}\delta_{2}^{k+1}\left(-\left(k+1\right)\log\delta_{2}\right)^{-l}\left[1+{\cal O}\left(\frac{1}{\left(k+1\right)\left|\log\delta_{2}\right|}\right)\right],
\end{eqnarray*}
and hence
\[
R_{4}=\frac{\delta_{2}}{\left(1+\mu\right)^{2}}\frac{1}{\left(-\log\delta_{2}\right)^{l}}\sum_{k=0}^{\infty}\left(\frac{\mu\delta_{2}}{1+\mu}\right)^{k}=\frac{\delta_{2}}{\left(1+\mu\right)^{2}}\frac{1}{\left(-\log\delta_{2}\right)^{l}}\frac{1}{1-\frac{\mu\delta_{2}}{1+\mu}}.
\]
Fixing $\delta_{2}=\dfrac{1}{2}\dfrac{1+\mu}{\left(-\mu\right)}$,
we arrive at
\[
R_{4}=\frac{1}{\left(-\mu\right)\left(1+\mu\right)}\frac{1}{\left[-\log\left(1+\mu\right)+\log\left(-\mu\right)+\log 2\right]^{l}}.
\]
To estimate $R_{2}$ and $R_{3}$, we use change of variable $\eta=1-\lambda$.
Similarly to $S_{2}$, we have
\[
R_{2}=\int_{\delta_{1}}^{1-\lambda_{0}}\frac{\eta}{\left(1+\mu-\mu\eta\right)^{2}}\frac{1}{\eta\left(-\log\eta\right)^{l}}d\eta\leq\left(\int_{\delta_{1}}^{1-\lambda_{0}}\frac{d\eta}{\eta\left(-\log\eta\right)^{l}}\right)\underset{\eta\in\left[\delta_{1},1-\lambda_{0}\right]}{\sup}\left(\frac{\eta}{\left[1+\mu-\mu\eta\right]^{2}}\right),
\]
however, now under the supremum sign, instead of a monotonic function,
we have an expression that attains a maximum value $\dfrac{1}{4\left(-\mu\right)\left(1+\mu\right)}$
if $\delta_{1}<\dfrac{1+\mu}{\left(-\mu\right)}$ which lacks the
smallness we obtained in $R_{4}$. Therefore, to remedy the situation,
we require $\delta_{1}>\dfrac{1+\mu}{\left(-\mu\right)}$ and obtain
\[
R_{2}\leq\frac{1}{l-1}\left(\frac{1}{\left|\log\delta_{1}\right|^{l-1}}-\frac{1}{\left|\log\left(1-\lambda_{0}\right)\right|^{l-1}}\right)\frac{\delta_{1}}{\left(1+\mu-\mu\delta_{1}\right)^{2}}={\cal O}\left(\frac{1}{1+\mu}\left|\log\left(1+\mu\right)\right|^{-\gamma}\right)
\]
near $\mu=-1$, if we fix $\delta_{1}=\dfrac{1+\mu}{\left(-\mu\right)}\left(1+\left[-\log\left(1+\mu\right)\right]^{\gamma}\right)$
for arbitrary $\gamma>0$.\\
The last integral $R_{3}$ is to bridge the gap between the two neighborhoods
of $\lambda=1$:
\[
R_{3}=\int_{\delta_{2}}^{\delta_{1}}\frac{1}{\left(1+\mu-\mu\eta\right)^{2}}\frac{1}{\left(-\log\eta\right)^{l}}d\eta\leq\frac{1}{\left(-\log\delta_{1}\right)^{l}}\left(\frac{1}{1+\mu-\mu\delta_{2}}-\frac{1}{1+\mu-\mu\delta_{1}}\right)
\]
and hence, using the fact that $\log\left(-\log\left(1+\mu\right)\right)=o\left(-\log\left(1+\mu\right)\right)$,
we deduce that near $\mu=-1$
\[
R_{3}={\cal O}\left(\frac{1}{1+\mu}\left|\log\left(1+\mu\right)\right|^{-l}\right).
\]
Now that all the integral terms are estimated, choice of the parameter
$\gamma=l$ in $\delta_{1}$ leads to the first estimate in (\ref{eq:M0_e_asympt_gen})
whereas integration of (\ref{eq:e-M_connect}) recovers the second
one. \end{proof}
\begin{rem}
\label{rem:l2}
The case $l=2$ gives exactly the expressions in (\ref{eq:e_M0_asympt}).
The assumed behavior \eqref{eq:U_log} of $\left(U\xi_{0}\right)\left(\lambda\right)$ is analogous (with direct correspondence in the case $\psi\equiv0$, $h\equiv0$) to the conclusion of \cite[Prop. 4.1]{Baratchart-Grimm}
which was used to generate further estimates therein, and the case
$l=3$ is related to improved estimates given in \cite[Cor. 4.6]{Baratchart-Grimm} under
assumption of even higher regularity of boundary data (roughly speaking,
integrability of second derivatives).
It is noteworthy that the choice
$l=1$ yields non-integrable behavior of $M_{0}^{2}\left(\mu\right)$
contradicting Proposition \ref{prop:M0_integr}, and therefore was
eliminated in the formulation. This is not due to the fact that the
method of estimation of the $S_{2}$ integral fails, but because of
non-integrability near $\mu=-1$ of the overall bound. 
The $R_{4}$
term has been computed asymptotically sharply though it could be made even 
smaller by shrinking the neighborhood $\delta_{2}$. Indeed, instead
of the $\dfrac{1}{2}$ factor in $\delta_{2}$, we could have put
$\dfrac{1}{1+\left[-\log\left(1+\mu\right)\right]^{\beta}}$ for any
$\beta\geq0$ similarly to what was done in the $R_{2}$ term which
allowed a multiplier with arbitrary logarithmical smallness regulated
by the parameter $\gamma$. This, however, would not reduce
the overall blow-up because of the stiff bridging term $R_{3}$. Even though the estimate for $R_{3}$ is rough, we do not expect
improvement by an order of magnitude because the logarithmic factor of
the integrand picks up $\left(1+\mu\right)$ as a major multiplier
near $\eta=\delta_{1}$ which makes any choice of $\gamma\geq l$
and $\beta\geq0$ useless in attempt to improve the smallness factor
in the blow-up of $M_{0}^{2}\left(\mu\right)$.
\end{rem}
$\hspace{1em}$
\begin{rem}
\label{rem:l22}
Generally, we note that the appearance of the $\log\left(1+\mu\right)$
factors in the bounds is not accident, but intrinsically encoded in
the connection between $e\left(\mu\right)$ and $M_{0}^{2}\left(\mu\right)$
since (\ref{eq:e-M_connect}) can be rewritten as $e^{\prime}\left(\mu\right)=-\dfrac{dM_{0}^{2}}{d\left[\log\left(1+\mu\right)\right]}$
which also explains the choice of (\ref{eq:U_log}).
\end{rem}
We would like to point out again that even though our reasoning was
meant to provide an intuitive explanation of the estimates (\ref{eq:e_M0_asympt}),
more rigourous proofs can be found in \cite{Baratchart-Grimm} where
an elegant connection of the bounds with regularity of given boundary
data is established by elaborating concrete spectral theory results
\cite{Rosenblum-Rovnyak} into formulation of a certain integral transformation
followed by application of $L^{1}$-theory of Fourier transforms (Riemann-Lebesgue
lemma). Also, one can take an alternative viewpoint based on the results
of \cite{Rosenblum}. In that case, the unitary transformation $U$
diagonalizing the Toeplitz operator $\phi$ acts on Fourier coefficients $\left\{ \eta_{n}\right\} _{n=0}^{\infty}\in l^{2}\left(\mathbb{N}_{0}\right)$
of a given $\xi_{0}\in H^{2}$ as 
\begin{equation}
\left(U\xi_{0}\right)\left(\lambda\right)=\sum_{n=0}^{\infty}\eta_{n}\psi_{n}\left(\lambda\right),\label{eq:U_orth_expans}
\end{equation}
where the orthonormal sequence of $L^{2}\left(0,1\right)$ functions
$\psi_{n}\left(\lambda\right)$ are explicitly defined in terms of
the Meixner-Pollaczek polynomials of order $1/2$ \cite{Olver}:
\[
\psi_{n}\left(\lambda\right):=e^{a\beta}\left(1+e^{-2\pi\beta}\right)^{1/2}P_{n}^{\left(1/2\right)}\left(\beta,a\right),\hspace{1em}\hspace{1em}\beta:=-\frac{1}{2\pi}\log\left(\dfrac{1}{\lambda}-1\right)
\]
providing $I=\left(e^{-ia},e^{ia}\right)$, $a\in\left(0,\pi\right)$,
an assumption that does not reduce the generality if the original
sets $I$ and $J$ are two disjoint arcs.

A recurrence formula for the Meixner-Pollaczek polynomials follows
from that for the Pollaczek polynomials \cite{Szego}:
\begin{equation}
P_{n}^{\left(1/2\right)}\left(\beta\right)=\frac{1}{n}\left(2\beta\sin a-\left(2n-1\right)\cos a\right)P_{n-1}^{\left(1/2\right)}\left(\beta\right)-\frac{n-1}{n}P_{n-2}^{\left(1/2\right)}\left(\beta\right),\label{eq:Pollaczek_pol}
\end{equation}
\[
P_{-1}^{\left(1/2\right)}\left(\beta\right)=0,\hspace{1em}P_{0}^{\left(1/2\right)}\left(\beta\right)=1,
\]
which allows to generate all the coefficients $k_{m}^{\left(n\right)}$
in $P_{n}^{\left(1/2\right)}\left(\beta\right)=\sum_{m=0}^{n}k_{m}^{\left(n\right)}\beta^{m}$,
for instance,
\[
k_{n}^{\left(n\right)}=\frac{\left(2\sin a\right)^{n}}{n!},\hspace{1em}k_{n-1}^{\left(n\right)}=-n\cos a\frac{\left(2\sin a\right)^{n-1}}{\left(n-1\right)!},
\]
\[k_{n-2}^{\left(n\right)}=\frac{1}{6}\left[3n\left(n-1\right)\cos^{2}a-\left(2n-1\right)\sin^{2}a\right]\frac{\left(2\sin a\right)^{n-2}}{\left(n-2\right)!}.
\]
Rearranging the terms in (\ref{eq:U_orth_expans}), we can write (suppressing
the first two factors for the sake of compactness)
\begin{eqnarray}
\left(U\xi_{0}\right)\left(\lambda\right)&\propto&\sum_{n=0}^{\infty}\eta_{n}\sum_{m=0}^{n}k_{m}^{\left(n\right)}\beta^{m}=\sum_{m=0}^{\infty}\left(\sum_{n=m}^{\infty}\eta_{n}k_{m}^{\left(n\right)}\right)\beta^{m}\nonumber \\
&=&\sum_{m=0}^{\infty}\left(\eta_{m}k_{m}^{\left(m\right)}+\eta_{m+1}k_{m}^{\left(m+1\right)}+\dots\right)\beta^{m}.\label{eq:U_orth_expans2}
\end{eqnarray}

It would be interesting to see, in such a representation, what decay
assumptions on the Fourier coefficients $\eta_{n}$ are consistent
with (\ref{eq:U_log}), and thus (\ref{eq:M0_e_asympt_gen}),
with $1<l<2$ in which case there is no violation of integrability
of $M_{0}^{2}\left(\mu\right)$ and less regularity assumptions (namely,
milder than decay of $n\eta_{n}$ to zero as $n\rightarrow\infty$)
are expected than those related with integrability of the first derivative
of boundary data.

Note that, because of the Taylor series of the exponential function,
we have
\begin{eqnarray*}
\left|\sum_{m=0}^{\infty}\left(\eta_{m}k_{m}^{\left(m\right)}\right)\beta^{m}\right|&\leq&\left(\sup_{m\in\mathbb{N}_{0}}\left|\eta_{m}\right|\right)\sum_{m=0}^{\infty}\frac{1}{m!}\left(\frac{\sin a}{\pi}\right)^{m}\left|\log\left(\dfrac{1}{\lambda}-1\right)\right|^{m}\\
&=&\left(\sup_{m\in\mathbb{N}_{0}}\left|\eta_{m}\right|\right)\begin{cases}
\left(\dfrac{1}{\lambda}-1\right)^{\dfrac{\sin a}{\pi}}, & 0<\lambda<\dfrac{1}{2},\\
\left(\dfrac{1}{\lambda}-1\right)^{-\dfrac{\sin a}{\pi}}, & \dfrac{1}{2}\le\lambda<1,\end{cases}
\end{eqnarray*}
and thus the very first term already adds to the singular behavior
of (\ref{eq:U_orth_expans}) near $\lambda=1$ (unless additional
assumptions on alternation of sign of $\eta_{m}$ are made) instead
of revealing any decay to zero. This suggests that terms in the brackets
of (\ref{eq:U_orth_expans2}) should not be estimated separately:
the other terms contribute equally to $\left(U\xi_{0}\right)\left(\lambda\right)$
though their expressions are much more cumbersome for straightforward
analysis.

An alternative way might be to work in direction of obtaining estimates
of (\ref{eq:M_series})-(\ref{eq:e_series}) near $\mu=-1$ in terms
of $\eta_{m}$ from 
\[
\int_{0}^{1}\dfrac{1}{1+\mu\lambda}\left|\left(U\xi_{0}\right)\left(\lambda\right)\right|^{2}d\lambda\hspace{1em} \mbox{ and }\hspace{1em}
\int_{0}^{1}\dfrac{1}{\left(1+\mu\lambda\right)^{2}}\left|\left(U\xi_{0}\right)\left(\lambda\right)\right|^{2}d\lambda 
\]
directly without deducing behavior of $\left(U\xi_{0}\right)\left(\lambda\right)$
in vicinity of $\lambda=1$, but using explicit form of the unitary
transformation (\ref{eq:U_orth_expans}). To take advantage of it,
one can potentially expand integrand factors $\dfrac{1}{1+\mu\lambda}$
in terms of $\beta$ and iteratively employ the recurrence formula
(\ref{eq:Pollaczek_pol}) rewritten as 
\[
\beta P_{n}^{\left(1/2\right)}\left(\beta,a\right)=\frac{n+1}{2\sin a}P_{n+1}^{\left(1/2\right)}\left(\beta,a\right)+\frac{\left(2n+1\right)\cot a}{2}P_{n}^{\left(1/2\right)}\left(\beta,a\right)+\frac{n}{2\sin a}P_{n-1}^{\left(1/2\right)}\left(\beta,a\right)
\]
followed by application of orthonormality. Note that such a strategy
(but based on expansion of $\lambda$ in terms of $\beta$) along
with the fact that $U^{-1}\psi_{n}\left(\lambda\right)=z^{n}$ might
also be used to see how the Toeplitz operator $\phi$ acts on Fourier
coefficients of a function.

\newpage
\section{\label{sec:compan_pb}Companion problem}

At this moment, it is time to point out a link with another bounded
extremal problem which relies on the observation that formal substitution
of $\mu=0$ in (\ref{eq:g0_sol_fin_alt_compl}) implies that 
\begin{equation}
\tilde{g}_{0}=\psi+bP_{+}\left(\bar{b}\left(f\vee h\right)\right)\label{eq:g0_sol_mu0}
\end{equation}
 is an explicit solution for the problem with the particular constraint 
\[
M=M_0\left(0\right)=\left\Vert \psi+bP_{+}\left(\bar{b}\left(f\vee h\right)\right)-h\right\Vert _{L^{2}\left(J\right)}.
\]

Recalling that $bP_{+}\bar{b}$ is a projector onto $bH^{2}$
(see Section \ref{sec:intro_Hardy}), we note that, geometrically,
the solution (\ref{eq:g0_sol_mu0}) is simply a realization of projection of $f\vee h\in L^{2}\left(\mathbb{T}\right)$
onto $\mathcal{C}_{M,h}^{\psi,b}$. Now, exploiting the arbitrariness
of choice of interpolant $\psi$ (Remark \ref{rem:sol_fin_alt}),
we can change our viewpoint and look for $\psi\in H^{2}$
meeting pointwise constraints (\ref{eq:meas_cond}) such that $\psi+bP_{+}\left(\bar{b}\left(f\vee h\right)\right)-h$
is sufficiently close to the constant\footnote{Alternatively, one can take any $L^2\left(J\right)$ function that has norm $M$.} $M/\sqrt{\left|J\right|}$ in $L^{2}\left(J\right)$ yet remaining
$L^{2}$-bounded on $I$. In other words, given arbitrary $\psi_{0}\in H^{2}$
satisfying the pointwise interpolation conditions (\ref{eq:meas_cond}) (take,
for instance, (\ref{eq:interp_expans})), we represent $\psi=\psi_{0}+b\Psi$
and thus search for an approximant $\Psi\in H^{2}$
to $\bar{b}\left(h-\psi_{0}+M\right)-P_{+}\left(\bar{b}\left(f\vee h\right)\right)\in L^{2}\left(J\right)$
such that $\left\Vert \Psi\right\Vert _{L^{2}\left(I\right)}=K$ for
arbitrary $K\in\left(0,\infty\right)$. We thus reduce the original problem to an associated approximation
problem on $J$ for which all known data are now prescribed on $J$
alone. Since the constraint on $I$ is especially simple (role of $\psi$ and $h$ play identically zero functions), 
such a companion problem has
a computational advantage over the original one as, due to the form
of solution (\ref{eq:g0_sol_fin}), it requires integration only over
a subset of $\mathbb{T}$ (see (\ref{eq:num_S_term})).

To be more precise, let $\Psi_{0}$ be a solution to the companion
problem such that 
\[
\left\Vert \psi_{0}+b\Psi_{0}+bP_{+}\left(\bar{b}\left(f\vee h\right)\right)-h\right\Vert _{L^{2}\left(J\right)}^{2}=M^{2}+\delta_{M^{2}},
\]
where $\delta_{M^{2}}$ measures accuracy of the solution of the companion
problem. Then, solution to the original problem should be sought as
a series expansion near (\ref{eq:g0_sol_mu0}) with respect to $\delta_{M^{2}}$
as a small parameter
\begin{equation}
\tilde{g}_{0}=\psi_{0}-bP_{+}\left(\bar{b}\psi_{0}\right)+bP_{+}\left(\bar{b}\left(f\vee h\right)\right)+b\left.\frac{dg_{0}}{d\mu}\right|_{\mu=0}\left.\frac{d\mu}{dM_{0}^{2}}\right|_{M_{0}^{2}=M^{2}}\delta_{M^{2}}+\dots,\label{eq:orig_compan_pb_conn}
\end{equation}
and further the relations (\ref{eq:dg0_dmu})-(\ref{eq:M_monot})
followed by $\left.\dfrac{d\mu}{dM_{0}^{2}}\right|_{M_{0}^{2}=M^{2}}=\left.\left(\dfrac{dM_{0}^{2}}{d\mu}\right)^{-1}\right|_{\mu=0}$
should be employed (here $g_{0}$ is as in (\ref{eq:g0_sol_fin})).
Recalling Section \ref{sec:intro_Hardy}, we note that the first two
terms realize a projection of $\psi_{0}$ onto $\left(bH^{2}\right)^{\perp_{H^{2}}}$
which will be simply $\psi_{0}$ if (\ref{eq:interp_expans}) was
used as the arbitrary interpolant (see Remark \ref{rem:sol_fin_alt}). 

If the companion problem was solved with good accuracy so that $\delta_{M^{2}}$
is small, linear order approximation in $\delta_{M^{2}}$ may be sufficient
to recover the solution of the original problem. However, this connection
between solution of two problems is valid for arbitrary values of
$\delta_{M^{2}}$ if one considers infinite series in $\delta_{M^{2}}$.
This can be formalized with use of the Fa\`a di Bruno formula which
provides explicit form of the Taylor expansion for the function composition
$g_{0}\left(\mu\left(M_{0}^{2}\right)\right)$ in terms of the derivatives
$\left.\dfrac{d^{k}g_{0}}{\left(d\mu\right)^{k}}\right|_{\mu=0}$
and $\left.\dfrac{d^{k}\mu}{\left(dM_{0}^{2}\right)^{k}}\right|_{M_{0}^{2}=M^{2}}$
for any $k\in\mathbb{N}_{+}$. Applying the product rule and expression
(\ref{eq:dg0_dmu}) successively it can be shown that, after collection
of terms at each differentiation, we have
\[
\dfrac{d^{k}g_{0}}{\left(d\mu\right)^{k}}=\left(-1\right)^{k}k!\left(1+\mu\phi\right)^{-k}\phi^{k}\tilde{\xi}\hspace{1em}\Rightarrow\hspace{1em}\left.\dfrac{d^{k}g_{0}}{\left(d\mu\right)^{k}}\right|_{\mu=0}=\left(-1\right)^{k}k!\phi^{k}\tilde{\xi}_{0},
\]
where 
\[
\tilde{\xi}:=P_+ \left(0 \vee \left(g_{0}+\bar{b}\left(\psi_{0}-h\right)+\Psi_{0}\right)\right),\hspace{1em}\tilde{\xi}_{0}:= \phi \left(P_{+}\left(\bar{b}\left(f\vee h\right)\right)-P_{+}\left(\bar{b}\psi_{0}\right)-\Psi_{0}\right).
\]

As far as computation of derivates of $\dfrac{d\mu}{dM_{0}^{2}}$
is concerned, complexity of the expressions grows and precise pattern
seem to be hard to find especially since implicit differentiation
has to be repeated every time resulting in successive appearance of
extra factor $\dfrac{d\mu}{dM_{0}^{2}}$. Even though in practice
one may look at the truncated Taylor expansion $M_{0}^{2}\left(\mu\right)$
and, since derivatives $\dfrac{dM_{0}^{2}}{d\mu}$ are readily computable,
use reversion of the series to obtain power series expansion of $\mu$
in terms of $M_{0}^{2}$ (for reversion of series coefficient formula,
see \cite{Morse-Feshbach}) or, alternatively, employ the Lagrange
inversion theorem that yields the inverse function $\mu\left(M_{0}^{2}\right)$
as 
an infinite series, in the latter case we would have
to decide at which term the both series should be truncated so that
to preserve desired accuracy at given order of $\delta_{M^{2}}$.
For small $\delta_{M^{2}}$, only few terms are needed to give quite
accurate connection between solution of the original and companion
problems. Those can be precomputed manually or using computer algebra
systems once and such calculations need not be repeated iteratively.

\newpage
\section{\label{sec:stab_res}Stability results}

The issue to be discussed here is linear stability of the solution (\ref{eq:g0_sol_def})
with respect to all physical components that the expression (\ref{eq:g0_sol_fin})
involves explicitly and implicitly. In practice, functions $f$, $h$
are typically obtained by interpolating discrete boundary data and
hence may vary depending on interpolation method, measurement positions
$\left\{ z_{j}\right\} _{j=1}^{N}$ are usually known with a small
error and pointwise data $\left\{ \omega_{j}\right\} _{j=1}^{N}$
are necessarily subject to a certain noise. Therefore, we assume that
boundary data $f$, $h$ are slightly perturbed by $\delta_{f}\in L^{2}\left(I\right)$,
$\delta_{h}\in L^{2}\left(J\right)$ and internal data $\left\{ \omega_{j}\right\} _{j=1}^{N}$
with measurement positions $\left\{ z_{j}\right\} _{j=1}^{N}$ by
complex vectors $\boldsymbol{\delta_{\omega}}$, $\boldsymbol{\delta_{z}}\in\mathbb{C}^{N}$,
respectively. Varying one of the quantities while the rest are kept fixed,
we are going to estimate separately the linear effects of such perturbations
on the solution $\tilde{g}_{0}=\psi+bg_{0}$ to \eqref{eq:g0_sol_def}, denoting the induced deviations
as $\delta_{\tilde{g}}$.

\begin{prop}
For $\mu>-1$, $f\in L^{2}\left(I\right)\backslash\left.\mathcal{A}^{\psi,b}\right|_{I}$, $h\in L^{2}\left(J\right)$, 
and small enough data perturbations $\delta_{f}\in L^{2}\left(I\right)$,
$\delta_{h}\in L^{2}\left(J\right)$, $\boldsymbol{\delta_{\omega}}$, $\boldsymbol{\delta_{z}}\in\mathbb{C}^{N}$, 
the following estimates hold:\\
(1) $\quad\left\Vert \delta_{\tilde{g}}\right\Vert _{H^{2}}\leq m_{1}\left(1+\dfrac{m_{1}M^{2}}{m_{0}\left\Vert \xi\right\Vert _{H^{2}}^{2}}\right)\left\Vert \delta_{f}\right\Vert _{L^{2}\left(I\right)}$,\\
(2) $\quad\left\Vert \delta_{\tilde{g}}\right\Vert _{H^{2}}\leq\left[\left(1+m_{1}\left(1+\mu\right)\right)\left(1+\dfrac{m_{1}M^{2}}{m_{0}\left\Vert \xi\right\Vert _{H^{2}}^{2}}\right)-1\right]\left\Vert \delta_{h}\right\Vert _{L^{2}\left(J\right)}$,\\
(3) $\quad\left\Vert \delta_{\tilde{g}}\right\Vert _{H^{2}}\leq\left(1+\left|\mu\right|m_{1}\right)\left(1+\dfrac{m_{1}M^{2}}{m_{0}\left\Vert \xi\right\Vert _{H^{2}}^{2}}\right)\underset{j=1,\dots,N}{\max}\Bigl\Vert\underset{\substack{k=1\\
k\neq j
}
}{\overset{N}{\prod}}\dfrac{z-z_{k}}{z_{j}-z_{k}}\Bigr\Vert_{H^{2}}\left\Vert \boldsymbol{\delta_{\omega}}\right\Vert _{l^{1}}$,\\
(4) $\quad\left\Vert \delta_{\tilde{g}}\right\Vert _{H^{2}}\leq$$\left(1+\dfrac{m_{1}M^{2}}{m_{0}\left\Vert \xi\right\Vert _{H^{2}}^{2}}\right)\left(C_{\mu}^{\left(1\right)}\left\Vert \delta_{b}\right\Vert _{H^{\infty}}+C_{\mu}^{\left(2\right)}\left\Vert \delta_{\psi}\right\Vert _{H^{2}}\right)$,\\
where
\begin{equation}
\xi:=P_{+}\left(0\vee\left(g_{0}+\bar{b}\left(\psi-h\right)\right)\right),\hspace{0.5em}m_{0}:=\min\left\{ \left(1+\mu\right)^{-1},1\right\} ,\hspace{0.5em}m_{1}:=\max\left\{ \left(1+\mu\right)^{-1},1\right\} ,\hspace{1em}\hspace{1em}\hspace{1em}\hspace{1em}\hspace{1em}\;\label{eq:xi0_m0_m1}
\end{equation}\\
$C_{\mu}^{\left(1\right)}:=m_{1}\left(\left\Vert f\vee h\right\Vert _{L^{2}\left(\mathbb{T}\right)}+\left|\mu\right|\left\Vert h-\psi\right\Vert _{L^{2}\left(J\right)}\right), \hspace{1em} C_{\mu}^{\left(2\right)}:=1+\left|\mu\right|m_{1},$ \hspace{1em} and\\
$\left\Vert \delta_{b}\right\Vert _{H^{\infty}}\leq2\underset{j=1,\dots,N}{\max}\left\Vert \left(z-z_{j}\right)^{-1}\right\Vert _{H^{\infty}}\left\Vert \boldsymbol{\delta_{z}}\right\Vert _{l^{1}},$\\
$\left\Vert \delta_{\psi}\right\Vert _{H^{2}}\leq2\underset{j=1,\dots,N}{\max}\left|\omega_{j}\right| \underset{j=1,\dots,N}{\max}\Bigl\Vert\underset{\substack{m=1\\
m\neq j
}
}{\overset{N}{\prod}}\left(z-z_{m}\right)\Bigr\Vert_{H^{2}}\times\\
~~~~~~~~~~~~~~~\underset{j=1,\dots,N}{\max}\underset{\substack{k=1\\
k\neq j
}
}{\overset{N}{\sum}}\left|z_{j}-z_{k}\right|^{-1}\left(\underset{j=1,\dots,N}{\min}\underset{\substack{k=1\\
k\neq j
}
}{\overset{N}{\prod}}\left|z_{j}-z_{k}\right|\right)^{-1}\left\Vert \boldsymbol{\delta}_{\boldsymbol{z}}\right\Vert _{l^{1}},$

\end{prop}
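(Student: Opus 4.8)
The plan is to view the solution $\tilde g_0=\psi+bg_0$ as a differentiable function of all the data through the closed form
\[
\tilde g_0=\psi+b\left(1+\mu\phi\right)^{-1}\Bigl[P_+\bigl(\bar b(f\vee h)\bigr)+\mu P_+\bigl(0\vee\bar b(h-\psi)\bigr)-P_+(\bar b\psi)\Bigr],
\]
which comes from \eqref{eq:g0_sol_fin} by expanding $\bar b(f-\psi)\vee(1+\mu)\bar b(h-\psi)$ and reduces to \eqref{eq:g0_sol_fin_alt_compl} for the minimal interpolant \eqref{eq:interp_expans}; by Lemma \ref{lem:psi_indep} it does not depend on which interpolant is used. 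Here $\mu>-1$ is the implicit function of the data fixed by $M_0^2(\mu)=M^2$, and since $dM_0^2/d\mu<0$ strictly on $(-1,\infty)$ (Proposition \ref{prop:monot}) the implicit function theorem makes $\mu$ of class $C^1$ as long as the perturbation keeps $\mu>-1$ and $f\notin\mathcal A^{\psi,b}|_I$. Thus the first-order variation splits as
\[
\delta_{\tilde g}=\delta_{\tilde g}^{\,\mathrm{dir}}+b\,\frac{dg_0}{d\mu}\,\delta_\mu,\qquad \delta_\mu=-\Bigl(\frac{dM_0^2}{d\mu}\Bigr)^{-1}\delta_{M_0^2}^{\,\mathrm{dir}},
\]
with the superscript $\mathrm{dir}$ denoting variation at frozen $\mu$.

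The tools are uniform across the four cases: $\|(1+\mu\phi)^{-1}\|=m_1$ and $\langle(1+\mu\phi)^{-1}v,v\rangle\ge m_0\|v\|_{H^2}^2$ from $\sigma(\phi)=[0,1]$ (see \eqref{eq:spectrum}); $\|bu\|_{H^2}=\|u\|_{H^2}$ and $\|P_+\|\le1$, whence $\|P_+(\bar b w\vee0)\|_{H^2}\le\|w\|_{L^2(I)}$ and likewise on $J$; the identity $b\,dg_0/d\mu=-b(1+\mu\phi)^{-1}\xi$ from \eqref{eq:dg0_dmu}, where $\xi$ is exactly the function in the statement; the saturation $\|\tilde g_0-h\|_{L^2(J)}=M$ (Lemma \ref{lem:saturation}); and the observation that on $J$ one has $g_0+\bar b(\psi-h)=\bar b(\tilde g_0-h)$, so that $\|\xi\|_{H^2}\le M$ (with $\|\xi\|_{H^2}>0$, otherwise $M=0$, excluded by Corollary \ref{cor:M_pos}), hence $M/\|\xi\|_{H^2}\le M^2/\|\xi\|_{H^2}^2$. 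Combining $\bigl|dM_0^2/d\mu\bigr|=2\langle(1+\mu\phi)^{-1}\xi,\xi\rangle\ge2m_0\|\xi\|_{H^2}^2$ with $\bigl|\delta_{M_0^2}^{\,\mathrm{dir}}\bigr|\le2M\bigl(\|\delta_{\tilde g}^{\,\mathrm{dir}}\|_{L^2(J)}+\|\delta_h\|_{L^2(J)}\bigr)$ — the last term present only when $h$ is the perturbed datum, since then $h$ also enters $M_0^2$ explicitly — gives $\bigl\|b\,(dg_0/d\mu)\,\delta_\mu\bigr\|_{H^2}\le m_1\|\xi\|_{H^2}\,|\delta_\mu|\le\dfrac{m_1M^2}{m_0\|\xi\|_{H^2}^2}\bigl(\|\delta_{\tilde g}^{\,\mathrm{dir}}\|_{L^2(J)}+\cdots\bigr)$; i.e.\ the $\mu$-correction just multiplies the direct estimate by $1+\dfrac{m_1M^2}{m_0\|\xi\|_{H^2}^2}$.

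It remains to read $\delta_{\tilde g}^{\,\mathrm{dir}}$ off the displayed formula. For $\delta_f$ only $g_0$ moves, $\delta_{\tilde g}^{\,\mathrm{dir}}=b(1+\mu\phi)^{-1}P_+(\bar b\delta_f\vee0)$ has $H^2$-norm $\le m_1\|\delta_f\|_{L^2(I)}$, and adding the $\mu$-correction yields (1). For $\delta_h$ one has $\delta_{\tilde g}^{\,\mathrm{dir}}=(1+\mu)b(1+\mu\phi)^{-1}P_+(0\vee\bar b\delta_h)$, of norm $\le(1+\mu)m_1\|\delta_h\|_{L^2(J)}$, while $\|\delta_{\tilde g}^{\,\mathrm{dir}}\|_{L^2(J)}+\|\delta_h\|_{L^2(J)}\le\bigl((1+\mu)m_1+1\bigr)\|\delta_h\|_{L^2(J)}$; combining and factoring gives exactly $\bigl[(1+m_1(1+\mu))(1+\tfrac{m_1M^2}{m_0\|\xi\|_{H^2}^2})-1\bigr]\|\delta_h\|_{L^2(J)}$, which is (2). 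For (4), $b$ and $\psi$ move but $\phi=P_+\chi_J$ does not; the displayed formula shows $\delta_{\tilde g}^{\,\mathrm{dir}}$ is $\delta_\psi$ plus $(1+\mu\phi)^{-1}$ applied to terms linear in $\delta_b$ (through the outer $b$ and the three $\bar b$-occurrences) and in $\delta_\psi$, and the norm bounds above, together with the saturation factor $\|h-\psi\|_{L^2(J)}$, produce $C_\mu^{(1)}\|\delta_b\|_{H^\infty}+C_\mu^{(2)}\|\delta_\psi\|_{H^2}$; the $\mu$-correction again multiplies by $1+\tfrac{m_1M^2}{m_0\|\xi\|_{H^2}^2}$. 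Finally (3) is the special case of (4) with the Lagrange interpolant $\psi=\sum_j\omega_jL_j$, $L_j(z)=\prod_{k\ne j}\tfrac{z-z_k}{z_j-z_k}$: then $\delta_b=0$, $\delta_\psi=\sum_j\delta_{\omega_j}L_j$ and $\|\delta_\psi\|_{H^2}\le\max_j\|L_j\|_{H^2}\,\|\boldsymbol{\delta_\omega}\|_{l^1}$; and the stated bounds on $\|\delta_b\|_{H^\infty}$ and $\|\delta_\psi\|_{H^2}$ in terms of $\|\boldsymbol{\delta_z}\|_{l^1}$ follow by differentiating the Blaschke product \eqref{eq:Blaschke_prod} and the Lagrange interpolant with respect to the nodes and estimating the finite products factor by factor.

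The main obstacle is estimate (4): extracting a clean $\|\delta_b\|_{H^\infty}$-bound from the several places where $b$ and $\bar b$ enter $\tilde g_0$, and the rather delicate bookkeeping of the finite products that yields $\|\delta_b\|_{H^\infty}$ and the moving-node variation $\|\delta_\psi\|_{H^2}$ in terms of $\|\boldsymbol{\delta_z}\|_{l^1}$. Everything else is a uniform use of $\|(1+\mu\phi)^{-1}\|=m_1$, the lower bound $\langle(1+\mu\phi)^{-1}\xi,\xi\rangle\ge m_0\|\xi\|_{H^2}^2$, and Proposition \ref{prop:monot} to license the implicit differentiation of $\mu$.
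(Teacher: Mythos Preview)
Your proposal is correct and follows essentially the same route as the paper: split $\delta_{\tilde g}$ into a direct (frozen-$\mu$) part and a $\mu$-readjustment governed by \eqref{eq:dg0_dmu}--\eqref{eq:M_monot}, use the spectral bounds on $(1+\mu\phi)^{-1}$ to produce $m_0,m_1$, bound $|\delta_\mu|$ via \eqref{eq:dmu_dM0}, and read the direct variations off the closed form of $\tilde g_0$, with the Lagrange interpolant for (3) and node-differentiation of the Blaschke product and Lagrange basis for (4). You even make explicit the inequality $\|\xi\|_{H^2}\le M$ (from $\xi=P_+(0\vee\bar b(\tilde g_0-h))$ and saturation) that the paper uses tacitly to pass from $M/\|\xi\|_{H^2}$ to $M^2/\|\xi\|_{H^2}^2$ in the final constants.
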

\begin{proof}
When the quantities entering the solution (\ref{eq:g0_sol_fin}) vary,
the overall variation of the solution $\delta_{g}$ will consist of
parts entering the solution formula explicitly $\delta_{g_{0}}$ as
well as those coming from the change of the norm of $g_{0}$ on $J$
which leads to readjustment of the Lagrange parameter $\delta_{\mu}$
so that the quantity $M_{0}^{2}\left(\mu\right)=\left\Vert \psi+bg_{0}\left(\mu\right)-h\right\Vert _{L^{2}\left(J\right)}^{2}$
be equal to the same given constraint $M^{2}$. For the sake of brevity,
we are going to use the notations $\xi$, $m_{0}$ and $m_{1}$ introduced
in (\ref{eq:xi0_m0_m1}) to denote certain quantities entering common
estimates. The spectral bounds (\ref{eq:spectrum}) for $\mu>-1$
imply
\[
\sigma\left(1+\mu\phi\right)\geq\min\left\{ 1+\mu,1\right\} ,\hspace{1em}\sigma\left(1+\mu\phi\right)\leq\max\left\{ 1+\mu,1\right\} 
\]

\[
\hspace{1em}\Rightarrow\hspace{1em}\left\Vert \left(1+\mu\phi\right)^{-1}\right\Vert \leq\max\left\{ \left(1+\mu\right)^{-1},1\right\} ,\hspace{1em}\left\Vert \left(1+\mu\phi\right)^{-1}\right\Vert \geq\min\left\{ \left(1+\mu\right)^{-1},1\right\} ,
\]
and so, in particular, 
\[
\text{Re}\left\langle \left(1+\mu\phi\right)^{-1}\xi,\xi\right\rangle _{L^{2}\left(\mathbb{T}\right)}\geq m_{0}\left\Vert \xi\right\Vert _{H^{2}}^{2}.
\]
Then, the connection between $\delta_{M^{2}}$ denoting the change
of $M_{0}^{2}\left(\mu\right)$ and $\delta_{\mu}$ can be established
based on the strict monotonicity (\ref{eq:M_monot}) of $M_{0}\left(\mu\right)$
which allows the following estimate by inversion

\begin{equation}
\delta_{\mu}=\frac{\delta_{M^{2}}}{\left(M_{0}^{2}\left(\mu\right)\right)^{\prime}}=-\frac{\delta_{M^{2}}}{2\text{Re}\left\langle \left(1+\mu\phi\right)^{-1}\xi,\xi\right\rangle _{L^{2}\left(\mathbb{T}\right)}}\hspace{1em}\Rightarrow\hspace{1em}\left|\delta_{\mu}\right|\leq\frac{\left|\delta_{M^{2}}\right|}{2m_{0}\left\Vert \xi\right\Vert _{H^{2}}^{2}}.\label{eq:dmu_dM0}
\end{equation}

Note that the bound in the right-hand side is finite due to the fact
that $\left\Vert \xi\right\Vert _{H^{2}}>0$ which holds unless $M_{0}\left(\mu\right)=0$,
the situation that was initially ruled out by Corollary \ref{cor:M_pos}.
Discussion on \textit{a priori} estimate of $\left\Vert \xi\right\Vert _{H^{2}}$
will be given in Remark \ref{rem:xi0_estim}.

Following this strategy, we embark on consecutive proof of the results
\textit{(1)}-\textit{(4).}

\textit{Result (1):} 

This is the simplest case, the variation of $M_{0}^{2}\left(\mu\right)$
is induced only by change of $g_{0}$. Namely, 
\begin{equation}
\delta_{M^{2}}=2\text{Re}\left\langle \psi+bg_{0}\left(\mu\right)-h,b\delta_{g_{0}}\left(\mu\right)\right\rangle _{L^{2}\left(J\right)},\label{eq:dM0_dg0}
\end{equation}
where 
\begin{equation}
\delta_{g_{0}}=\left(1+\mu\phi\right)^{-1}P_{+}\left(\bar{b}\delta_{f}\vee0\right).\label{eq:dg0_df}
\end{equation}

Application of the Cauchy-Schwarz inequality to (\ref{eq:dM0_dg0})
yields 
\begin{eqnarray*}
\left|\delta_{M^{2}}\right| & \leq & 2M_{0}\left(\mu\right)\left\Vert \left(1+\mu\phi\right)^{-1}\right\Vert \left\Vert P_{+}\left(\bar{b}\delta_{f}\vee0\right)\right\Vert _{L^{2}\left(\mathbb{T}\right)}\leq2M_{0}\left(\mu\right)m_{1}\left\Vert \delta_{f}\right\Vert _{L^{2}\left(I\right)}.
\end{eqnarray*}
and hence, by (\ref{eq:dmu_dM0}),
\[
\left|\delta_{\mu}\right|\leq\frac{m_{1}M_{0}\left(\mu\right)}{m_{0}\left\Vert \xi\right\Vert _{H^{2}}^{2}}\left\Vert \delta_{f}\right\Vert _{L^{2}\left(I\right)}.
\]
Now since $\delta_{\tilde{g}}=b\delta_{g}$, due to (\ref{eq:dg0_dmu}),
we have
\begin{equation}
\delta_{\tilde{g}}=b\delta_{g_{0}}-b\left(1+\mu\phi\right)^{-1}P_{+}\left(0\vee\left(g_{0}+\bar{b}\left(\psi-h\right)\right)\right)\delta_{\mu},\label{eq:dg_tilde}
\end{equation}
from where we deduce the inequality \textit{(1)}. 

\textit{Result (2):}

This is totally analogous to the previous result except for now we
have 

\begin{equation}
\delta_{M^{2}}=2\text{Re}\left\langle \psi+bg_{0}\left(\mu\right)-h,b\delta_{g_{0}}\left(\mu\right)-\delta_{h}\right\rangle _{L^{2}\left(J\right)}\label{eq:dM2_dh}
\end{equation}
with 
\begin{equation}
\delta_{g_{0}}=\left(1+\mu\phi\right)^{-1}P_{+}\left(0\vee\left(1+\mu\right)\bar{b}\delta_{h}\right).\label{eq:dg0_dh}
\end{equation}
Therefore, 
\begin{equation*}
\left|\delta_{M^{2}}\right|\leq2M_{0}\left(\mu\right)\left[1+\left(1+\mu\right)m_{1}\right]\left\Vert \delta_{h}\right\Vert _{L^{2}\left(J\right)}\hspace{1em}\Rightarrow\hspace{1em}\left|\delta_{\mu}\right|\leq\frac{M_{0}\left(\mu\right)\left[1+\left(1+\mu\right)m_{1}\right]}{m_{0}\left\Vert \xi\right\Vert _{H^{2}}^{2}}\left\Vert \delta_{h}\right\Vert _{L^{2}\left(J\right)}.
\end{equation*}
Feeding this in the relation (\ref{eq:dg_tilde}), which still holds
in this case, gives

\[
\left\Vert \delta_{\tilde{g}}\right\Vert _{H^{2}}\leq m_{1}\left(1+\mu+\dfrac{\left[1+\left(1+\mu\right)m_{1}\right]M^{2}}{m_{0}\left\Vert \xi\right\Vert _{H^{2}}^{2}}\right)\left\Vert \delta_{h}\right\Vert _{L^{2}\left(J\right)},
\]
that is exactly a rewording of estimate \textit{(2)}.

\textit{Result (3):}

The estimates \textit{(3)} and\textit{ (4)} explore sensitivity of
solution to measurement noise which any experimental data are prone
to. In both cases proofs are similar to those of \textit{(1)-(2)}
with only few new ingredients.

In case of \textit{(3),} a perturbed data vector $\boldsymbol{\delta_{\omega}}\in\mathbb{C}^{N}$
affects the solution $\tilde{g}_{0}$ by means of the induced variation
of $\psi$ that we will denote by $\delta_{\psi}\in H^{2}\left(\mathbb{D}\right)$.\\
If $\psi$ is given by (\ref{eq:interp_expans}), its perturbation
can be estimated as
\begin{equation}
\left\Vert \delta_{\psi}\right\Vert _{H^{2}}\leq\max_{k=1,\dots,N}\left\Vert \mathcal{K}\left(z_{k},\cdot\right)\right\Vert _{H^{2}}\left\Vert S\right\Vert _{1}\left\Vert \boldsymbol{\delta_{\omega}}\right\Vert _{l^{1}},\label{eq:delta_psi1}
\end{equation}
where $\left\Vert \boldsymbol{\delta_{\omega}}\right\Vert _{l^{1}}:=\sum_{k=1}^{N}\left|\left(\delta_{\omega}\right)_{k}\right|$,
$\left\Vert S\right\Vert _{1}:=\underset{j=1,\dots,N}{\max}\sum_{k=1}^{N}\left|S_{kj}\right|$
with $S$ as defined in (\ref{eq:S_def}). However, to get more explicit
result with respect to data positions $\left\{ z_{j}\right\} _{j=1}^{N}$
(which will be more relevant in case \textit{(4)}) avoiding reference
to (\ref{eq:S_def}), we employ polynomial interpolation in Lagrange
form 
\begin{equation}
\psi=\sum_{j=1}^{N}\omega_{j}\prod_{\substack{k=1\\
k\neq j
}
}^{N}\frac{z-z_{k}}{z_{j}-z_{k}},\label{eq:interp_Lagr}
\end{equation}
in which case we have
\begin{equation}
\left\Vert \delta_{\psi}\right\Vert _{H^{2}}\leq\max_{j=1,\dots,N}\Bigl\Vert\prod_{\substack{k=1\\
k\neq j
}
}^{N}\frac{z-z_{k}}{z_{j}-z_{k}}\Bigr\Vert_{H^{2}}\left\Vert \boldsymbol{\delta_{\omega}}\right\Vert _{l^{1}}.\label{eq:delta_psi2}
\end{equation}
Nevertheless, we note that the choice of interpolant (\ref{eq:interp_Lagr})
is not good for practical usage (making way for the barycentric interpolation
formula, see \cite{Berrut}), but done only for the sake of analysis
(again recall that, by Lemma \ref{lem:psi_indep}, the final solution
$\tilde{g}_{0}$ does not depend on a particular choice of the interpolant).
In particular, we see that closedness of interpolation points amplifies
the bound in the right-hand side which corresponds to ill-conditioning
of the matrix $\mathcal{K}\left(z_{k},z_{j}\right)$ for the choice
of interpolant (\ref{eq:interp_expans}).

From this point on, we follow the same steps as in case \textit{(2)}
with (\ref{eq:dM2_dh})-(\ref{eq:dg0_dh}) replaced by

\begin{equation}
\delta_{M^{2}}=2\text{Re}\left\langle \psi+bg_{0}\left(\mu\right)-h,\delta_{\tilde{g}_{0}}\left(\mu\right)\right\rangle _{L^{2}\left(J\right)},\label{eq:dM2_dpsi}
\end{equation}
\begin{equation}
\delta_{\tilde{g}_{0}}=\delta_{\psi}-\mu\left(1+\mu\phi\right)^{-1}P_{+}\left(0\vee\bar{b}\delta_{\psi}\right),\label{eq:dg0_dpsi}
\end{equation}
where the latter variation is estimated from (\ref{eq:g0_sol_fin_alt_compl}).
Then, we have 
\begin{equation}
\left|\delta_{M^{2}}\right|\leq2M_{0}\left(\mu\right)\left(1+\left|\mu\right|m_{1}\right)\left\Vert \delta_{\psi}\right\Vert _{L^{2}\left(J\right)}\hspace{1em}\Rightarrow\hspace{1em}\left|\delta_{\mu}\right|\leq\frac{M_{0}\left(\mu\right)\left(1+\left|\mu\right|m_{1}\right)}{m_{0}\left\Vert \xi\right\Vert _{H^{2}}^{2}}\left\Vert \delta_{\psi}\right\Vert _{L^{2}\left(J\right)}.\label{eq:dmu_dpsi}
\end{equation}
Now 
\begin{equation}
\delta_{\tilde{g}}=\delta_{\tilde{g}_{0}}-b\left(1+\mu\phi\right)^{-1}P_{+}\left(0\vee\left(g_{0}+\bar{b}\left(\psi-h\right)\right)\right)\delta_{\mu},\label{eq:g_tilde2}
\end{equation}
and the resulting estimate \textit{(3)} follows using (\ref{eq:dg0_dpsi})-(\ref{eq:dmu_dpsi})
and recalling (\ref{eq:delta_psi2}).

\textit{Result (4):}

For a perturbation vector of positions $\boldsymbol{\delta_{z}}\in\mathbb{C}^{N}$,
the respective deviation of the interpolant (\ref{eq:interp_Lagr})
is given by
\begin{equation}
\delta_{\psi}=\sum_{j=1}^{N}\omega_{j}\sum_{\substack{k=1\\
k\neq j
}
}^{N}\left(\prod_{\substack{m=1\\
m\neq k,j
}
}^{N}\frac{z-z_{m}}{z_{j}-z_{m}}\right)\frac{\left(z-z_{j}\right)\left(\delta_{z}\right)_{k}-\left(z-z_{k}\right)\left(\delta_{z}\right)_{j}}{\left(z_{j}-z_{k}\right)^{2}},\label{eq:delta_psi3_gen}
\end{equation}
and can be bounded, for instance, as
\begin{equation}
\left\Vert \delta_{\psi}\right\Vert _{H^{2}}\leq2\omega_{0}\max_{j=1,\dots,N}\Bigl\Vert\prod_{\substack{m=1\\
m\neq j
}
}^{N}\left(z-z_{m}\right)\Bigr\Vert_{H^{2}}\frac{\underset{j=1,\dots,N}{\max}\sum_{\substack{k=1\\
k\neq j
}
}^{N}\left|z_{j}-z_{k}\right|^{-1}}{\underset{j=1,\dots,N}{\min}\prod_{\substack{k=1\\
k\neq j
}
}^{N}\left|z_{j}-z_{k}\right|}\left\Vert \boldsymbol{\delta}_{\boldsymbol{z}}\right\Vert _{l^{1}},\label{eq:delta_psi3}
\end{equation}
where $\omega_{0}:=\underset{j=1,\dots,N}{\max}\left|\omega_{j}\right|$.
However, more compact but even rougher bounds can be obtained in terms
of $d_{0}^{-N}$, where $d_{0}:=\underset{\begin{subarray}{c}
j,k=1,\dots,N\\
j\neq k
\end{subarray}}{\min}\left|z_{j}-z_{k}\right|$, which are undesirable for large number of points that are not uniformly
spaced. \\
This case is the most tedious one since now, in addition to $\psi$,
the Blaschke products undergo the variation
\begin{equation}
\delta_{b}=\sum_{j=1}^{N}\left(\prod_{\substack{m=1\\
m\neq j
}
}^{N}\frac{z-z_{m}}{1-\bar{z}_{m}z}\right)\frac{z\left(z-z_{j}\right)\left(\delta_{\bar{z}}\right)_{j}-\left(1-\bar{z}_{j}z\right)\left(\delta_{z}\right)_{j}}{\left(1-\bar{z}_{j}z\right)^{2}},\label{eq:delta_b_gen}
\end{equation}
which can be estimated as
\begin{eqnarray}
\left\Vert \delta_{b}\right\Vert _{H^{\infty}}&\leq&\max_{j=1,\dots,N}\left(\left\Vert \frac{z\left(z-z_{j}\right)}{\left(1-\bar{z}_{j}z\right)^{2}}\right\Vert _{H^{\infty}}+\left\Vert \left(1-\bar{z}_{j}z\right)^{-1}\right\Vert _{H^{\infty}}\right)\left\Vert \boldsymbol{\delta_{z}}\right\Vert _{l^{1}}\nonumber\\
&=&2\underset{j=1,\dots,N}{\max}\left\Vert \left(z-z_{j}\right)^{-1}\right\Vert _{H^{\infty}}\left\Vert \boldsymbol{\delta_{z}}\right\Vert _{l^{1}}.\label{eq:delta_b}
\end{eqnarray}

The rest of the computations is most similar to those in case \textit{(3)}
but slightly more general. Namely, (\ref{eq:dM2_dpsi}) and (\ref{eq:g_tilde2})
hold with
\begin{eqnarray*}
\delta_{\tilde{g}_{0}} & = & \delta_{\psi}+\delta_{b}\left(1+\mu\phi\right)^{-1}\left[P_{+}\left(\bar{b}\left(f\vee h\right)\right)+\mu P_{+}\left(0\vee\bar{b}\left(h-\psi\right)\right)\right]\\
 &  & +b\left(1+\mu\phi\right)^{-1}\left[P_{+}\left(\delta_{\bar{b}}\left(f\vee h\right)\right)+\mu P_{+}\left(0\vee\delta_{\bar{b}}\left(h-\psi\right)\right)\right]-\mu b\left(1+\mu\phi\right)^{-1}P_{+}\left(0\vee\bar{b}\delta_{\psi}\right)
\end{eqnarray*}
estimated from (\ref{eq:g0_sol_fin_alt_compl}). Therefore, 
\[
\left\Vert \delta_{\tilde{g}}\right\Vert _{H^{2}}\leq m_{1}\left(1+\dfrac{m_{1}M^{2}}{m_{0}\left\Vert \xi\right\Vert _{H^{2}}^{2}}\right)\left\Vert \delta_{\tilde{g}_{0}}\right\Vert _{H^{2}},
\]
\[
\left\Vert \delta_{\tilde{g}_{0}}\right\Vert _{H^{2}}\leq\left(1+\left|\mu\right|m_{1}\right)\left\Vert \delta_{\psi}\right\Vert _{H^{2}}+m_{1}\left(\left\Vert f\vee h\right\Vert _{L^{2}\left(\mathbb{T}\right)}+\left|\mu\right|\left\Vert h-\psi\right\Vert _{L^{2}\left(J\right)}\right)\left\Vert \delta_{b}\right\Vert _{H^{\infty}},
\]
and the final estimate \textit{(4)} follows.\end{proof}
\begin{rem}
\label{rem:xi0_estim}The quantity $\xi$ introduced in (\ref{eq:xi0_m0_m1})
enters the results \textit{(1)}-\textit{(4)} and should be bounded
away from zero. This fact, however, follows from Proposition \ref{prop:phi_inject}
and Corollary \ref{cor:M_pos}. Moreover, the norm of $\xi$ can be
\textit{a priori} estimated as 
\begin{equation}
\left\Vert \xi\right\Vert _{H^{2}}\geq\frac{1}{\left|\mu\right|}\left(M-\left\Vert \psi-h+bP_{+}\left(\bar{b}\left(f\vee h\right)\right)\right\Vert _{L^{2}\left(J\right)}\right)\label{eq:xi0_estim}
\end{equation}
by applying the triangle inequality for $L^{2}\left(J\right)$ norm
of the quantity 
\[
\psi+bg_{0}-h=\psi-h+bP_{+}\left(\bar{b}\left(f\vee h\right)\right)+\mu bP_{+}\left(0\vee\left(\bar{b}\left(h-\psi\right)-g_{0}\right)\right),
\]
which is a consequence of (\ref{eq:g0_pre_sol_fin}). Of course, the
estimate (\ref{eq:xi0_estim}) is useful only under assumption 
\begin{equation}
\left\Vert \psi-h+bP_{+}\left(\bar{b}\left(f\vee h\right)\right)\right\Vert _{L^{2}\left(J\right)}<M,\label{eq:stab_est_rem_cond}
\end{equation}
but we do not include it in formulation of the Proposition, since
this inequality can be achieved without imposing any restriction on
given boundary data $f$ and $h$ or increasing the bound $M$: since,
according to Lemma \ref{lem:psi_indep}, choice of $\psi$ does not
affect solution $\tilde{g}_{0}$ whose stability we are investigating,
one can consider another instance of bounded extremal problem, now
formulated for $\psi\in H^{2}\left(\mathbb{D}\right)$ meeting pointwise
constraints (\ref{eq:meas_cond}) and approximating $h-bP_{+}\left(\bar{b}\left(f\vee h\right)\right)\in L^{2}\left(J\right)$
on $J$ sufficiently closely (with precision $M$) with a finite bound
on $I$ without any additional information (meaning that for such a
problem $"I"=J$, $"h"=0$). To be more precise, given arbitrary $\psi_{0}\in H^{2}\left(\mathbb{D}\right)$
satisfying pointwise interpolation conditions (\ref{eq:meas_cond}) (for
instance, one can use (\ref{eq:interp_expans})), we represent $\psi=\psi_{0}+b\Psi$
and thus search for approximant $\Psi\in H^{2}\left(\mathbb{D}\right)$
to $"f"=\bar{b}\left(h-\psi_{0}\right)-P_{+}\left(\bar{b}\left(f\vee h\right)\right)\in L^{2}\left(J\right)$
such that $\left\Vert \Psi\right\Vert _{L^{2}\left(I\right)}=\tilde{M}$
for arbitrary $\tilde{M}\in\left(0,\infty\right)$. We also note that
in the case of reduction to the previously considered problem with
no pointwise data imposed (\cite{Alpay}, \cite{Baratchart-Leblond}),
i.e. when $\psi\equiv0$, $b\equiv1$, one does not have flexibility
of varying the interpolant. However, the stability estimates still
persist in the region of interest (that is, for $-1<\mu<0$) since
the condition (\ref{eq:stab_est_rem_cond}) is fulfilled as long as
$\mu<0$ due to (\ref{eq:g0_sol_fin}) evaluated at $\mu=0$ and (\ref{eq:e-M_monot}).
\end{rem}
$\hspace{1em}$
\begin{rem}
Results \textit{(3)}-\textit{(4)} technically show stability in terms of finite pointwise data sets $\left\{ \omega_{j}\right\} _{j=1}^{N}$, $\left\{ z_{j}\right\} _{j=1}^{N}$ in $l^{1}$ norm,
however, by the equivalence of norms in finite dimensions, the same results, but with
different bounds, also hold for $l^{p}$ norms, for any $p\in\mathbb{N}_{+}$
and $p=\infty$.
\end{rem}

\newpage
\section{\label{sec:num_res}Numerical illustrations and algorithmic aspects}

To illustrate the results of Sections \ref{sec:psi_choice}-\ref{sec:comp_issues}
and estimate practical computational parameters, we perform the following
numerical simulations. First of all, without loss of generality, choose
$J=\left\{e^{i\theta}: \theta\in\left[-\theta_{0},\theta_{0}\right]\right\}$ for some fixed $\theta_{0}\in\left(0,\pi\right)$.
In order to invert the Toeplitz operator in (\ref{eq:g0_sol_fin})
in a computationally efficient way, we realize projection of equation
(\ref{eq:g0_pre_sol_fin}) onto finite dimensional (truncated) Fourier basis $\left\{ z^{k-1}\right\} _{k=1}^{Q}$
for large enough $Q\in\mathbb{N}_{+}$ and look for approximate solution
in the form 
\begin{equation}
g\left(z\right)=\sum_{k=1}^{Q}g_{k}z^{k-1}.\label{eq:num_sol}
\end{equation}

Introducing, for $m$, $k\in\left\{ 1,\dots,Q\right\} $,
\begin{equation}
A_{k,m}:=\begin{cases}
\dfrac{\sin\left(m-k\right)\theta_{0}}{\pi\left(m-k\right)}, & m\neq k,\\
\theta_{0}/\pi, & m=k,
\end{cases}
\ \hspace{2em} A := \left[A_{k,m}\right]_{k,m=1}^Q \, , 
\label{eq:num_T_matr}
\end{equation}
\begin{equation}
s_{k}:=\left\langle \left(\bar{b}\left(f-\psi\right)\vee\left(1+\mu\right)\bar{b}\left(h-\psi\right)\right),e^{i\left(k-1\right)\theta}\right\rangle _{L^{2}\left(0,2\pi\right)}, \hspace{2em} \boldsymbol{s} :=\left[s_{k}\right]_{k=1}^Q, \label{eq:num_S_term}
\end{equation}
the projection equation 
\[
\left\langle \left(1+\mu\phi\right)g-P_{+}\left(\bar{b}\left(f-\psi\right)\vee\left(1+\mu\right)\bar{b}\left(h-\psi\right)\right),z^{k-1}\right\rangle _{L^{2}\left(\mathbb{T}\right)}=0
\]
becomes the vector equation (if we employ $1$ to denote the identity
$Q\times Q$ matrix)
\[
\left(1+\mu A\right)\boldsymbol{g}=\boldsymbol{s},  \hspace{2em} \boldsymbol{g} :=\left[g_{k}\right]_{k=1}^Q \label{eq:num_sys}
\]
with a real symmetric Toeplitz matrix which is computationally cheap
to invert: depending on the algorithm, asymptotic complexity of inversion
may be as low as ${\cal O}\left(Q\log^{2}Q\right)$ (see \cite{Codevico}
and references therein).

Now, in order to numerically demonstrate the monotonicity results (\ref{eq:e-M_monot}) for $e$ and
$M_{0}$ with respect to the parameter $\mu$
and to compare the behavior with that of series expansions (\ref{eq:M_series})-(\ref{eq:e_series}),
we run simulation for the following set of data. We choose $N=5$,
$\theta_{0}=\pi/3$, and 
\[
f\left(\theta\right)=f_{0}\left(\theta\right)+\dfrac{\epsilon}{\exp\left(i\theta\right)-0.4-0.3i}, \hspace{1em}
f_{0}\left(\theta\right):=\exp\left(5i\theta\right)+\exp\left(2i\theta\right)+1\in\mathcal{A}^{\psi,b} 
\] 
(when the parameter $\epsilon\neq0$,
obviously, $f\in L^{2}\left(I\right)$ does not extend inside the
disk as a $H^{2}$ function).
Further, $f_0$ is the restriction of the function $z^{5}+z^{2}+1$ satisfying pointwise interpolation
conditions (\ref{eq:meas_cond}) for points $\left\{ z_{j}\right\} _{j=1}^{5}$
and values $\left\{ \omega_{j}\right\} _{j=1}^{5}$  chosen as
given in Table \ref{tab:meas_data}. We also take $h\in L^{2}\left(J\right)$ as
\[
h\left(\theta\right)=\dfrac{1}{\exp\left(i\theta\right)-0.5i}.
\]
Based on the points $\left\{ z_{j}\right\} _{j=1}^{5}$, we construct the Blaschke
product according to (\ref{eq:Blaschke_prod}) with the choice of constant 
$\phi_{0}=0$ (obviously, final physical results should not depend on a choice of this auxiliary
parameter which is also clear from the solution formula (\ref{eq:g0_sol_fin_alt_compl})).
The interpolant $\psi$ was chosen as (\ref{eq:interp_expans}). Series expansions (\ref{eq:M_series})-(\ref{eq:e_series})
are straightforward to evaluate numerically since $F\left(k\right)$
involves the quantity $\xi_0$ given by (\ref{eq:xi_mu0_def}). The projections $P_{+}$ there
are computed by performing non-negative-power expansions as
(\ref{eq:num_sol}) whereas $\phi^{k}$ is simply iterative multiplication
of the first $Q$ Fourier coefficients of $\xi_0$ by the Toeplitz operator matrix
(\ref{eq:num_T_matr}). Such iterations are extremely cheap to compute once
the matrix $A$ is diagonalized.

Figures \ref{fig:multi_eps_on_I}-\ref{fig:multi_eps_on_J} illustrate
approximation errors on $I$ and discrepancies on $J$ versus the
parameter $\mu$ for different values of $\epsilon$ when the dimension
of the solution space is fixed to $Q=20$. Number of terms in the
series expansions (\ref{eq:M_series})-(\ref{eq:e_series}) was kept
fixed at $S=10$ (such that it is the maximal power of $\mu$ in the
series). It is remarkable that even such a low number of terms gives
bounds which are in very reasonable agreement with those computed
from solution up to relatively close neighborhood of $\mu=-1$. On
Figure \ref{fig:multi_S}, we further investigate change of deviation
of the series expansion from the solution computed numerically (which
is taken as a reference in this case, see the discussion in the next
paragraph) as more terms are taken into account in the expansions.

Figure \ref{fig:multi_Q} shows variation of the results with respect
to truncation of the solution basis while the parameter $\epsilon=0.5$
is kept fixed. Errors are compared to results obtained for $Q=50$
which is taken as reference. We conclude that a choice of $Q$ between
$10$ and $20$ is already sufficiently good for practical purposes.
In particular, we can regard the numerical computation results obtained
for $Q=20$ as those corresponding to faithful solution so to compare
them with what follows from the series expansions (\ref{eq:M_series})-(\ref{eq:e_series}).
Clearly, a choice of $Q<N=5$ does not make sense since, according
to Lemma \ref{lem:psi_indep}, the interpolant $\psi$ can be chosen
as a polynomial which, under such a restriction, will not even be able
to meet all pointwise constraints.

Finally, on Figure \ref{fig:F_k}, we plot auxiliary quantities $F\left(k\right)$
and $kF(k)$ versus $k$ which fundamentally enter the series expansions (\ref{eq:M_series})-(\ref{eq:e_series}).
In such a computation of multiple iterative action of the Toeplitz operator
$\phi$ on a fixed $H^{2}$ function mentioned
above, we used high value of $Q=50$ to prevent possible accumulation
of error steming from the truncation to a finite dimensional basis.
The first quantity $F\left(k\right)$ demonstates the expected decay
to zero, while the second one shows that the decay is not fast enough
to produce a summable series (that is, $F\left(k\right)\neq o\left(1/k\right)$
as $k\rightarrow\infty$) which illustrates the sharpness of Lemma \ref{lem:Fk_sum_fin}
and, on the other hand, is consistent with blow-up of $M_{0}^{2}\left(\mu\right)$
near $\mu=-1$.
\paragraph{Suggested computational algorithm}

Even though Figure \ref{fig:multi_S} shows good accuracy of approximation $e\left(\mu\right)$ and $M^{2}_{0}\left(\mu\right)$
from the series expansions (\ref{eq:M_series})-(\ref{eq:e_series}), it is clear, by nature of such expansions, that the convergence slows down as $\mu$ gets closer to $-1$, and hence, for the genuine values, the number of terms in the series should be increased dramatically. However, as it was mentioned, the quantities $F\left(k\right)$ are very cheap to compute. It remains only to estimate $S$, that is the number of terms in series for the accurate approximation of $e\left(\mu\right)$ and $M^{2}_{0}\left(\mu\right)$, but it suffices to perform such a calibration only once, namely, for the lowest value of $\mu$ in the computational range.
This suggests the following computational strategy:\\
\textbf{1}. Decide on the lowest value of the Lagrange parameter $\mu_{0}$ by checking the approximation rate computed from solving the system (\ref{eq:num_sys}). The quantity $e\left(\mu_{0}\right)$ will then be the best approximation rate on $I$.\\  
\textbf{2}. Determine the number of terms $S$ by comparing the approximation rate with that evaluated from the expansion (\ref{eq:e_series}) for $\mu_{0}$.\\
\textbf{3}. Fix $S$, precompute the values $F\left(k\right)$, $k=1,\dots,S$. Vary the parameter $\mu$ and evaluate the approximation and blow-up rates from the expansions (\ref{eq:M_series})-(\ref{eq:e_series}) in order to find a suitable trade-off.

\vspace{-5cm}

\begin{center}
\begin{table}[p]
\centering{}%
\begin{tabular}{c|c}
$z$ & $\omega$\tabularnewline
\hline 
\hline 
$\phantom{-}0.5+0.4i$ & $0.9852+0.3752i$\tabularnewline
\hline 
$-0.3+0.3i$ & $1.0097-0.1897i$\tabularnewline
\hline 
$\phantom{-}0.2+0.6i$ & $0.7811+0.2362i$\tabularnewline
\hline 
$\phantom{-}0.2-0.5i$ & $0.8328-0.1852i$\tabularnewline
\hline 
$\phantom{-}0.8-0.1i$ & $1.9069-0.3584i$\tabularnewline
\hline 
\end{tabular}\caption{\label{tab:meas_data}Interior pointwise data}
\end{table}

\par\end{center}

\begin{center}
\begin{figure}[p]
\hspace{-5em}
\begin{tabular}{cc}
\includegraphics[trim=0cm 7cm 0cm 7cm, clip=true, width=0.7\textwidth]{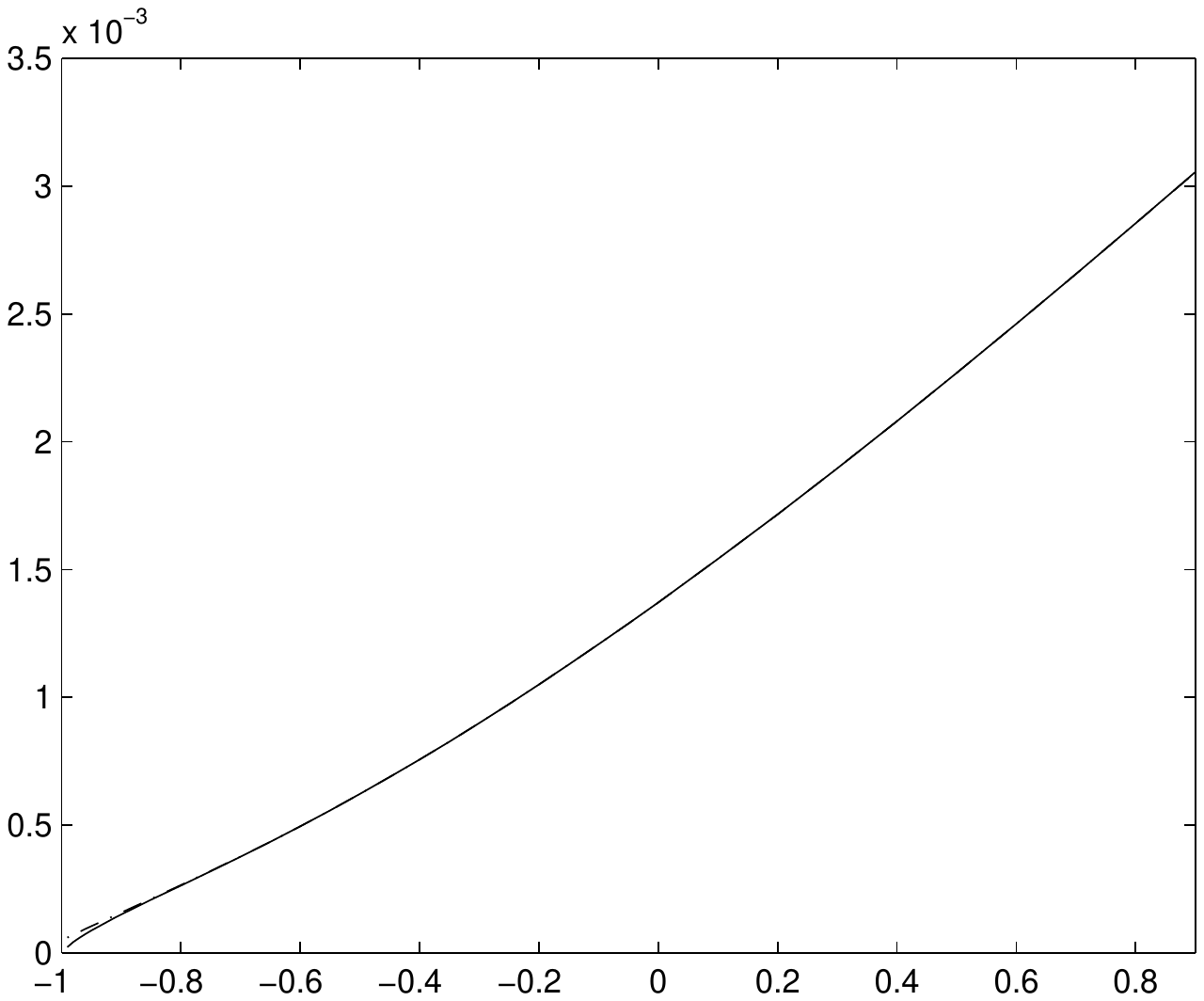} &
\hspace{-10em} \includegraphics[trim=0cm 7cm 0cm 7cm, clip=true, width=0.7\textwidth]{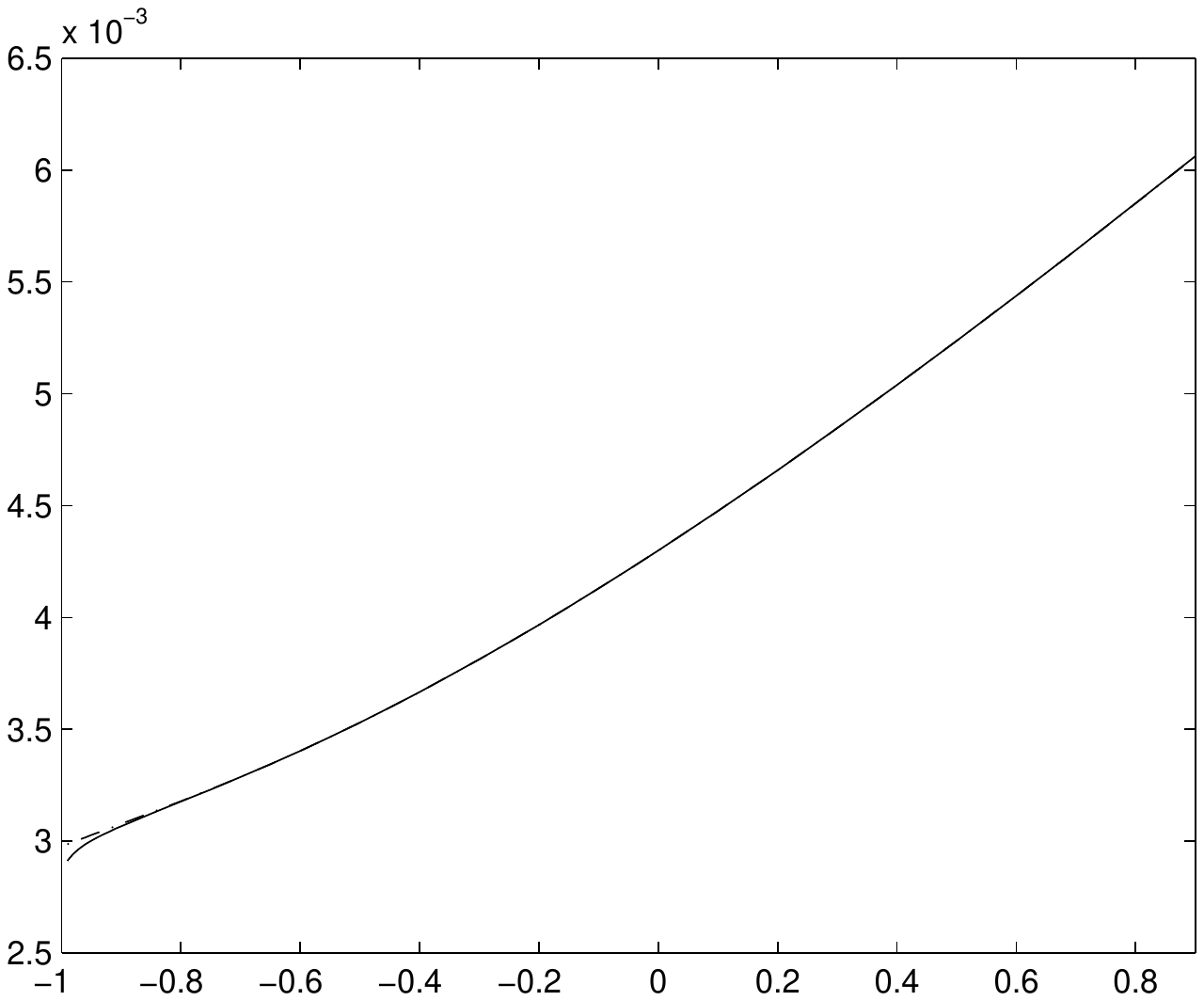}\tabularnewline
\includegraphics[trim=0cm 7cm 0cm 7cm, clip=true, width=0.7\textwidth]{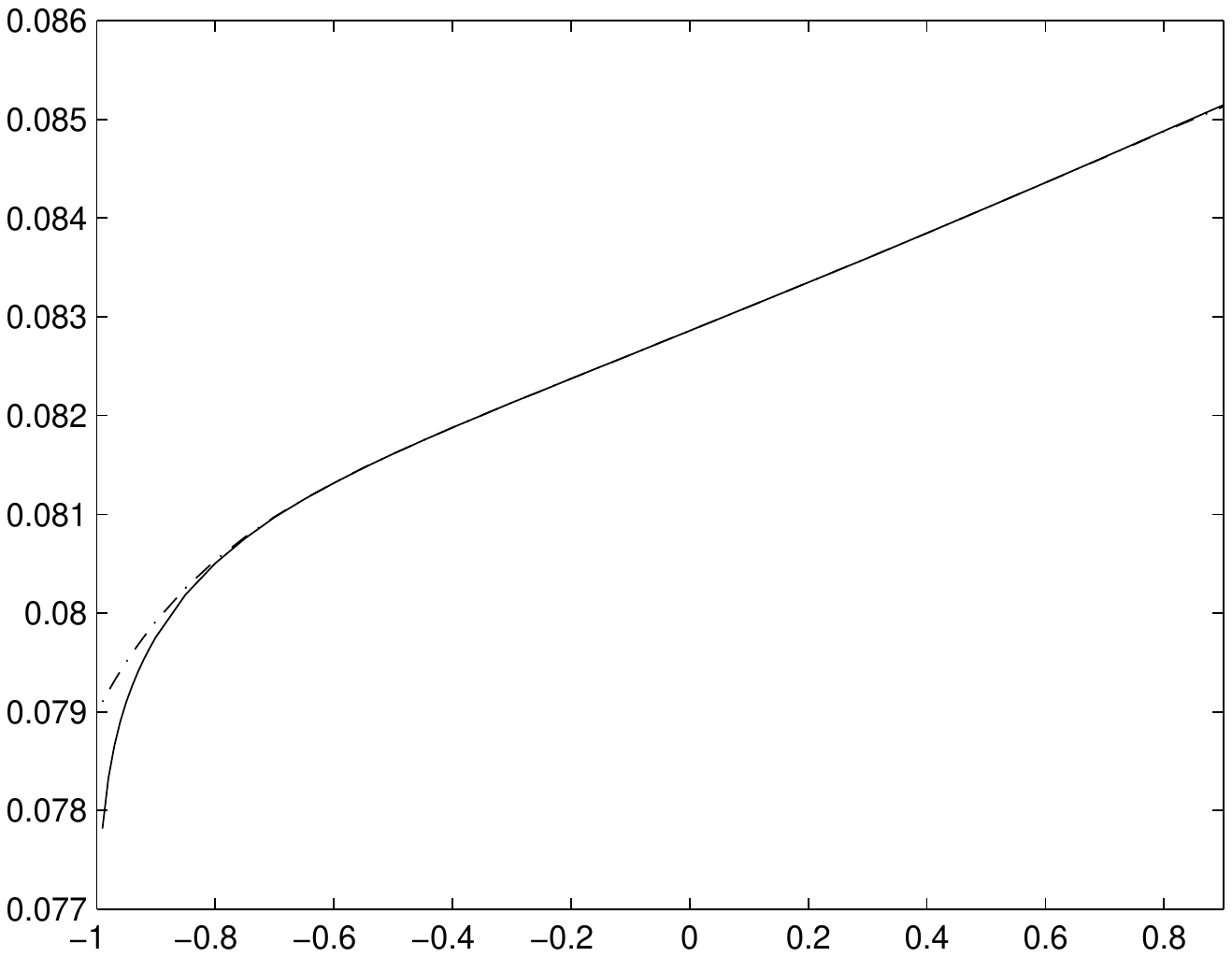} &
\hspace{-10em} \includegraphics[trim=0cm 7cm 0cm 7cm, clip=true, width=0.7\textwidth]{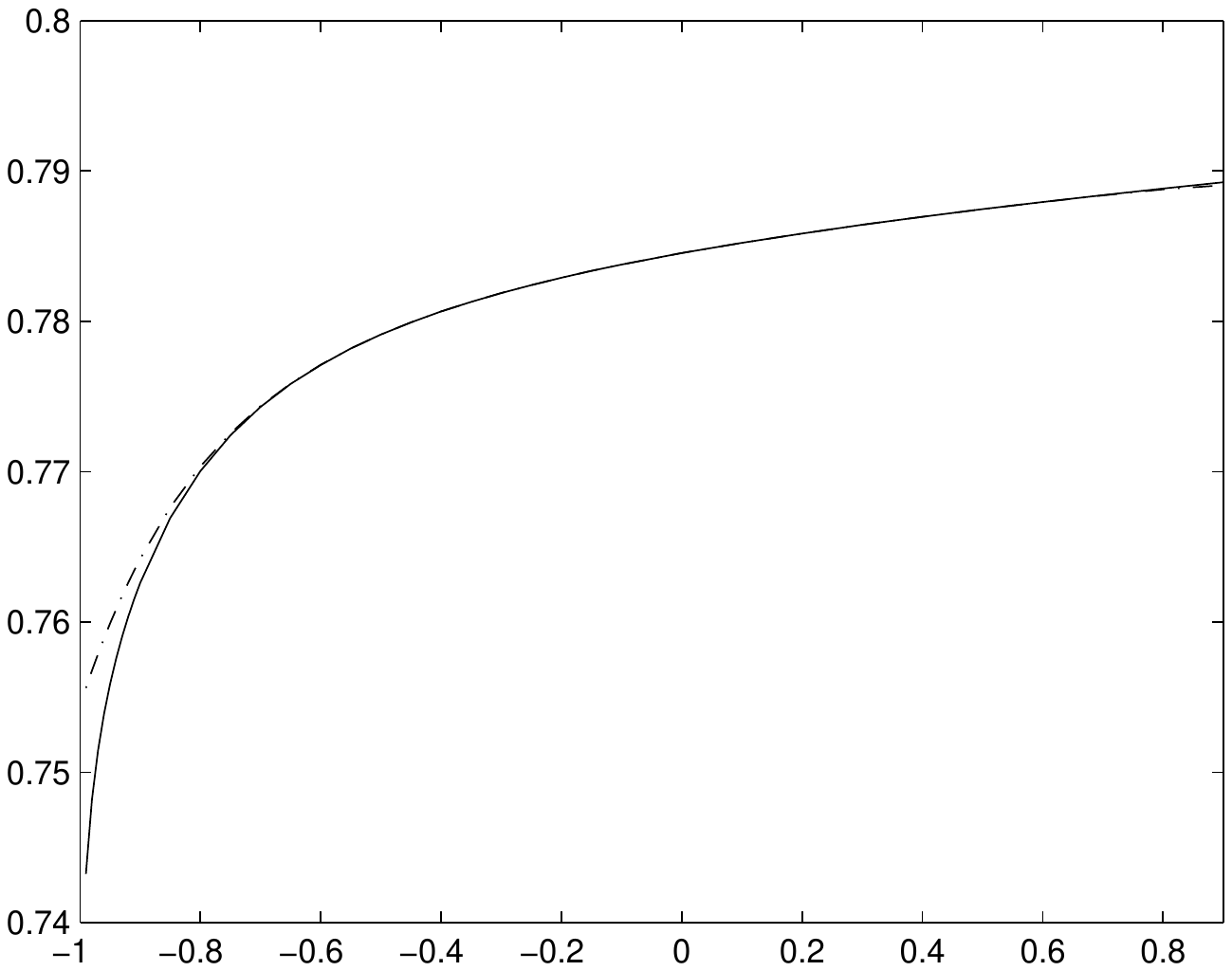}\tabularnewline
\end{tabular}

\centering{}\caption{\label{fig:multi_eps_on_I}Relative approximation error on $I$: $e\left(\mu\right)/\left\Vert f\right\Vert _{L^{2}\left(I\right)}$
from solution (solid) and series expansion (dash-dot) 
for $\epsilon$=0 (top left), $\epsilon=0.1$ (top right), $\epsilon=0.5$
(bottom left), $\epsilon=2$ (bottom right).}
\end{figure}

\par\end{center}

\begin{center}
\begin{figure}[p]
\hspace{-5em}
\begin{tabular}{cc}
\includegraphics[trim=0cm 7cm 0cm 7cm, clip=true, width=0.7\textwidth]{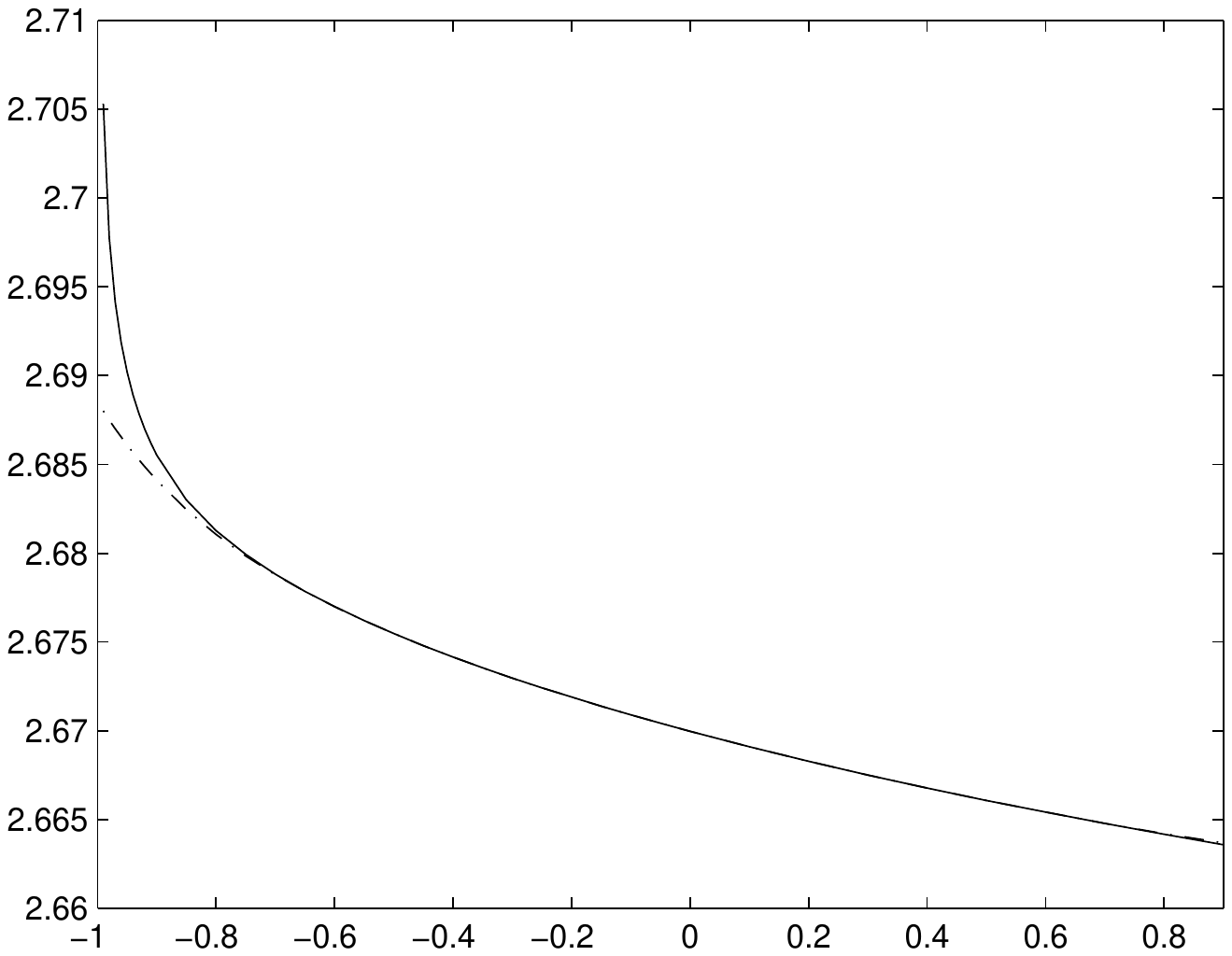} &
\hspace{-10em} \includegraphics[trim=0cm 7cm 0cm 7cm, clip=true, width=0.7\textwidth]{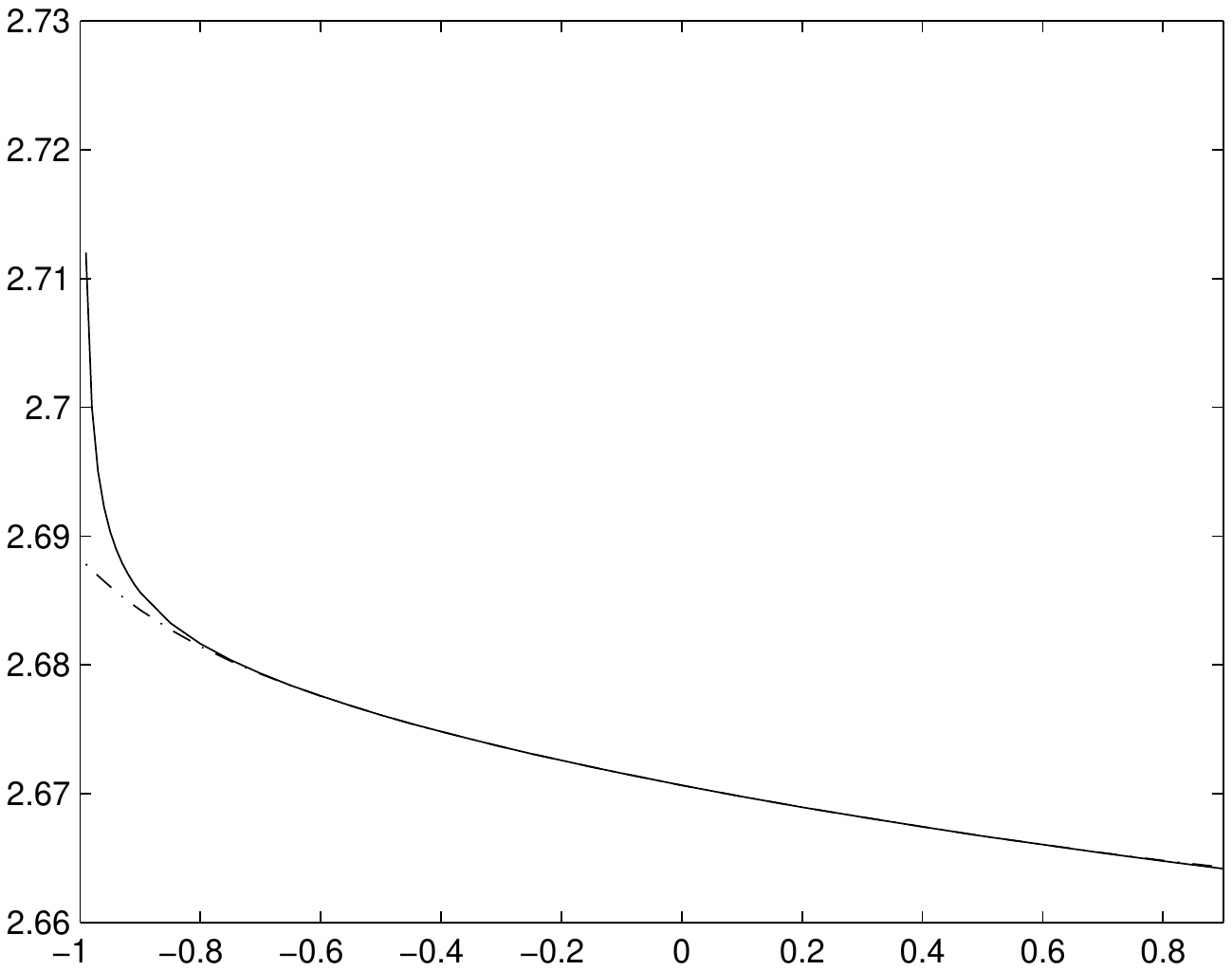}\tabularnewline
\includegraphics[trim=0cm 7cm 0cm 7cm, clip=true, width=0.7\textwidth]{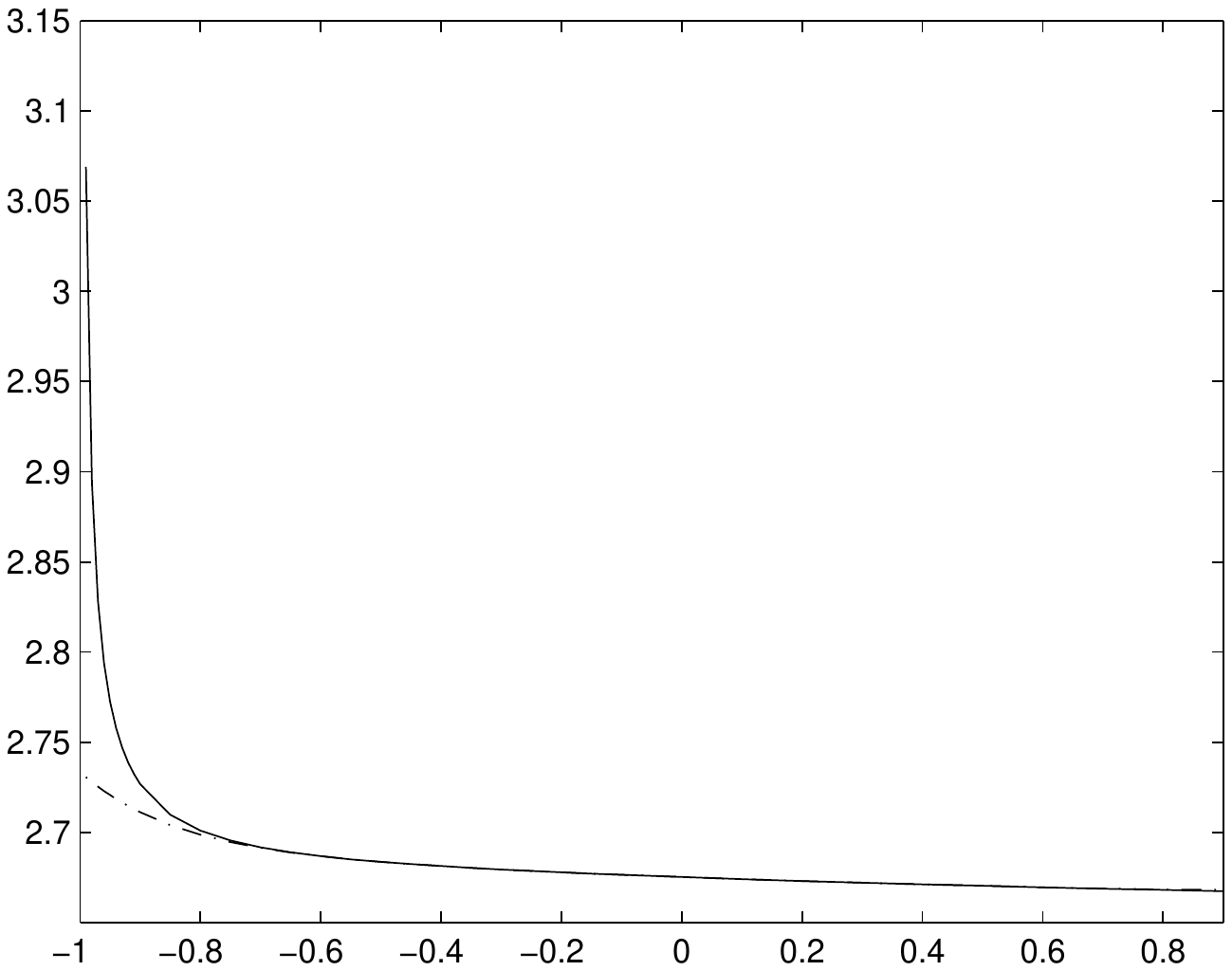} &
\hspace{-10em} \includegraphics[trim=0cm 7cm 0cm 7cm, clip=true, width=0.7\textwidth]{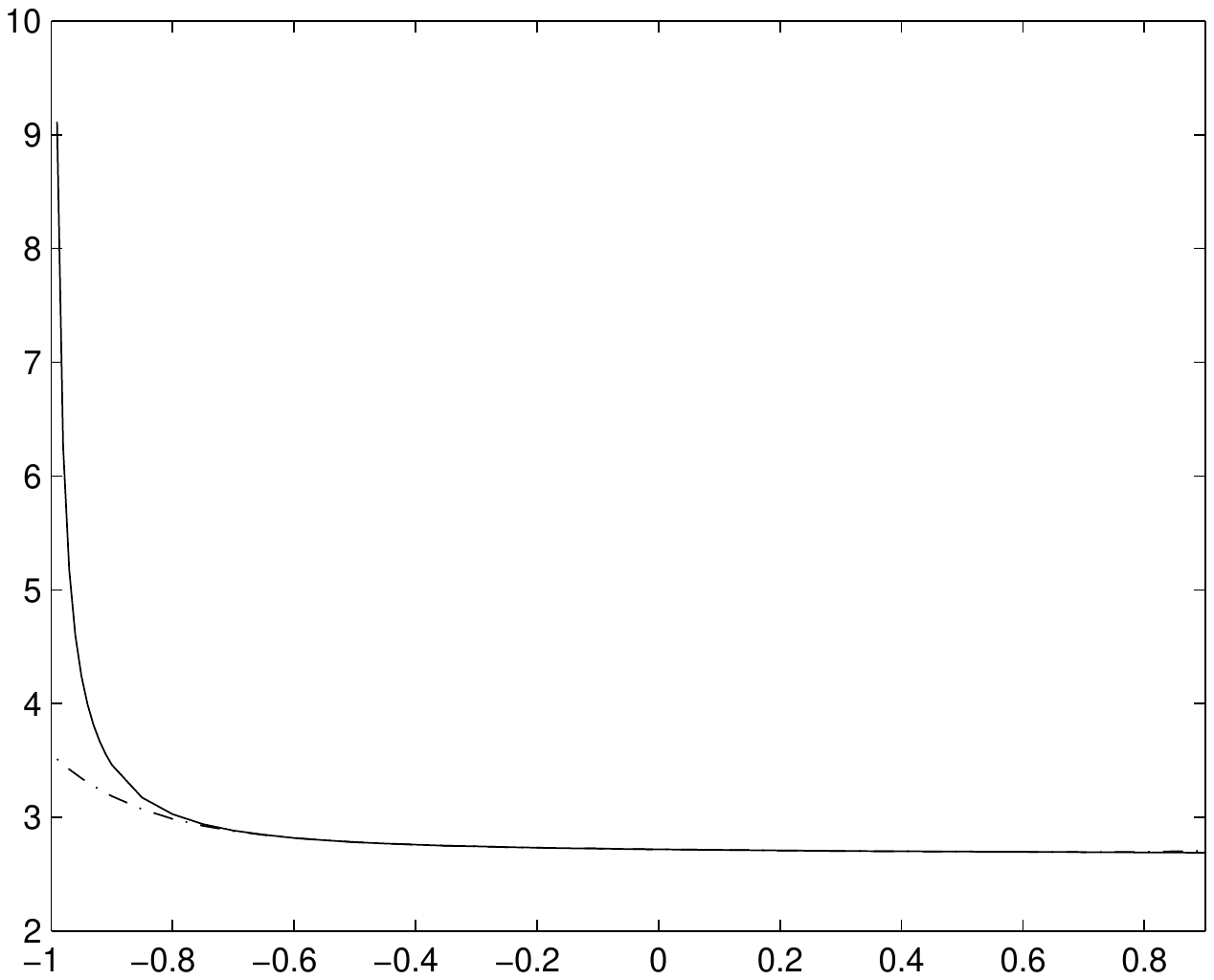}
\end{tabular}

\centering{}\caption{\label{fig:multi_eps_on_J}Relative discrepancy on $J$: $M_{0}^{2}\left(\mu\right)/\left\Vert h\right\Vert _{L^{2}\left(J\right)}$
from solution (solid) and series expansion (dash-dot) 
for $\epsilon$=0 (top left), $\epsilon=0.1$ (top right), $\epsilon=0.5$
(bottom left), $\epsilon=2$ (bottom right).}
\end{figure}

\par\end{center}

\begin{center}
\begin{figure}[p]

\hspace{-5em}
\begin{tabular}{cc}
\includegraphics[trim=0cm 7cm 0cm 7cm, clip=true, width=0.7\textwidth]{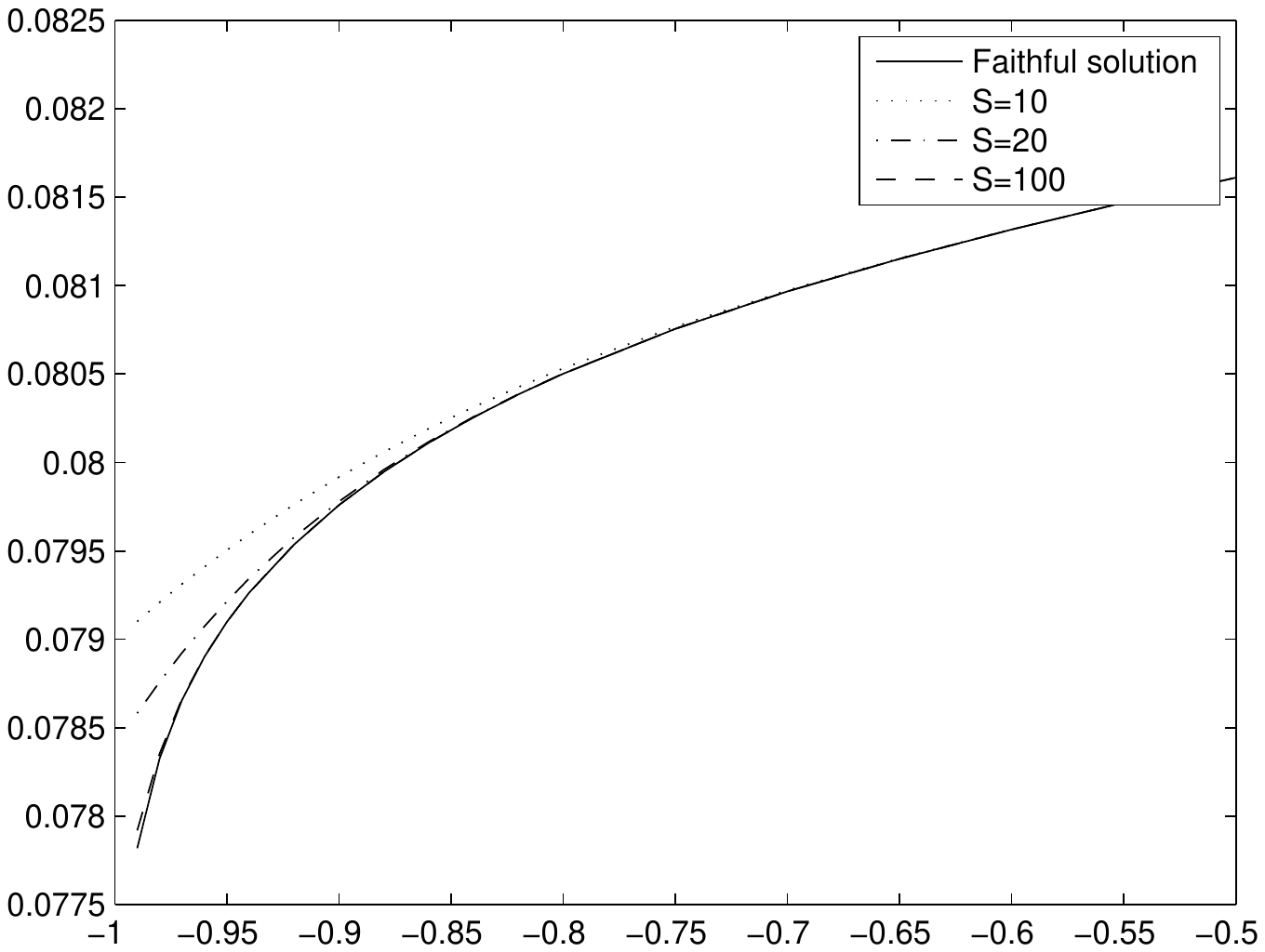} & 
\hspace{-10em}
\includegraphics[trim=0cm 7cm 0cm 7cm, clip=true, width=0.7\textwidth]{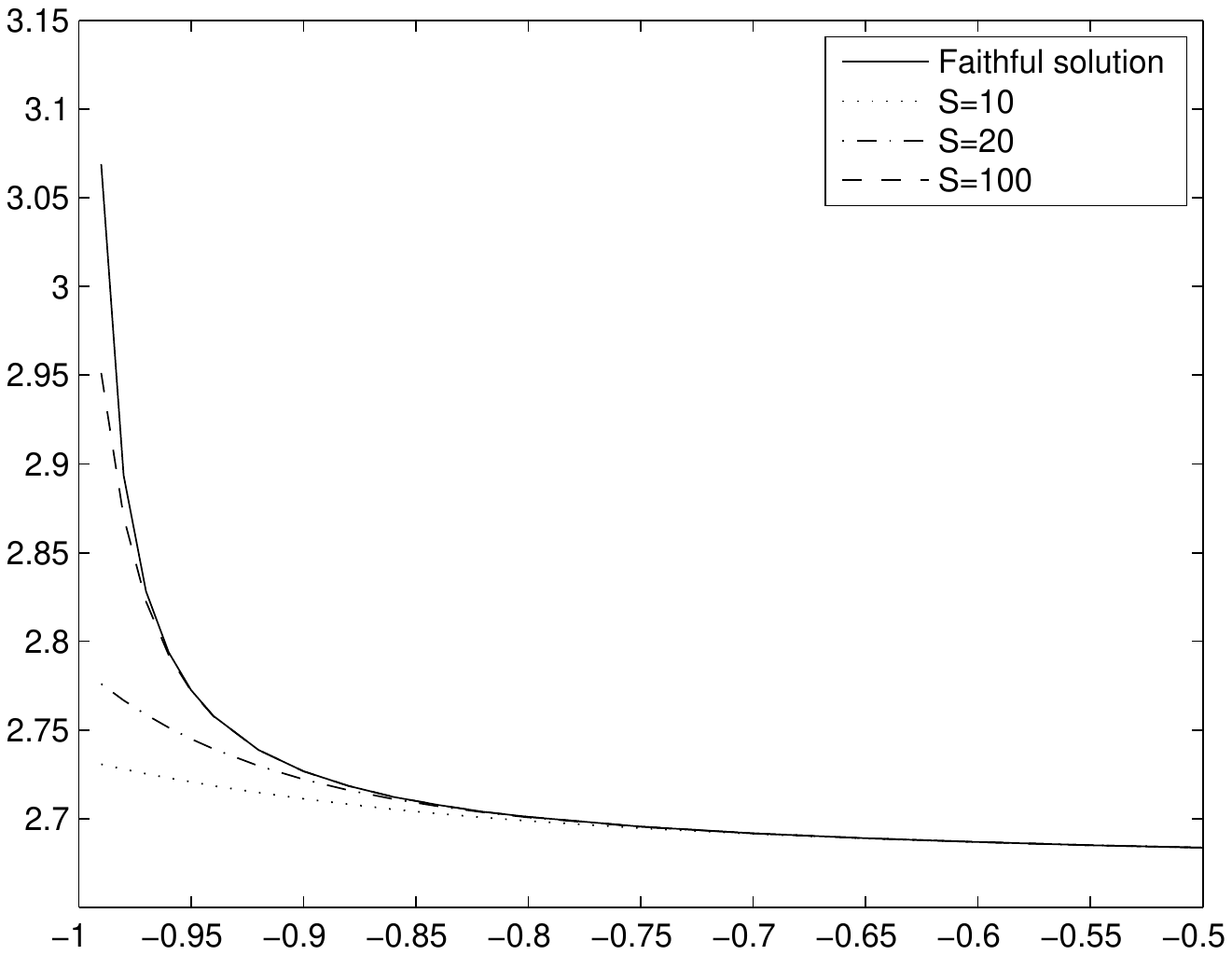}\tabularnewline
\end{tabular}

\centering{}\caption{\label{fig:multi_S}Relative approximation error on $I$ (left) and 
relative discrepancy error on $J$ (right).} 
\end{figure}

\par\end{center}

\begin{center}
\begin{figure}[p]
\hspace{-5em}
\begin{tabular}{cc}
\includegraphics[trim=0cm 7cm 0cm 7cm, clip=true, width=0.7\textwidth]{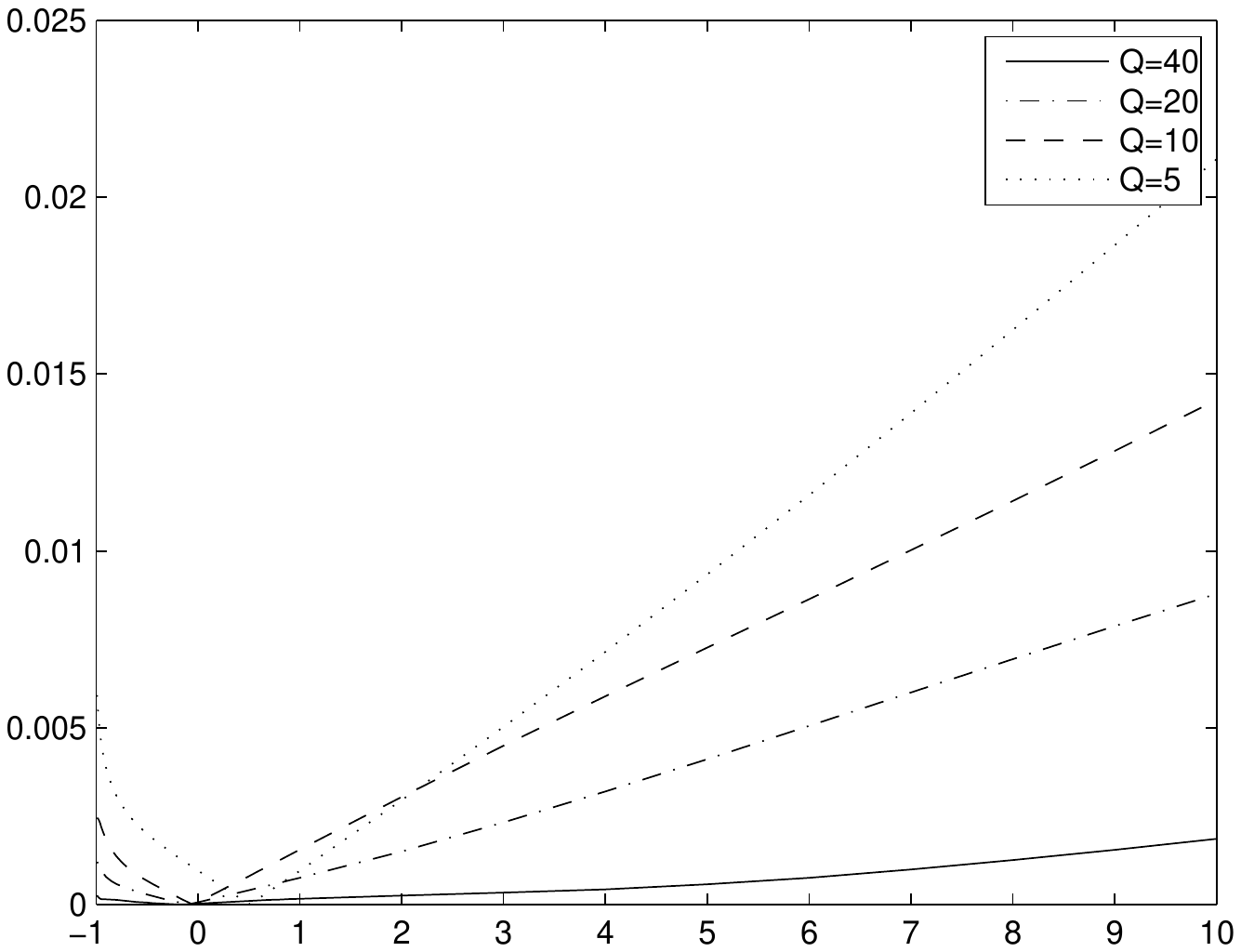} &
\hspace{-10em}
\includegraphics[trim=0cm 7cm 0cm 7cm, clip=true, width=0.7\textwidth]{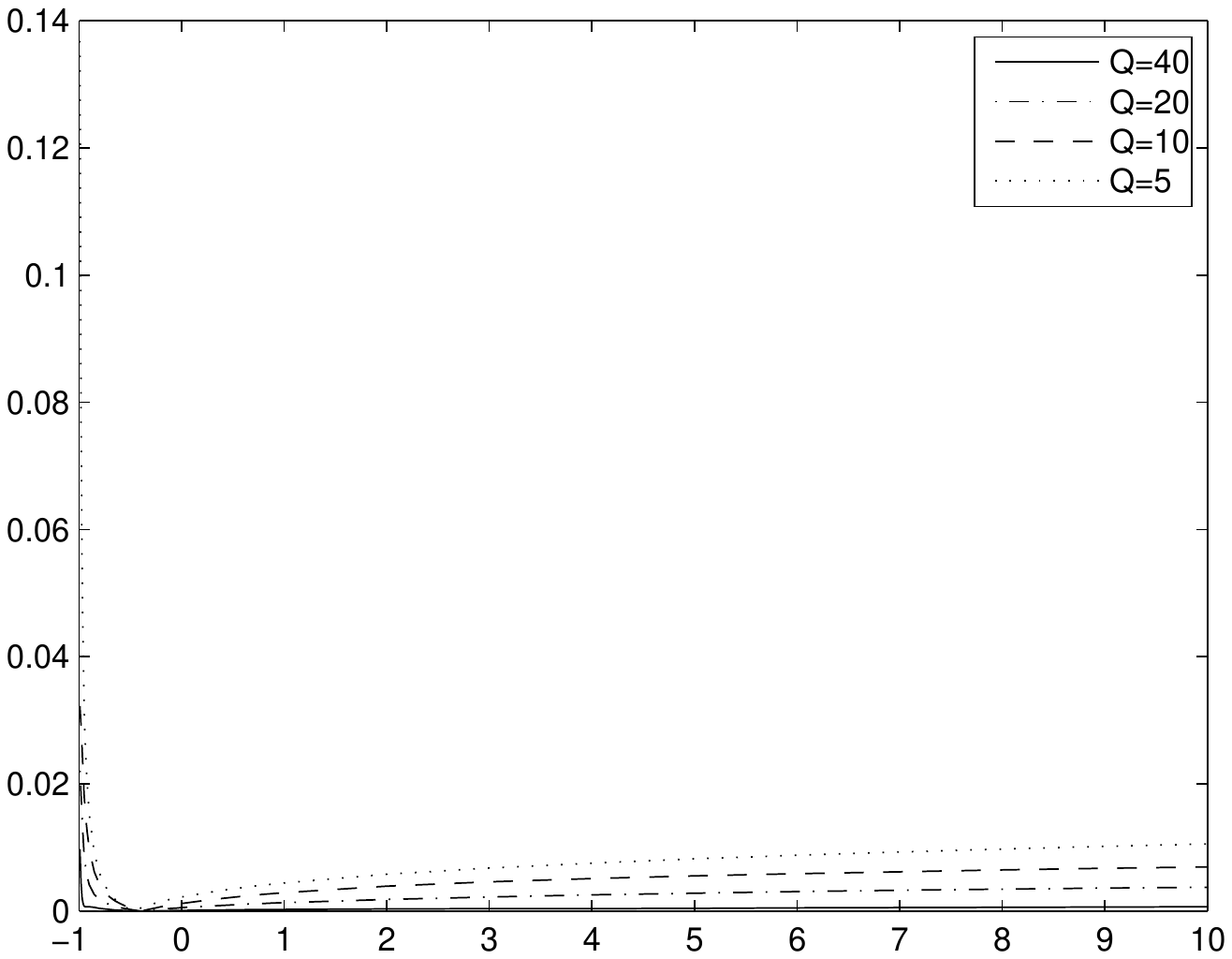}
\end{tabular}

\centering{}\caption{\label{fig:multi_Q}Errors on $I$ (left) and $J$ (right) compared
to results for $Q=50$.}
\end{figure}

\par\end{center}

\begin{center}
\begin{figure}[p]
\hspace{-5em}
\begin{tabular}{cc}
\includegraphics[trim=0cm 7cm 0cm 7cm, clip=true, width=0.7\textwidth]{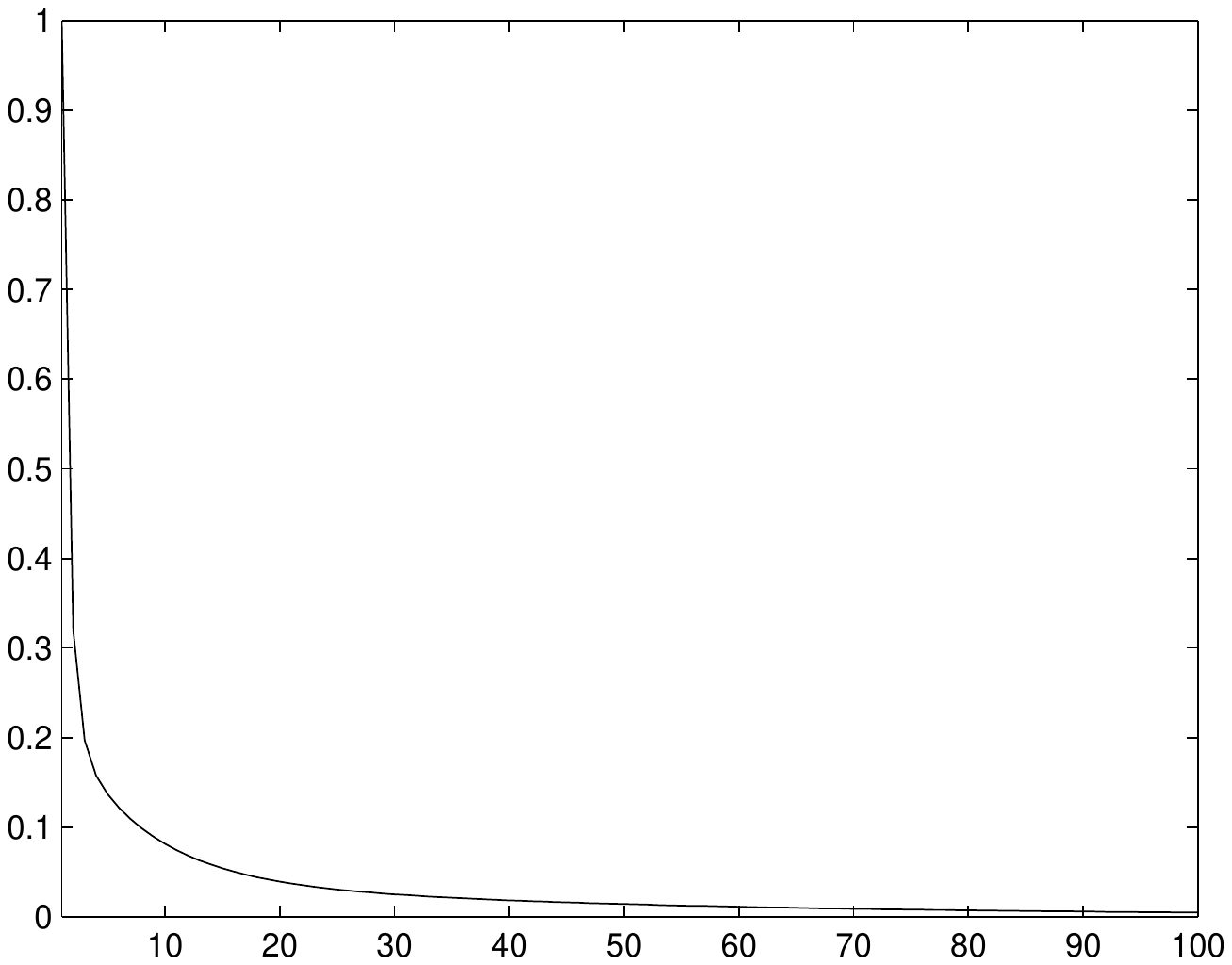} &
\hspace{-10em}
\includegraphics[trim=0cm 7cm 0cm 7cm, clip=true, width=0.7\textwidth]{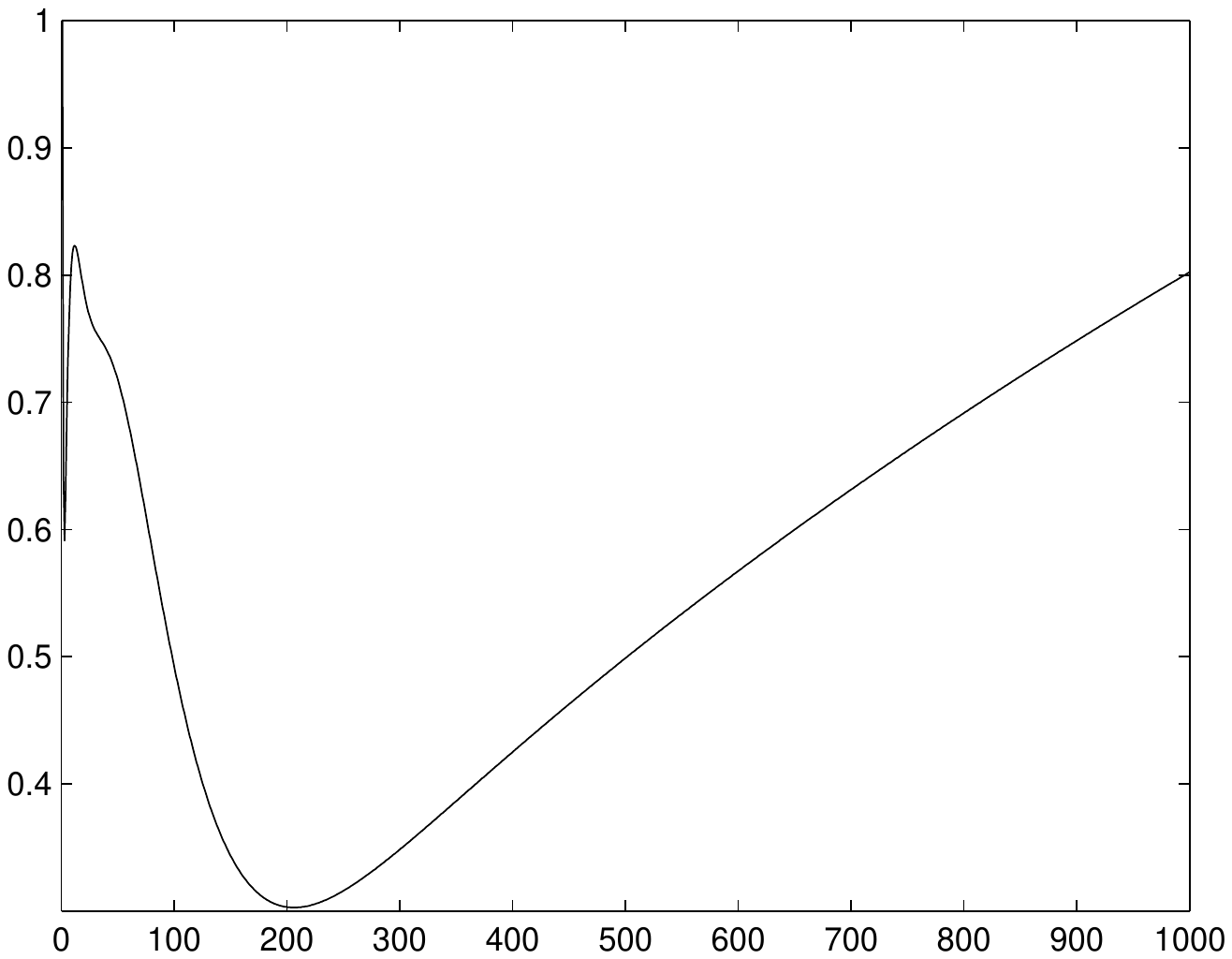}\tabularnewline
\end{tabular}

\centering{}\caption{\label{fig:F_k}Auxiliary quantities $F\left(k\right)$ and $kF\left(k\right)$
computed with $Q=50$.}
\end{figure}

\par\end{center}

\newpage{}

\FloatBarrier
\section{Conclusions}
\label{sec:conclu}

Motivated by some physical applications, we have introduced and solved a bounded extremal problem that extends the one of best norm-constrained
approximation of a given function on a subset of the circle by the
trace of a $H^{2}$ function \cite{Baratchart-Leblond}
to the case where additional pointwise constraints are imposed inside
the unit disk.

Under such a formulation, there were obtained new results which apply to a problem without pointwise constraints, as a particular case.
Namely, we suggested a method of computing the approximation rate and the
discrepancy growth in terms of a Lagrange parameter.
With an extra argument, the method was used to deduce asymptotic estimates
for quantities governing the approximation quality relying on a different
approach as compared to \cite{Baratchart-Grimm} and possible extensions of those results were discussed.
The new series expansion method was further numerically demonstrated to be very efficient especially beyond the asymptotic regime, thus making
redundant solving multiple instances of the bounded extremal problem
iteratively aiming to find the Lagrange parameter value corresponding
to a suitable trade-off between approximation rate and control of
the blow-up.

We have also observed a connection to a companion problem
which is intrinsic to the presence of internal pointwise data in the original
one. Solution of such a companion problem is computationally cheaper
which may be of big advantage when solving multiple instances of the original bounded extremal
problem. However, there is still room for further investigations in this direction. 

Another gap that was filled with the present work is stability estimates for bounded extremal problems with fixed constraints. Even without presence of pointwise data, the only available result, to our knowledge, is a proof of continuity of the solution with respect to approximated function without additional data ($h=0$; see \cite[Sect. 4.3.4]{Chaabane}).

Since the considered formulation is rather general and has potentially
many physical applications, there are number of issues one may
further want to look into. For example, it would be interesting to
see how the choice of positions of pointwise interpolation data affects the solution. How does increasing the number of points
boost the approximation rate and lower the discrepancy growth significantly?
With the same quantity $N$ of pointwise constraints, are the results better when points are located
closer to the boundary, when they are spread out evenly in the disk
or concentrated in an area or put along a curve? Physically, if positions
of sensors from which the boundary data are obtained are not precise,
does it worth to single out some far out points to be excluded from
interpolation of boundary data functions in order to be treated as
internal constraints? Though some insights into these questions can be obtained
numerically from already developed software, the precise analysis of some issues is expected to be quite involved. 

Another extension of the results may be considered in direction of
generalized analytic functions and annular domains \cite{Fischer-Leblond-Partington,Jaoua-Leblond-Mahjoub}.

\section*{\newpage{}APPENDIX}

\addcontentsline{toc}{section}{APPENDIX}
\begin{thm*}
(Hartman-Wintner)

Let $\xi\in L^{\infty}\left(\mathbb{T}\right)$: $\mathbb{T}\rightarrow\mathbb{R}$
be a symbol defining the Toeplitz operator $T_{\xi}:\, H^{2}\rightarrow H^{2}:$
$F\mapsto T_{\xi}\left(F\right)=P_{+}\left(\xi F\right)$. Then, the
operator spectrum is $\sigma\left(T_{\xi}\right)=\left[\text{ess inf }\xi,\,\text{ess sup }\xi\right]\subset\mathbb{R}$.\end{thm*}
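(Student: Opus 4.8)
The plan is to establish the two inclusions $\sigma(T_\xi)\subseteq[m,M]$ and $[m,M]\subseteq\sigma(T_\xi)$ separately, writing $m:=\operatorname{ess\,inf}\xi$ and $M:=\operatorname{ess\,sup}\xi$; the first is routine, while the second will be reduced to showing that a real symbol straddling a given value cannot yield an invertible Toeplitz operator — which is the point where the analyticity built into $H^2$ (as opposed to mere multiplication on $L^2$) has to be exploited.

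For the easy inclusion I would first note that $T_\xi$ is self-adjoint, because $\xi$ is real and $P_+$ is an orthogonal projection: for $F,G\in H^2$, $\langle P_+(\xi F),G\rangle_{L^2(\mathbb T)}=\langle \xi F,G\rangle_{L^2(\mathbb T)}=\langle F,\xi G\rangle_{L^2(\mathbb T)}=\langle F,P_+(\xi G)\rangle_{L^2(\mathbb T)}$. Hence $\sigma(T_\xi)\subseteq\mathbb R$, and since for a unit vector $F$ one has $\langle T_\xi F,F\rangle_{L^2(\mathbb T)}=\frac1{2\pi}\int_0^{2\pi}\xi(e^{i\theta})\,|F(e^{i\theta})|^2\,d\theta\in[m,M]$, the spectrum, being contained in the closure of the numerical range, lies in $[m,M]$.

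For the reverse inclusion it suffices, on replacing $\xi$ by $\xi-\lambda$ for $\lambda\in[m,M]$, to prove that a real $\eta\in L^\infty(\mathbb T)$ with $0\in[\operatorname{ess\,inf}\eta,\operatorname{ess\,sup}\eta]$ cannot make $T_\eta$ invertible. The key step, which I expect to be the main obstacle to write out cleanly, goes as follows. Suppose $T_\eta$ is invertible and set $f_0:=T_\eta^{-1}\mathbf 1\in H^2$, with $\mathbf 1$ the constant function. Then $P_+(\eta f_0)=\mathbf 1$, i.e.\ $\eta f_0-\mathbf 1\in\bar H_0^2$, so $\eta f_0=\mathbf 1+\bar h$ with $h\in H^2$, $h(0)=0$; conjugating (this is where reality of $\eta$ is essential) gives $\eta\bar f_0=\mathbf 1+h$. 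Eliminating $\eta$ between the two identities yields $f_0(\mathbf 1+h)=\overline{f_0(\mathbf 1+h)}$, so $f_0(\mathbf 1+h)\in H^1$ is real-valued on $\mathbb T$; its Fourier coefficients of nonzero index therefore all vanish, so it is a real constant $c_0$, and evaluating at the origin (using $h(0)=0$) gives $c_0=f_0(0)$, which is nonzero since $f_0\not\equiv 0$ and $H^2$ has no zero divisors. Substituting back, $\eta=c_0/|f_0|^2$ a.e.\ on $\mathbb T$, so $\eta$ has the constant sign of $c_0$; this already excludes genuinely sign-changing symbols.

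It then remains to upgrade the conclusion that $\eta$ is one-signed to the statement $0\notin[\operatorname{ess\,inf}\eta,\operatorname{ess\,sup}\eta]$. Assume $c_0>0$, so $\eta\ge 0$ a.e.; I would test $T_\eta$ on the reproducing kernels $\mathcal K(w,\cdot)=(1-\bar w\,\cdot)^{-1}$, for which $\langle T_\eta\mathcal K(w,\cdot),\mathcal K(w,\cdot)\rangle_{L^2(\mathbb T)}/\|\mathcal K(w,\cdot)\|_{L^2(\mathbb T)}^2$ equals the Poisson integral of $\eta$ at $w$, and this tends by Fatou's theorem to $\eta(\zeta)$ as $w\to\zeta$ non-tangentially for a.e.\ $\zeta\in\mathbb T$. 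Letting $w$ approach boundary points where $\eta$ is arbitrarily close to $\operatorname{ess\,inf}\eta$ shows $\inf\sigma(T_\eta)=\operatorname{ess\,inf}\eta$; were this $0$, the positive self-adjoint operator $T_\eta$ could not be invertible, a contradiction, so $\operatorname{ess\,inf}\eta>0$ (the case $c_0<0$ is symmetric). Taking contrapositives and undoing the substitution $\eta=\xi-\lambda$ gives $[m,M]\subseteq\sigma(T_\xi)$, and together with the easy inclusion this yields $\sigma(T_\xi)=[m,M]$. Throughout, the single technical care-point is that a nonzero element of $H^2$ vanishes only on a set of measure zero — available from Corollary~\ref{cor:uniqueness_by_subset} and the identity theorem — which is what legitimizes the division by $f_0$ above and the boundary-limit argument.
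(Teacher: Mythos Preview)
Your proof is correct and follows the paper's approach: both apply $T_{\xi-\mu}^{-1}$ to the constant $\mathbf 1$ and show that $(\xi-\mu)\,|f_0|^2$ is a nonzero constant, forcing $\xi-\mu$ to be one-signed; you reach this by conjugating $\eta f_0=1+\bar h$ and recognizing $f_0(1+h)$ as a real-valued $H^1$ function, while the paper computes $\langle T_{\xi-\mu}f,z^n f\rangle$ two ways, which is the same Fourier-coefficient argument in different clothing. Your reproducing-kernel step for the endpoints is sound but superfluous, since once $(m,M)\subseteq\sigma(T_\xi)$ the closedness of the spectrum already delivers $[m,M]\subseteq\sigma(T_\xi)$.
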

\begin{proof}
We give a proof combining ideas from both \cite[Th. 7.20]{Douglas} and \cite[Th. 4.2.7]{Nikolski} in a way such that it is
short and self-consistent. 

First of all, since $\xi$ is a real-valued function, $T_{\xi}$ is
self-adjoint, and hence $\sigma\left(T_{\xi}\right)\subset\mathbb{R}$.\\
Now, to prove the result, we employ definition of $\sigma\left(T_{\xi}\right)$
as complement of resolvent set, namely, given $\mu\in\mathbb{R}$,
we aim to show that the existence and boundedness of $\left(T_{\xi}-\mu I\right)^{-1}$
on $H^{2}$ (i.e. when $\mu$ is in the resolvent
set) necessarily imply that either $\xi-\mu>0$ or $\xi-\mu<0$ a.e.,
in other words, $\left(\xi-\mu\right)$ must be strictly uniform in
sign a.e. on $\mathbb{D}$.\\
Assume $\mu$ is fixed so that the inverse of $\left(T_{\xi}-\mu I\right)$
exists and bounded on the whole $H^{2}$, in
particular, on constant functions. This means that there is $f\in H^{2}$
such that 
\[
T_{\xi-\mu}f=\left(T_{\xi}-\mu I\right)f=1.
\]
For any $n\in\mathbb{N}_{+}$, denoting $f_{k}$ the coefficients
of Fourier expansion of $f$ on $\mathbb{T}$, let us evaluate
\[
\left\langle T_{\xi-\mu}f,z^{n}f\right\rangle _{L^{2}\left(\mathbb{T}\right)}=\left\langle 1,z^{n}f\right\rangle _{L^{2}\left(\mathbb{T}\right)}=\left\langle z^{n},\bar{f}\right\rangle _{L^{2}\left(\mathbb{T}\right)}=\sum_{k=0}^{\infty}f_{k}\int_{0}^{2\pi}e^{i\left(n+k\right)\theta}d\theta=0.
\]
On the other hand, since $z^{n}f\in H^{2}$,
we have
\[
\left\langle T_{\xi-\mu}f,z^{n}f\right\rangle _{L^{2}\left(\mathbb{T}\right)}=\left\langle \left(\xi-\mu\right)f,z^{n}f\right\rangle _{L^{2}\left(\mathbb{T}\right)}=\int_{\mathbb{T}}\left(\xi-\mu\right)\left|f\right|^{2}\bar{z}^{n}d\sigma,
\]
and thus
\[
\int_{\mathbb{T}}\left(\xi-\mu\right)\left|f\right|^{2}z^{-n}d\sigma=0,\hspace{1em}n\in\mathbb{N}_{+},
\]
which implies that $\left(\xi-\mu\right)\left|f\right|^{2}$ cannot
be an analytic function on $\mathbb{D}$ unless it is constant.\\
However, since $\xi$ and $\mu$ are real-valued, taking conjugation
yields
\[
\int_{\mathbb{T}}\left(\xi-\mu\right)\left|f\right|^{2}z^{n}d\sigma=0,\hspace{1em}n\in\mathbb{N}_{+},
\]
which prohibits $\left(\xi-\mu\right)\left|f\right|^{2}$ being non-analytic
on $\mathbb{D}$ either. Therefore, $\left(\xi-\mu\right)\left|f\right|^{2}=\text{const}$,
and hence $\left(\xi-\mu\right)$ has constant sign a.e. on $\mathbb{D}$
that proves the result.
\end{proof}
$\hspace{1em}$
\newpage


\begin{thebibliography}{References}
\bibitem{Ablowitz} M. Ablowitz, S. Fokas, ``Complex Variables: Introduction
and Applications'', Cambridge University Press, 2003.

\bibitem{Abramowitz-Stegun} M. Abramowitz, I. Stegun, ``Handbook
of Mathematical Functions with Formulas, Graphs, and Mathematical
Tables'', Dover Publications, 1964. 

\bibitem{Alessandrini} G. Alessandrini, ``Examples of instability in inverse boundary-value problems'', Inverse Problems, 13, 887-897, 1997.


\bibitem{Aizenberg} L. Aizenberg, ``Carleman's formulas in complex
analysis'', Kluwer Academic Publishers, 1993.

\bibitem{Alpay} D. Alpay, L. Baratchart, J. Leblond, ``Some extremal
problems linked with identification from partial frequency data'',
Proc. 10 Conf. Analyse Optimisation Systemes, Sophia-Antipolis, Springer-Verlag,
LNCIS 185, 563-573, 1992.

\bibitem{Baratchart-Leblond} L. Baratchart, J. Leblond, ``Hardy
approximation to $L^{p}$ functions on subsets of the circle with
$1\leq p<\infty$'', Constructive Approximation, 14, 41-56, 1998. 

\bibitem{Baratchart-Grimm} L. Baratchart, J. Grimm, J. Leblond, J.
Partington, ``Asymptotic estimates for interpolation and constrained
approximation in $H^{2}$ by diagonalization of Toeplitz operators'',
Integral Equations and Operator Theory, 45, 269-299, 2003.

\bibitem{Baratchart-Leblond-Partington} L. Baratchart, J. Leblond,
J. Partington, ``Hardy approximation to $L^{\infty}$ functions on
subsets of the circle'', Constructive Approximation, 12, 423-436,
1996.

\bibitem{Chaabane} S. Chaabane, ``Etude de quelques probl\`emes inverses'', Th\`ese de Doctorat, Universit\'e de Tunis II - Ecole Natinonale d'Ing\'enieurs de Tunis, 1999.

\bibitem{Rigat-Russ} 
L.~Baratchart, J.~Leblond, S.~Rigat,  E.~Russ.
``Hardy spaces of the conjugate Beltrami equation'',
{J. Funct. Anal.}, 259, 2, 384--427, 2010.

\bibitem{Berrut} J.-P. Berrut, L.-N. Trefethen, ``Barycentric Lagrange
Interpolation'', SIAM Review, 46, 3, 501-517, 2004.

\bibitem{Codevico} G. Codevico, G. Heinig, M. Van Barel, ``A superfast
solver for real symmetric Toeplitz systems using real trigonometric
transformations'', Numerical Linear Algebra With Applications, 12
(8), 699-713, 2005.

\bibitem{Debnath} L. Debnath, P. Mikusinski, ``Introduction to Hilbert
spaces with applications'', Academic Press, 1990.

\bibitem{Douglas} L. G. Douglas, ``Banach algebra techniques in
operator theory'', Academic Press, 1972.

\bibitem{Duren} P. L. Duren, ``Theory of $H^{p}$ spaces'', Academic
Press, 1970.

\bibitem{Duren-Williams} P. L. Duren, D. L. Williams, ``Interpolation
problems in function spaces'', J. Funct. Anal., 9, 75-86, 1972.

\bibitem{Fischer-Leblond-Partington} Y. Fischer, J. Leblond, J. Partington,
E. Sincich, ``Bounded extremal problems in Hardy spaces for the conjugate
Beltrami equation in simply connected domains'', Applied and Computational
Harmonic Analysis, 31, 264-285, 2011.

\bibitem{Fischer} Y. Fischer, ``Approximation dans des classes de fonctions analytiques g\'en\'eralis\'ees et r\`esolution de probl\`emes inverses pour les tokamaks'', Th\`ese de Doctorat, Universit\'e de Nice-Sophia Antipolis, 2011.

\bibitem{Garnett} J. B. Garnett, ``Bounded Analytic Functions'',
Academic Press, 1981.

\bibitem{Goluzin} G. M. Goluzin, V. I. Krylov, ``Generalized
Carleman formula and its application to analytic continuation of functions'',
Matematicheskii Sbornik, 40, 144-149, 1933.

\bibitem{Hoffman} K. Hoffman, ``Banach Spaces of Analytic Functions'',
Prentice Hall, 1962.

\bibitem{Jaoua-Leblond-Mahjoub} M. Jaoua, J. Leblond, M. Mahjoub,
``Robust numerical algorithms based on analytic approximation for
the solution of inverse problems in annular domains'', IMA Journal
of Applied Mathematics, 74, 481-506, 2009. 

\bibitem{Krein-Nudel'man} M. G. Krein, P. Ya. Nudel'man, ``Approximation of $L_{2}\left(\omega_{1},\omega_{2}\right)$ functions by minimum energy transfer functions of linear systems'', Probl. Peredachi Inf., 11:2, 37-60, 1975. 

\bibitem{Lavrentiev} M. Lavrentiev, ``Some improperly
posed problems of mathematical physics'', Springer, 1967.

\bibitem{Lax} P. Lax, ``Functional Analysis'', Wiley-Interscience,
2002.

\bibitem{Martinez} R. A. Martinez-Avendano, P. Rosenthal, ``An Introduction
to Operators on the Hardy-Hilbert Space'', Springer, 2006.

\bibitem{Nehari} Z. Nehari, ``Conformal mapping'', Dover Publications,
2011.

\bibitem{Morse-Feshbach} P. M. Morse, H. Feshbach, ``Methods of
Theoretical Physics. Part I'', McGraw-Hill, 1953.

\bibitem{Nikolski} N. K. Nikolski, ``Operators, Functions and Systems:
An Easy Reading. Volume 1: Hardy, Hankel and Toeplitz'', American
Mathematical Society, 2001.

\bibitem{Olver} F. W. J. Olver, D. W. Lozier, R. F. Boisvert, C.
W. Clark, ``NIST Handbook of Mathematical Functions'', Cambridge
University Press, 2010.

\bibitem{Patil} D. J. Patil, ``Representation of $H^{p}$-functions'',
Bul. Am. Math. Soc., 78 (4), 617-620, 1972.

\bibitem{Rosenblum} M. Rosenblum, ``Self-adjoint Toeplitz operators
and associated orthonormal functions'', Proc. Amer. Math. Soc., 13,
590-595, 1962.

\bibitem{Rosenblum-Rovnyak} M. Rosenblum, J. Rovnyak, ``Hardy Classes
and Operator Theory'', Oxford, 1985.

\bibitem{Rudin} W. Rudin, ``Real and Complex Analysis'', McGraw-Hill,
1982.

\bibitem{Schvedenko} S. V. Schvedenko, ``Interpolation in some Hilbert
spaces of analytic functions'', Matematicheskie Zametki, 15 (1),
101-112, 1974.

\bibitem{Szego} G. Szego, ``Orthogonal Polynomials'', American
Mathematical Soc., 1992.

\bibitem{Varaia} P. Varaia, ``Notes on Optimization'', Van Nostrand
Reinhold, New York, 1972.\end{thebibliography}
\end{document}